\DeclareSymbolFont{bbold}{U}{bbold}{m}{n}
\DeclareSymbolFontAlphabet{\mathbbold}{bbold}
\theoremstyle{theorem}
\newtheorem{theorem}{Theorem}[section]
\newtheorem{proposition}[theorem]{Proposition}
\newtheorem{lemma}[theorem]{Lemma}
\theoremstyle{definition}
\newtheorem{definition}[theorem]{Definition}
\newtheorem{example}[theorem]{Example}
\theoremstyle{remark}
\numberwithin{theorem}{section}
\numberwithin{equation}{section}
\DeclareMathOperator{\cHu}{\mathcal{H}}
\newcommand{\Jac}{\nabla}
\DeclareMathOperator{\codim}{codim}
\DeclareMathOperator{\diag}{diag}
\newcommand{\FP}{\textsc{fp}}
\newcommand{\DC}{\textsc{dc}}
\DeclareMathOperator{\EP}{\mathcal{EP}}
\DeclareMathOperator{\EV}{\mathcal{EV}}
\newcommand{\ii}{\mathrm{i}}
\newcommand{\wv}{\boldsymbol{w}}
\newcommand{\xv}{\boldsymbol{x}}  
\newcommand{\yv}{\boldsymbol{y}}  
\newcommand{\zv}{\boldsymbol{z}}
\newcommand{\pv}{\boldsymbol{p}}
\newcommand{\qv}{\boldsymbol{q}}
\newcommand{\nl}{\ell} 
\newcommand{\Lam}{\boldsymbol{\lambda}}
\newcommand{\tp }{{\scriptscriptstyle\mathsf{T}}}
\newcommand{\blue}[1]{#1}
\begin{document}

\title{Fiber product homotopy method for multiparameter eigenvalue problems}

\author[J.~Rodriguez]{Jose Israel Rodriguez}
\address{Department of Mathematics,
University of Wisconsin, Madison, WI}
\email[corresponding author]{jrodriguez43@wisc.edu}

\author[J.-H.~Du]{Jin-Hong Du}
\address{Department of Statistics,    University of Chicago, Chicago, IL}
\email{dujinhong@uchicago.edu}

\author[Y.~You]{Yiling You}
\address{Department of Mathematics, University of California, Berkeley, CA}
\email{yiling.you@berkeley.edu}

\author[L.-H.~Lim]{Lek-Heng Lim}
\address{Computational and Applied Mathematics Initiative,
University of Chicago, Chicago, IL}
\email{lekheng@uchicago.edu}

\keywords{
Multiparameter eigenvalue problem, homotopy continuation, fiber product, condition}

\subjclass[2010]{
65H20,
65H17,
65H10,
35P30}
\maketitle
\begin{abstract}
We develop a new homotopy method for solving multiparameter eigenvalue problems (MEPs) called the fiber product homotopy method.
\blue{For a $k$-parameter eigenvalue problem with matrices of sizes $n_1,\dots ,n_k = O(n)$, fiber product homotopy method requires deformation of $O(1)$ linear equations, while existing homotopy methods for MEPs require $O(n)$ nonlinear equations.}
We show that the fiber product homotopy method theoretically finds all eigenpairs of an MEP with probability one. It is especially well-suited for \blue{dimension-deficient} singular MEPs, a weakness of all other existing methods, as the fiber product homotopy method is provably convergent with probability one for \blue{such} problems as well, a fact borne out by numerical experiments.
More generally, our numerical experiments indicate that the fiber product homotopy method significantly outperforms the standard Delta method in terms of accuracy, with consistent backward errors on the order of $10^{-16}$, even for \blue{dimension-deficient} singular problems, and without any use of extended precision. In terms of speed, it significantly outperforms previous homotopy-based methods on all problems and outperforms the Delta method on larger problems, and is also highly parallelizable. We show that the \emph{fiber product MEP} that we solve in the fiber product homotopy method, although mathematically equivalent to a standard MEP, is typically a much better conditioned problem.
\end{abstract}

\section{Introduction}

A multiparameter eigenvalue problem (MEP) is, in an appropriate sense, a system of linear equations
\begin{equation}\label{eq:lin}
\begin{alignedat}{8}
a_{11} x_1  &+  a_{12} x_2  &&+ \dots &&+  a_{1k} x_k  &&= b_1, \\
a_{21} x_1  &+  a_{22} x_2  &&+ \dots &&+  a_{2k} x_k  &&= b_2,\\
\MTFlushSpaceAbove
&\vdotswithin{+} && \vdotswithin{+}&& \vdotswithin{+}&& \vdotswithin{=}
\MTFlushSpaceBelow
a_{k1} x_1  &+  a_{k2} x_2  &&+ \dots &&+  a_{kk} x_k  &&=b_k,
\end{alignedat}
\end{equation}
where the coefficients $a_{ij}$'s and  $b_i$'s are \emph{matrices}, and where equality is interpreted to mean on a point in a product of projective spaces (this will be made precise later). These coefficients are square matrices but are of different dimensions in general, so one may not usually regard \eqref{eq:lin} as a linear system over a matrix ring. There is a rich mathematical theory behind MEP \cite{Atkinson,Atkinson2011} that places it at the crossroad of linear and multilinear algebra, ordinary and partial differential equations, spectral theory and Sturm--Liouville theory, among other areas.  The problem appeared as early as 1836 in the works of Sturm and Liouville on periodic heat flow in a bar, and was studied over the years by many: Klein, Lam\'e, Heine, Stieltjes, Pell, Carmichael, Bocher, Hilbert among them (see \cite[Preface]{Atkinson} and \cite[Chapter~1]{Atkinson2011}).

An MEP encompasses many known types of eigenvalue problems: Standard eigenvalue problems $A\xv = \lambda \xv$; generalized eigenvalue problems $A\xv = \lambda B\xv$; quadratic eigenvalue problems $(\lambda^2 A+\lambda B+C)\xv = 0$; polynomial eigenvalue problems $(\lambda^mA_m+\lambda^{m-1}A_{m-1}+\cdots+A_0)\xv = 0$; quadratic two-parameter eigenvalue problems
\begin{align*}
(A_{00}+  \lambda A_{10}+\mu A_{01}+\lambda^{2}A_{20}+\lambda\mu A_{11}+\mu^{2}A_{02})\xv_1 &= 0,\\
(B_{00}+  \lambda B_{10}+\mu B_{01}+\lambda^{2}B_{20}+\lambda\mu B_{11}+\mu^{2}B_{02}) \xv_2 &= 0;
\end{align*}
may all be reduced to mathematically equivalent MEPs.

Nevertheless MEP remains in the blind spot of most modern mathematicians, whether pure or applied. This is not for its lack of applications; as we pointed out, the problem in fact originated from a study of heat flow, and we will see yet other applications of MEP in Section~\ref{sec:implement} and that it contains eigenvalue problem and linear system, both ubiquitous in science and engineering, as special cases. We think that a main reason for the  obscurity of MEPs is that there are not many effective  methods for its computation and there is thus little to be gained from formulating a problem as an MEP. It is with this in mind that we propose a new homotopy method based on what we call \emph{fiber product homotopy} for computing MEP solutions.

We will now formally define an MEP in more conventional notations. Instead of having a single eigenvalue parameter $\lambda$, an MEP has multiple eigenvalue parameters $\Lam=(\lambda_{1},\dots,\lambda_{k})$. We will call
\[
H(\Lam) \coloneqq A_{0}-\lambda_{1}A_{1}-\dots-\lambda_{k}A_{k},
\]
a \emph{linear polynomial matrix} in $k$ parameters  $\lambda_{1},\dots,\lambda_{k}$ with matrix coefficients $A_0,\dots, A_k \in \mathbb{C}^{n \times n}$. We will write $\mathbb{P}^n$ for the complex projective $n$-space.
\begin{definition}\label{definition:MEP}
For a fixed $k \ge 2$ and given matrices $A_{ij}\in\mathbb{C}^{n_{i}\times n_{i}}$
with $j=0,1,\dots,k$,
$i=1,\dots,k$, consider the linear polynomial matrices
\[
H_{i}(\Lam)\coloneqq A_{i0}-\lambda_{1}A_{i1}-\lambda_{2}A_{i2}-\dots-\lambda_{k}A_{ik}.
\]
The \emph{multiparameter eigenvalue problem} (MEP), or, more precisely, a \emph{$k$-parameter eigenvalue problem}, is to find  $\lambda_{1},\dots,\lambda_{k} \in \mathbb{C}$
 and corresponding  $(\xv_{1},\dots,\xv_{k})\in\mathbb{P}^{n_{1}-1}\times\dots\times\mathbb{P}^{n_{k}-1}$
such that
\begin{equation}\label{eq:mep1}
H_{i}(\Lam)\xv_{i}=0, \qquad i=1,\dots,k.
\end{equation}
A solution $ (\lambda_{1},\dots,\lambda_{k},\xv_{1},\dots,\xv_{k}) \in \mathbb{C}^k \times \mathbb{P}^{n_{1}-1}\times\dots\times\mathbb{P}^{n_{k}-1}$ to the MEP is called an \emph{eigenpair},  the $k$-tuple
$(\xv_1,  \dots, \xv_k) \in  \mathbb{P}^{n_{1}-1}\times\dots\times\mathbb{P}^{n_{k}-1}$ an \emph{eigenvector},
 and the $k$-tuple $\Lam=(\lambda_{1},\dots,\lambda_{k}) \in \mathbb{C}^k$ an \emph{eigenvalue}.
 \end{definition}
Written out in full, \eqref{eq:mep1} takes the form
\begin{equation}\label{eq:mep2}
\begin{alignedat}{8}
(\lambda_1 A_{11}  &+  \lambda_2 A_{12}  &&+ \dots &&+ \lambda_k A_{1k} ) \xv_1  &&= A_{10} \xv_1, \\
(\lambda_1 A_{21}  &+  \lambda_2 A_{22}  &&+ \dots &&+ \lambda_k A_{2k} )\xv_2  &&= A_{20} \xv_2, \\
\MTFlushSpaceAbove
&\vdotswithin{+} && \vdotswithin{+}&& \vdotswithin{+}&& \vdotswithin{=}
\MTFlushSpaceBelow
(\lambda_1 A_{k1}  &+ \lambda_2 A_{k2}  &&+ \dots &&+ \lambda_k A_{kk}) \xv_k  &&=A_{k0} \xv_k.
\end{alignedat}
\end{equation}
With $\lambda_i$'s playing the role of $x_i$'s, $A_{ij}$'s and $A_{i0}$'s playing the roles of $a_{ij}$'s and $b_i$'s respectively  in \eqref{eq:lin}, and interpreting equality of the $i$th equation in \eqref{eq:lin}  to mean  equality on some $\xv_i \in \mathbb{P}^{n_i-1}$, we may view \eqref{eq:mep2} as an analogue of a linear system that we referred to at the beginning. The analogy is precise when $n_1 = \dots= n_k = 1$ --- \eqref{eq:mep2} is a linear system in the usual sense.

When $k = 1$, \eqref{eq:mep2} is a generalized eigenvalue problem. More generally, if $A_{ij} = 0$ for all $i \ne j$ \blue{and $j\neq 0$}, then \eqref{eq:mep2} is decoupled into $k$ generalized eigenvalue problems. Hence \eqref{eq:mep2} contains both eigenvalue problems and linear systems as special cases.
The multiparameter eigenvalue problem is well studied and readers may  refer to the books
\cite{Atkinson2011,Atkinson,Volkmer1988} for a comprehensive treatment.

Since any scalar multiple of $\xv_i$ is also an eigenvector  it is fitting to consider $\xv_i$ as an element of the projective space $\mathbb{P}^{n_i-1}$ although for practical reason one might prefer to simply normalize $\xv_i$ to have unit norm.

We develop a new homotopy method to solve a multiparameter eigenvalue problem effectively, where effectiveness is measured by the following factors:
\begin{itemize}
\item \emph{Speed} as measured by wall time.
We record time per path, maximum time over all paths, and total track time of all paths.
Our algorithm is highly parallelizable and the per-path times give good speed estimates when there are enough cores to track all paths in parallel.

\item  \emph{Accuracy} as measured by the backward error. We use the normwise backward error in   \cite[Theorem~2]{HP2003} for an approximate eigenpair. Our homotopy method tracks several copies of the eigenvalue $\Lam$; they should all converge to the same value if our method performs correctly and we include the difference between copies of $\Lam$'s as another measure of accuracy.

\item \emph{Certificates} of quadratic convergence in terms of Shub--Smale $\alpha$-theory.

\item  \emph{Number of divergent paths} that fail to converge to the solutions.
\end{itemize}
The last two measures only apply to  methods based on homotopy continuation.
We will compare our method to two existing methods:
\begin{enumerate}[\upshape (i)]
\item The \emph{Delta method} \cite{Atkinson}, which is the de facto standard method for solving MEPs by transforming them into a coupled system of generalized eigenvalue problems; we use the \textsc{MultiParEig} package \cite{MP2010} in our experiments with this method. \blue{For singular MEPs, we perform Delta method after extracting the common regular parts of the Delta matrices with a staircase algorithm.}
\item The \emph{diagonal coefficient homotopy method} recently proposed in \cite{Dong2016} for solving MEPs, where the start system is a random choice of diagonal matrices and the homotopy is a straight-line homotopy that deforms  \blue{$n_{1}+\cdots+n_{k}$} of \blue{$n_1+\cdots+ n_k+k$}
equations.
\end{enumerate}
The numerical experiments in  \cite{Dong2016}  show that the diagonal coefficient  homotopy method outperforms  the Delta method in terms of memory usage
and  is also faster for large $n_{1},\dots,n_{k}$.
Both methods find all eigenpairs of an MEP.

Our \emph{fiber product homotopy method} adopts an alternative approach --- we solve an MEP \eqref{eq:mep1} by solving a mathematically equivalent system that we will call the \emph{fiber product multiparameter eigenvalue problem}:\footnote{More precisely, we solve $H_{i}(\Lam_{i})\xv_{i}=0$, $G_1 = \dots = G_k = 0$, $i=1,\dots,k$, where each $G_i \colon \mathbb{C}^{k^2} \to \mathbb{C}^{k-1}$ is a linear function of $\Lam_{1},\dots,\Lam_{k}$, chosen so that the resulting system is equivalent to \eqref{eq:equivMEP}. See Section~\ref{sec:target}.}
\begin{equation}\label{eq:equivMEP}
H_{i}(\Lam_{i})\xv_{i}=0,\quad\Lam_{1}=\Lam_{2}=\dots=\Lam_{k},\qquad i=1,\dots,k,
\end{equation}
where $\Lam_{1},\dots,\Lam_{k}$ are to be regarded as different copies of $\Lam$.
Our corresponding homotopy
has a start system that
captures more structure of the MEP and allows us to deform far fewer equations.
By introducing $k(k-1)$ auxiliary unknowns, we deform at most $k(k-1)$
of the \blue{$n_1+\cdots+n_k+k^2$} equations.
\blue{For a fixed $k$ and $n_{1},\dots, n_{k} = O(n)$, fiber product homotopy deforms $O(1)$ equations whereas diagonal coefficient homotopy deforms $O(n)$ equations. Furthermore, fiber product homotopy deforms only \emph{linear} equations whereas diagonal coefficient homotopy deforms \emph{nonlinear} equations, which means that the paths of the fiber product homotopy are easier to track.
As we will see later, for \blue{dimension-deficient and} singular MEPs, the number of paths to track in a fiber product homotopy can be substantially smaller than using a diagonal coefficient homotopy.}

An eigenpair $(\Lam,\xv_1,\dots,\xv_k)$
of an MEP is said to be \emph{regular} if the eigenvalue $\Lam$ is isolated and has multiplicity one (see \cite{HP2003} for definitions of algebraic and geometric multiplicity).
Since the expected number of regular eigenpairs to an MEP is $n_{1}\cdots n_{k}$,
one often considers only relatively small values of $k$ to $n$ when finding all eigenpairs. Our fiber product homotopy method is guaranteed to compute all regular eigenpairs  in theory --- we show in  Theorem~\ref{thm:chosenCorrectly} that our start system in Section~\ref{sec:fibprodhom} is chosen correctly with probability one.

The fiber product homotopy method is motivated by several geometric insights. In Section~\ref{sec:eigen}, we define two algebraic varieties associated with a fiber product MEP: multiparameter eigenvalue variety and multiparameter eigenpair variety. \emph{Fiber products}, a notion well-known in areas from algebraic geometry to relational database, will be reviewed in Section~\ref{sec:fibprod}. The name for our method comes from the fact that a $k$-parameter eigenpair variety is a fiber product of $k$ one-parameter eigenpair varieties.  In Section~\ref{sec:condition}, we rely on geometry to define a condition number for the fiber product MEP \eqref{eq:equivMEP}, which differs from the condition number for the standard MEP \eqref{eq:mep2}, these being two different problems, albeit having the same solutions. Our condition number arises from intersecting an algebraic variety (our multiparameter eigenpair variety) with a varying linear space (defined by our start system).

We implemented a purely numerical version (in particular, it does not use multiprecision) of fiber product homotopy in \textsc{Matlab} and a mixed symbolic-numerical version in \textsc{Bertini} with \textsc{Macaulay2} for comparison. We did extensive numerical experiments with both implementations: randomly generated MEPs in Section~\ref{sec:rand};  the Mathieu two-parameter eigenvalue problem arising from a real-world application --- an elliptic membrane vibration problem --- in Section~\ref{sec:mathieu};  and \blue{MEPs that are both dimension-deficient and} singular, a challenging class of \blue{problems} with a deficiency in the number of eigenpairs and that breaks most other methods, in Section~\ref{sec:sing}.

We applied a broad range of measures for speed and accuracy to our numerical experiments to stress test the robustness of our method. In Section~\ref{sec:rand}, speed is measured via both wall time and iteration count (number of Newton steps); accuracy is measured via both relative backward error of the computed eigenpairs and the deviation in the multiple paths used to track the eigenvalues. In Section~\ref{sec:mathieu}, we test the effect of reducing the number of Newton steps (by early stopping) on the accuracy of our method and certify its final quadratic convergence speed using Shub--Smale $\alpha$-theory. For the \blue{dimension-deficient} singular MEPs in Section~\ref{sec:sing}, what breaks other homotopy methods is the issue of divergent paths and so we use the number of divergent paths as a measure of effectiveness. The take-away is that our method saw \emph{zero} divergent path in every case  we tested. In Sections~\ref{sec:rand} and \ref{sec:sing}, we also provide the time it takes to track a single path, which gives a good estimate of the speed under parallel execution of our method (since each path can be tracked independently of~others).

\section{Homotopy methods}\label{sec:homotopy}

We recall the straight-line homotopy and describe the diagonal coefficient homotopy method used in \cite{Dong2016} but will defer the description of our fiber product homotopy method to Section~\ref{sec:fibprodhom}.

A homotopy deforms solutions of a
\emph{start system} $\boldsymbol{Q}(\zv)=0$
to solutions of a
\emph{target system} $\boldsymbol{P}(\zv)=0$.
More precisely,
a \emph{straight-line homotopy}
with {path parameter} $t$ is defined as
\begin{equation}\label{eq:H}
\boldsymbol{H}(\zv,t)\coloneqq (1-t)\boldsymbol{Q}(\zv)+ t\boldsymbol{P}(\zv), \quad t\in[0,1].
\end{equation}
When $t=0$, $\boldsymbol{H}(\zv,0)=\boldsymbol{Q}(\zv)=0$ is the start system and when $t=1$, $\boldsymbol{H}(\zv,1)=\boldsymbol{P}(\zv)=0$ is the target system.

\begin{definition}\label{def:correctly}
A start system for the homotopy $\eqref{eq:H}$
is said to be \emph{chosen correctly}  \cite{li_1997} if the
 following properties hold:
\begin{enumerate}[\upshape (i)]
\item the solution set of the start system $\boldsymbol{Q}(\zv)=0$ are known or easy to obtain;
\item\label{item:wcSmooth} the solution set of $\boldsymbol{H}(\zv,t)=0$ for $0 \leq t < 1$ consists of a finite number of smooth paths, each parametrized by $t$ in $[0,1)$;
\item for each isolated solution of the target system $\boldsymbol{P}(\zv)=0$, there is some path originating from a solution of the start system that leads to it.
\end{enumerate}
\end{definition}

Let $D_{ij}$ denote diagonal matrices in $\mathbb{C}^{n_i\times n_i}$.
The \emph{diagonal coefficient homotopy method} for solving MEP
is the straight-line homotopy given by:
\begin{equation}\label{eq:diagPQ}
\begin{aligned}
\boldsymbol{H}_{\DC}(\Lam,\xv_1,\dots,\xv_k,t)
&\coloneqq (1-t)\boldsymbol{Q}_{\DC}+ t\boldsymbol{P}_{\DC},\\
\boldsymbol{Q}_{\DC}(\Lam,\xv_{1},\dots,\xv_{k})
&\coloneqq \begin{bmatrix}
(D_{10}+\lambda_1D_{11})\xv_1 \\
\vdots\\
(D_{k0}+\lambda_k D_{kk})\xv_k
\end{bmatrix},
 \\
\boldsymbol{P}_{\DC}(\Lam,\xv_{1},\dots,\xv_{k})
&\coloneqq \begin{bmatrix}
H_{1}(\Lam)\xv_{1} \\
\vdots\\
H_{k}(\Lam)\xv_{k} \\
\end{bmatrix}
\end{aligned}
\end{equation}
where one regards
\begin{align*}
\boldsymbol{H}_{\DC} \colon \mathbb{C}^k\times(\mathbb{P}^{n_1-1}\times\dots\times\mathbb{P}^{n_k-1})\times\mathbb{C} & \to\mathbb{C}^{n_1+\dots +n_k},\\
\boldsymbol{Q}_{\DC}, \; \boldsymbol{P}_{\DC} \colon \mathbb{C}^k\times(\mathbb{P}^{n_1-1}\times\dots\times\mathbb{P}^{n_k-1})&\to\mathbb{C}^{n_1+\dots +n_k}
\end{align*}
as polynomial maps.
The homotopy method proposed in \cite{Dong2016} is an example of a diagonal coefficient homotopy method.

To implement the homotopy above, one has to account for the scaling of the eigenvectors by introducing $k$ constraints.
One way to do this is by scaling each $\xv_i$ so that it has norm one.
This is the approach undertaken in \cite{Dong2016}.
Another way to do this is to place a generic\footnote{Here ``generic'' is used in its usual sense in algebraic geometry. Those unfamiliar with this notion may assume that it is synonymous with ``random.''} affine constraint on each $\xv_i$, which is what we will do in our fiber product homotopy in Section~\ref{sec:fibprodhom}\blue{.}

\section{Multiparameter eigenvarieties}\label{sec:eigen}

We will define two algebraic varieties associated with an MEP.
\begin{definition}\label{def:epvar}
In algebraic geometric terms \cite{Hartshorne}, the coordinates of  $H(\Lam)\xv$  are polynomials that form a subset of $\mathbb{C}[\Lam,\xv] =\mathbb{C}[\lambda_1,\dots,\lambda_k,x_1,\dots,x_n]$ and define an algebraic variety
\[
\EP(H)\coloneqq \{
(\Lam,\xv) \in \mathbb{C}^k\times\mathbb{P}^{n-1} :  H(\Lam)\xv=0 \}.
\]
We will call this the \emph{eigenpair variety} of $H$.
In the context of an MEP, we will call
the Cartesian product $\EP(H_1,\dots,H_k)\coloneqq \EP(H_1)\times\dots\times \EP(H_k)$
the
\emph{multiparameter eigenpair variety} of $H_1,\dots,H_k$. Explicitly,
\begin{multline}\label{eq:defZ}
\EP(H_1,\dots,H_k)=
\{
(\Lam_1,\dots\Lam_k,\xv_1,\dots,\xv_k) \in \mathbb{C}^{k^2} \times \mathbb{P}^{n_1-1}\times\dots\times\mathbb{P}^{n_k-1} : \\
H_1(\Lam_1)\xv_1=0,\dots,H_k(\Lam_k)\xv_k=0 \}.
\end{multline}
The \emph{multiparameter eigenvalue variety} of $H_1,\dots,H_k$ is the coordinatewise projection of the multiparameter eigenpair variety to $\mathbb{C}^{k^2}$ and will be denoted by $\EV\bigl(H_1,\dots,H_k\bigr)$. Alternatively, it can be defined explicitly as
\begin{multline}\label{eq:EVV}
\EV(H_1,\dots,H_k)=
\{ (\Lam_1,\dots\Lam_k) \in \mathbb{C}^{k^2}  :
\det H_1(\Lam_1)=0,\dots,\det H_k(\Lam_k)=0\}.
\end{multline}
\end{definition}

\blue{Througout our article, $\mathbb{N}$ will denote the set of \emph{positive} integers.}
\begin{definition}\label{definition:intrinsicDimension}
An MEP is said to have \emph{intrinsic dimension} \blue{$(d_1,\dots,d_k)\in\mathbb{N}^k$}
if the \blue{total} degree of the polynomial $\det H_i(\Lam_i)$ is $d_i$, $i=1,\dots,k$.
Such an MEP is said to be \emph{generic with respect to intrinsic dimension} if
the hypersurface defined by $\det H_i(\Lam_i)$ is generically reduced\footnote{This is an algebraic geometry term that implies the polynomial $\det H_i(\Lam_i)$
 is square-free.} for $i=1,\dots,k$.
\end{definition}

\blue{As $d_i$'s are required to be positive, none of the $\det H_i(\Lam_i)$'s are the zero polynomial.}
By Bezout's theorem, the degree of the multiparameter eigenvalue variety $\EV(H_1,\dots,H_k)\subseteq \mathbb{C}^{k^2}$ is at most $\prod_{i=1}^k d_i$.
So  the number of isolated regular points in the intersection of $\EV(H_1,\dots,H_k)$ with a codimension-$k(k-1)$
affine linear space in $\mathbb{C}^{k^2}$
is at most $\prod_{i=1}^k d_i$.  We state this formally below.
\begin{proposition}\label{prop:N}
An MEP with intrinsic dimension $(d_1,\dots,d_k)$ has at
 most  $\prod_{i=1}^kd_i$ regular eigenvalues and eigenpairs.
This bound is tight if the MEP is generic with respect to intrinsic dimension.
 \end{proposition}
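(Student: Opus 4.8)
The plan is to derive the statement from B\'ezout's theorem applied to the $k$ hypersurfaces $\{\det H_i(\Lam)=0\}\subseteq\mathbb{C}^k$, $i=1,\dots,k$, and then to pass from counting eigenvalues to counting eigenpairs. By Definition~\ref{definition:MEP}, an eigenvalue of the MEP is exactly a point $\Lam\in\mathbb{C}^k$ lying on all $k$ of these hypersurfaces --- equivalently, identifying $\mathbb{C}^k$ with the diagonal $\{\Lam_1=\dots=\Lam_k\}\subseteq\mathbb{C}^{k^2}$, it is a point of $\EV(H_1,\dots,H_k)$ on that $k$-dimensional linear space, which is the picture drawn just before the proposition. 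After the harmless relabelling of $\Lam_i$ as $\Lam$, Definition~\ref{definition:intrinsicDimension} gives $\deg\det H_i(\Lam)=d_i$, so homogenising produces $k$ hypersurfaces of degrees $d_1,\dots,d_k$ in $\mathbb{P}^k$.

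For the upper bound I would invoke the refined B\'ezout theorem: the isolated points of $\bigcap_{i=1}^k\{\det H_i=0\}\subseteq\mathbb{P}^k$, counted with local intersection multiplicity, number at most $\prod_{i=1}^k d_i$, and this remains valid even when the intersection carries positive-dimensional components, since those too contribute positively to the B\'ezout number. A regular eigenvalue is by definition isolated and of multiplicity one, hence contributes at least one to this count, so there are at most $\prod_{i=1}^k d_i$ regular eigenvalues. To pass to eigenpairs, recall that by the definition of a regular eigenpair its eigenvalue $\Lam$ has multiplicity one, which forces $\dim\ker H_i(\Lam)=1$ for every $i$ \cite{HP2003}; hence each eigenvector component $\xv_i\in\mathbb{P}^{n_i-1}$ is uniquely determined, so regular eigenpairs are in bijection with regular eigenvalues and obey the same bound.

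For tightness I would exhibit an MEP of intrinsic dimension $(d_1,\dots,d_k)$ that attains the bound, and then observe that attainment is a nonempty Zariski-open condition on the coefficients within the stratum of MEPs of intrinsic dimension $(d_1,\dots,d_k)$, so a generic MEP of that intrinsic dimension --- in particular one that is generic with respect to intrinsic dimension --- attains it too. A convenient witness is $H_i(\Lam)=\diag\bigl(H_i'(\Lam),\,C_i\bigr)$ with $H_i'$ a generic $d_i\times d_i$ linear polynomial matrix and $C_i\in\mathbb{C}^{(n_i-d_i)\times(n_i-d_i)}$ a fixed invertible constant matrix, so that $\det H_i(\Lam)=\det C_i\cdot\det H_i'(\Lam)$ has degree exactly $d_i$ and is square-free for generic $H_i'$: this MEP is equivalent to the MEP of sizes $d_1,\dots,d_k$ given by $(H_1',\dots,H_k')$, whose eigenvectors embed into $\mathbb{P}^{n_i-1}$ by zero-padding and whose regularity is preserved, and a generic MEP of sizes $d_1,\dots,d_k$ classically \cite{Atkinson} has exactly $\prod_{i=1}^k d_i$ regular eigenpairs.

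The main obstacle is the tightness half: one must verify that for a generic member of the intrinsic-dimension stratum the conclusion of B\'ezout's theorem genuinely holds --- all intersection points simple, none escaping to infinity, and each with a one-dimensional eigenvector space --- even though $\det H_i$ is a structured determinantal polynomial rather than an arbitrary polynomial of degree $d_i$, and the semicontinuity argument that propagates the explicit witness to the generic MEP is where the care is needed. The upper bound, by contrast, is immediate once the degree count and the definition of a regular eigenpair are combined with B\'ezout's inequality.
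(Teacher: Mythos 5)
Your proposal takes essentially the same route as the paper: B\'ezout applied to the $k$ determinantal hypersurfaces of degrees $d_1,\dots,d_k$, with the count of regular eigenvalues bounded by the B\'ezout number $\prod_i d_i$. The paper phrases this in $\mathbb{C}^{k^2}$ --- the degree of $\EV(H_1,\dots,H_k)$ cut by the codimension-$k(k-1)$ diagonal linear space --- while you restrict to the diagonal first and run B\'ezout on $k$ hypersurfaces in $\mathbb{P}^k$; these are the same argument in different coordinates. What you add is useful: you make explicit the passage from eigenvalues to eigenpairs (each multiplicity-one eigenvalue forces $\dim\ker H_i(\Lam)=1$, so the $\xv_i$ are determined), and you actually argue the tightness claim via a block-diagonal witness plus a Zariski-openness argument, neither of which the paper spells out. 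Your flagged concern about the tightness half is legitimate and applies equally to the paper's statement as literally worded: Definition~\ref{definition:intrinsicDimension} only requires each $\det H_i$ to be square-free, which by itself does not preclude tangential intersections or components at infinity among the $k$ hypersurfaces, so your semicontinuity argument actually establishes tightness for a generic member of the intrinsic-dimension stratum rather than for every square-free instance --- that is almost certainly the intended reading, but it is worth noting that the proposition's hypothesis, taken at face value, is weaker than what the argument uses.
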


The next  observation will be a key to our fiber product homotopy method.
\begin{lemma}\label{lemma:correspond}
Let $\mathbb{D}$ denote the linear space defined by
\[
\mathbb{D} \coloneqq \{ (\Lam_1,\dots\Lam_k,\xv_1,\dots,\xv_k) \in \mathbb{C}^{k^2} \times \mathbb{P}^{n_1-1}\times\dots\times\mathbb{P}^{n_k-1} :  \Lam_1=\dots=\Lam_k\}.
\]
Then the intersection $\mathbb{D} \cap \EP(H_1,\dots,H_k)$ gives the set of eigenpairs of \eqref{eq:mep1}.
\end{lemma}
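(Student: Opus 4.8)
The plan is to simply unwind the two defining conditions and exhibit an explicit bijection. First I would recall from \eqref{eq:defZ} that a point of $\EP(H_1,\dots,H_k)$ is a tuple $(\Lam_1,\dots,\Lam_k,\xv_1,\dots,\xv_k)$ with $H_i(\Lam_i)\xv_i=0$ for each $i$, and that lying in $\mathbb{D}$ forces $\Lam_1=\dots=\Lam_k$; write $\Lam$ for this common value. Substituting, the conditions defining the intersection become $H_i(\Lam)\xv_i=0$ for $i=1,\dots,k$, which is precisely the MEP \eqref{eq:mep1}. Hence the map
\[
(\Lam,\dots,\Lam,\xv_1,\dots,\xv_k)\longmapsto(\Lam,\xv_1,\dots,\xv_k)
\]
sends $\mathbb{D}\cap\EP(H_1,\dots,H_k)$ into the set of eigenpairs of the MEP.

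Next I would verify this map is a bijection. It is the restriction to $\mathbb{D}$ of the coordinate projection $\mathbb{C}^{k^2}\times\mathbb{P}^{n_1-1}\times\dots\times\mathbb{P}^{n_k-1}\to\mathbb{C}^{k}\times\mathbb{P}^{n_1-1}\times\dots\times\mathbb{P}^{n_k-1}$ that retains only the first $\Lam$-block; since on $\mathbb{D}$ all $k$ copies of $\Lam$ coincide, this projection is injective on $\mathbb{D}$, with inverse the diagonal embedding $(\Lam,\xv_1,\dots,\xv_k)\mapsto(\Lam,\dots,\Lam,\xv_1,\dots,\xv_k)$. The diagonal embedding manifestly carries any eigenpair of \eqref{eq:mep1} back into $\mathbb{D}\cap\EP(H_1,\dots,H_k)$, so the correspondence is onto as well. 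Both maps are morphisms — linear in the $\Lam$-coordinates, the identity on the $\xv_i$'s — so this is in fact an isomorphism of (quasi-projective) varieties, not merely a set bijection; that stronger form is what we will invoke later when tracking homotopy paths and matching solution counts.

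There is essentially no hard step here. The only point that warrants a word of care is bookkeeping of ambient spaces: eigenpairs of \eqref{eq:mep1} live in $\mathbb{C}^{k}\times\mathbb{P}^{n_1-1}\times\dots\times\mathbb{P}^{n_k-1}$ while $\mathbb{D}$ sits inside $\mathbb{C}^{k^2}\times\mathbb{P}^{n_1-1}\times\dots\times\mathbb{P}^{n_k-1}$, so ``corresponds to'' must be read through the canonical linear isomorphism $\mathbb{D}\cong\mathbb{C}^{k}\times\mathbb{P}^{n_1-1}\times\dots\times\mathbb{P}^{n_k-1}$ described above rather than as literal equality of subsets. Once that identification is fixed, the statement follows immediately from the definitions of $\EP(H_1,\dots,H_k)$ and $\mathbb{D}$.
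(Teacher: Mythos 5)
Your proof is correct, and it is essentially the only natural argument: the paper itself presents this lemma as a "simple observation" without a written proof, taking exactly your unwinding-of-definitions reading for granted. Your added care about the word "corresponds"—making explicit that the identification is via the linear isomorphism $\mathbb{D}\cong\mathbb{C}^k\times\mathbb{P}^{n_1-1}\times\dots\times\mathbb{P}^{n_k-1}$ given by collapsing the $k$ equal $\Lam$-blocks to one, and that this is an isomorphism of varieties rather than just a set bijection—is a worthwhile clarification that the paper leaves implicit.
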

In fact, $\mathbb{D}\cap\EP(H_1,\dots,H_k)$ is the \emph{fiber product} of $\EP(H_1),\dots,\EP(H_k)$ over $\mathbb{C}^k$, a standard notion in algebraic geometry \cite{Hartshorne}. This is the impetus for the name of our homotopy method --- fiber product homotopy.

\section{Fiber products}\label{sec:fibprod}

Knowledge of the fiber product's formal properties at the level of, say, \cite{Hartshorne} is unnecessary for us. All we need is the notion of  fiber product of sets --- an important and well-known concept in relational database theory \cite[Section~6.3]{CMCS1}.

Let $X, Y, A$ be sets and $\varphi \colon X  \to A$ and $\psi \colon Y \to A$ be maps.
\[
\begin{tikzpicture}
  \matrix (m) [matrix of math nodes,row sep=3em,column sep=4em,minimum width=2em]
{      & Y \\
     X & A \\ };
  \path[-stealth]
    (m-2-1.east|-m-2-2) edge node [below] {$\varphi$}
            node [above] {} (m-2-2)
    (m-1-2) edge node [right] {$\psi$} (m-2-2);
\end{tikzpicture}
\]
The \emph{fiber product}
$X\times_A Y$ of  $X$ and $Y$ over $A$
is the subset of
$X\times Y$ given by
\[
X \times_A Y \coloneqq \{
(x,y) \in X \times Y \colon
\varphi(x)=\psi(y)
\}.
\]
\blue{Note that a fiber product depends on the maps $\varphi$ and $\psi$ although this is not reflected in the notation $X \times_A Y$, which is nevertheless standard.}
The fiber product satisfies the following commutative diagram where $\pi_1(x,y)=x$ and $\pi_2(x,y)=y$ are projection maps:
\[
\begin{tikzpicture}
  \matrix (m) [matrix of math nodes,row sep=3em,column sep=4em,minimum width=2em]
  {
     X \times_A Y & Y \\
     X & A\\ };
  \path[-stealth]
    (m-1-1) edge node [left] {$\pi_1$} (m-2-1)
            edge node [below] {$\pi_2$} (m-1-2)
    (m-2-1.east|-m-2-2) edge node [below] {$\varphi$}
            node [above] {} (m-2-2)
    (m-1-2) edge node [right] {$\psi$} (m-2-2);
\end{tikzpicture}
\]
We will illustrate fiber products with a few examples.

\begin{example}[Relational database]
Let
 $X = \{1,2,3,4\}$,
 $Y = \{a,b,c,d,e\}$, and $A=\{-1,+1\}$.
Let the maps  $\varphi \colon X \to A$ and $\psi \colon Y \to A$ be given by
\begin{equation*}
\varphi(x) =
     \begin{cases}
       +1 &\text{$x$ is odd},\\
       -1 &\text{$x$ is even},
     \end{cases}
\qquad
\psi(y) =
     \begin{cases}
       +1 &\text{$y$ is a vowel},\\
       -1 &\text{$y$ is a consonant}.
     \end{cases}
\end{equation*}
Then the fiber product of $X$ and $Y$ over $A$ is
\[
X \times_A Y = \{
(1,a),
(1,e),
(2,b),
(2,c),
(2,d),
(3,a),
(3,e),
(4,b),
(4,c),
(4,d)\}.
\]
Incidentally, this toy example underlies the \texttt{JOIN} operation in the structured query language (\textsc{sql}) of a relational database management system (\textsc{rdbms}). See  \cite[Section~6.3]{CMCS1} for more information.
\end{example}

\begin{example}[Algebraic geometry]
Consider the following cubic curves in $\mathbb{R}^2$:
 \begin{align*}
 X &= \{(t_1,z_1)\in \mathbb{R}^2  : t_1=z_1(z_1-1)(z_1-2)+1\},\\
 Y &= \{(t_2,z_2)\in \mathbb{R}^2 : t_2=z_2(z_2-1)(z_2-2)+1\}.
\end{align*}
Let $A=\mathbb{R}$ and consider the maps
\[
\varphi \colon X \to A,\quad
 \varphi(t_1,z_1) = t_1 \qquad\text{and}\qquad
\psi \colon Y \to A,\quad
 \psi(t_2,z_2) = t_2.
\]
Their fiber product,
\[
X \times_A Y = \{
(t_1,t_2,z_1,z_2)\in\mathbb{R}^4 :
(t_1,z_1)\in X,\;
(t_2,z_2)\in Y,\; t_1=t_2
\},
\]
is shown in Figure~\ref{fig:fpExampleCubic}.
While the Cartesian product $X\times Y$ is an irreducible surface,
the fiber product $X \times_A Y$ is a union of two curves ---
a point $(t_1,t_2,z_1,z_2)\in X \times_A Y$ satisfies
\[
z_1=z_2\qquad  \text{or}  \qquad
z_1^2+z_1z_2+z_2^2=3z_1+3z_2-2.\]
One of the curves projects onto a line and the other projects onto an ellipse.
Whereas the Cartesian product of two irreducible curves is always an irreducible surface,
this example shows that the fiber product of two irreducible curves does not need to be irreducible.

\begin{figure}[htb!]
\centering
\caption{$X \times_A Y$ and $X\times Y$ are subsets of $\mathbb{R}^4$, but we only plot the
coordinates $(z_1,z_2,t_1)$.
\emph{Left:} The fiber product $X \times_A Y$ is the union of the blue curve and the magenta curve.
The dotted blue line and dotted magenta ellipse are the $(z_1,z_2)$-coordinate projections of the corresponding curve onto the $t_1=0$ plane.
\emph{Right:} The $(z_1,z_2,t_1)$-coordinate projection of the Cartesian product $X\times Y$.
}
\includegraphics[trim={21ex, 13ex, 12ex, 13ex}, clip, width=0.49\textwidth]{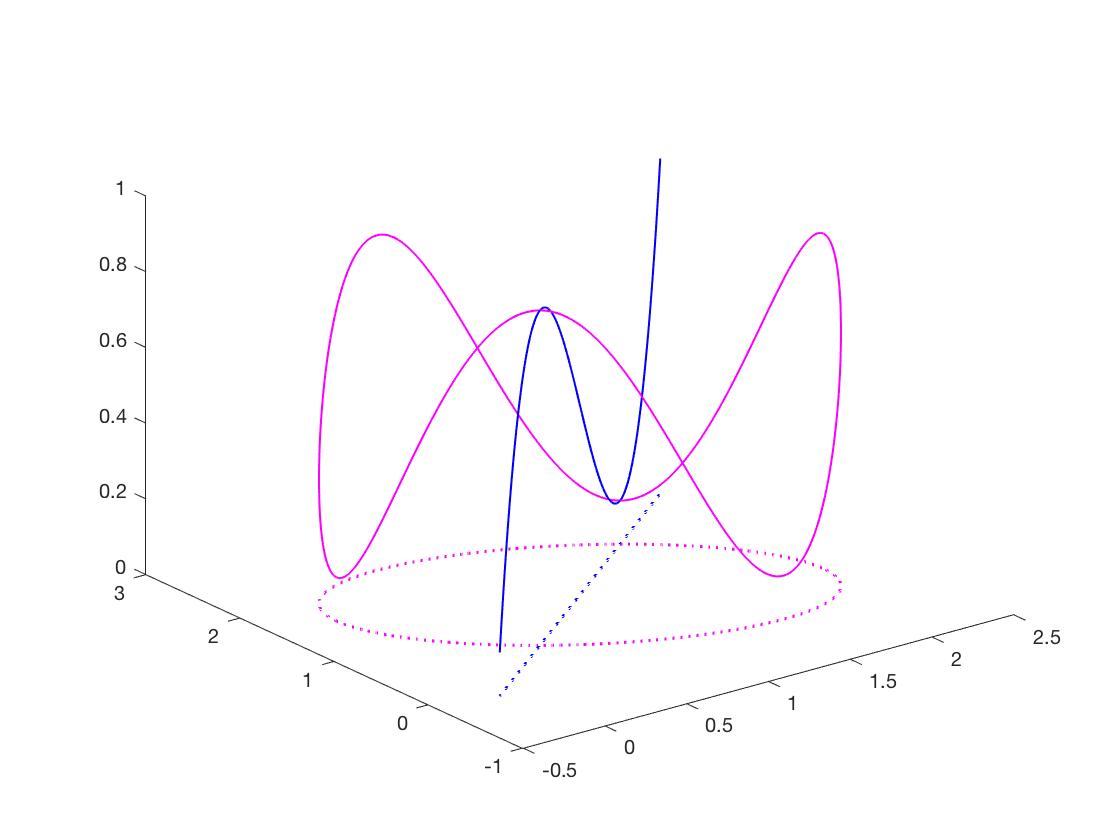}
\includegraphics[trim={21ex, 13ex, 12ex, 13ex}, clip, width=0.49\textwidth]{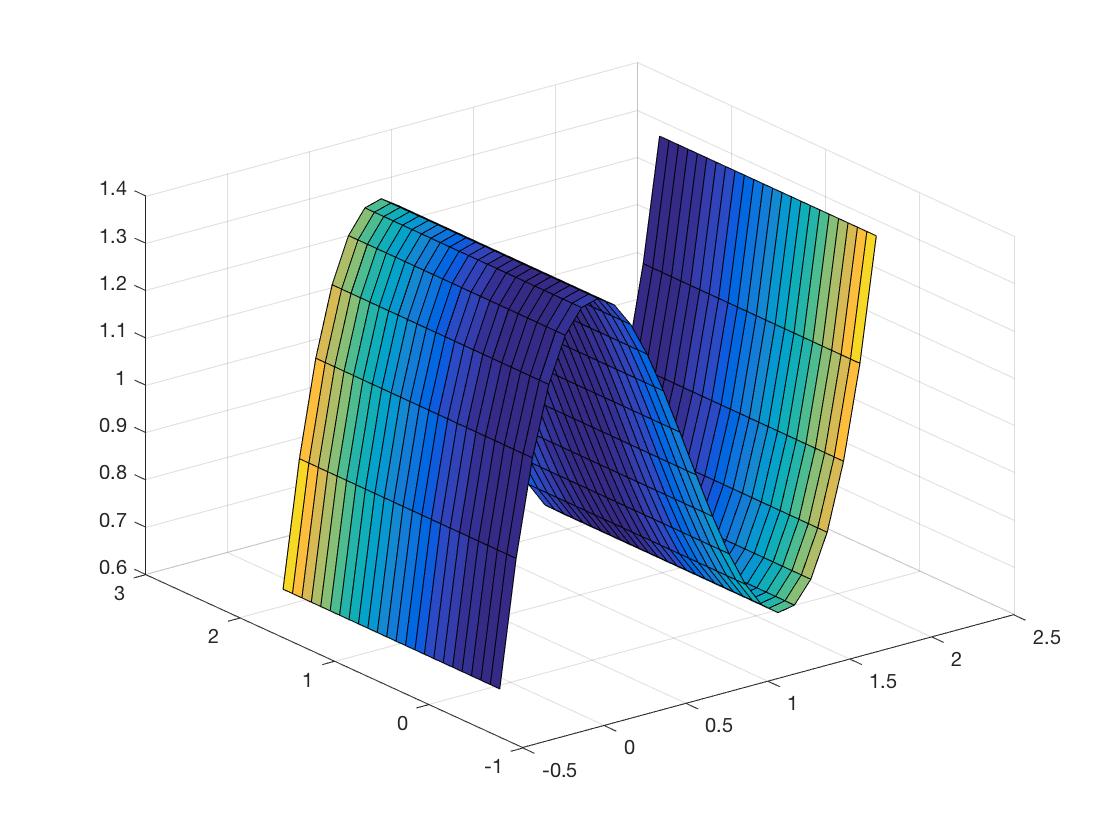}
\label{fig:fpExampleCubic}
\end{figure}

\end{example}

\begin{example}[Multiparameter eigenvalue problem]
Consider a two-parameter eigenvalue problem \eqref{eq:mep1} 
and the eigenpair varieties  of $H_1$ and $H_2$,
\begin{align*}
 X &= \{(\Lam_1,\xv_1)\in \mathbb{C}^2\times\mathbb{P}^{n_1-1} : H_1(\Lam_1)\xv_1=0\},\\
 Y &= \{(\Lam_2,\xv_2)\in \mathbb{C}^2\times\mathbb{P}^{n_2-1} : H_2(\Lam_2)\xv_2=0\}.
 \end{align*}
Let  $A=\mathbb{C}^2$ and consider the maps
\[
\varphi \colon X \to A, \quad
\varphi(\Lam_1,\xv_1) = \Lam_1\qquad\text{and}\qquad
\psi \colon Y \to A, \quad
 \psi(\Lam_2,\xv_2) = \Lam_2.
\]
Their fiber product is
\begin{multline*}
X \times_A Y = \{
(\Lam_1,\Lam_2,\xv_1,\xv_2) \in \mathbb{C}^{4} \times \mathbb{P}^{n_1-1}\times \mathbb{P}^{n_2-1} : \\
(\Lam_1,\xv_1)\in X,\;
(\Lam_2,\xv_2)\in Y,\; \Lam_1=\Lam_2
\}
= \EP(H_1,H_2),
\end{multline*}
the two-parameter eigenpair variety of $H_1, H_2$.
\end{example}

\section{Fiber product homotopy method}\label{sec:fibprodhom}

Fiber products have previously appeared in numerical algebraic geometry in various contexts: study of exceptional sets \cite{SW2008}, algorithms to intersect varieties \cite{HW2017}, and numerical computations of Galois groups \cite[Section~4]{HRS2017}. However, the use of fiber products in a homotopy method for solving MEPs is, as far as we know, new. We will now describe this method.

\subsection{Start system}

We choose our start system by the following observation.
Let  $H(\Lam) = A_{0}-\lambda_{1}A_{1}-\dots-\lambda_{k}A_{k}$ be a linear polynomial matrix and $L(\Lam)= A \Lam - \boldsymbol{b}$ be an affine linear function for some $A \in \mathbb{C}^{(k-1)\times k}$ and $\boldsymbol{b} \in \mathbb{C}^{k-1}$. We claim that
\[
H(\Lam)\xv=0,\quad L(\Lam)=0
\]
is equivalent to a generalized eigenvalue problem, that we will call the \emph{associated generalized eigenvalue problem} or associated GEP for short.

To see this, let $\Lam =\beta \qv+\pv$ where  $\pv, \qv \in \mathbb{C}^k$ are such that $L(\pv) = 0$ and $\qv\in\ker(\Jac L)$, where $\Jac$ denotes the total derivative (also known as total differential). Note that the previous statement says nothing more than $A \pv  = \boldsymbol{b}$ and $A \qv = 0$.
Eliminating $\qv$ and $\pv$ from
\begin{equation}\label{eq:restrictGEP}
H(\beta \qv+\pv)\xv=0,  \quad \qv\in\ker(\Jac L),  \quad  L(\pv)=0,
\end{equation}
then gives us a GEP with $\beta$ the generalized eigenvalue and $\xv$ the generalized eigenvector. We will see in Example~\ref{eg:gep} how one may obtain a GEP from \eqref{eq:restrictGEP} but expressing the GEP in terms of general $A$, $\boldsymbol{b}$, and $A_0, \dots, A_k$ is complicated and unilluminating.
The \emph{associated eigenpairs} are
\begin{equation}\label{eq:associatedEP}
\{ (\Lam,\xv)  \in \mathbb{C}^k\times\mathbb{P}^{n-1} :  \Lam=\beta \qv+\pv, \;
 H(\beta \qv+\pv)\xv=0 \}.
\end{equation}
As each coordinate of $ H(\beta \qv+\pv)\xv $ is homogeneous  in $\xv$, $ H(\beta \qv+\pv)\xv =0$  is well-defined for $\xv  \in \mathbb{P}^{n-1}$.

For an MEP \eqref{eq:mep1}, let $L_i \colon \mathbb{C}^k \to \mathbb{C}^{k-1}$, $i=1,\dots,k$, be affine linear maps. Thus each $L_i(\Lam_i) =0 $ is an affine linear equation in $\Lam_{i}$, $i=1,\dots,k$. We obtain $k$ associated GEPs:
\begin{equation}\label{eq:solveProduct}
H_i(\Lam_i)\xv_i = 0,\quad L_i(\Lam_i)=0, \quad i=1,\dots,k.
\end{equation}

The set of solutions to \eqref{eq:solveProduct} will be called \emph{start solutions} or \emph{start points} and denoted $S$.
 If the MEP has intrinsic dimension $(d_1,\dots,d_k)$,
then the $i$th associated GEP has at most $d_i$ generalized eigenvalues.
Thus if $S_{i}$ denotes the set of associated eigenpairs that have distinct generalized eigenvalues,
then $S$ is given by the Cartesian product
\[
S =  S_{1}\times\dots\times S_{k}
\]
and it has cardinality $\prod_{i=1}^k \lvert S_i\rvert \le \prod_{i=1}^kd_i$.

We define $\boldsymbol{Q}_\FP \colon \mathbb{C}^{k^2} \times \mathbb{P}^{n_1-1}\times\dots\times\mathbb{P}^{n_k-1} \to
\mathbb{C}^{n_1 + \dots + n_k} \times \mathbb{C}^{k(k-1)}$ by
\begin{align}\label{eq:startSystem}
\boldsymbol{Q}_\FP(\Lam_{1},\dots,\Lam_k,\xv_{1},\dots,\xv_{k})&
\coloneqq \begin{bmatrix}
H_{1}(\Lam_{1})\xv_{1}\\
\vdots\\
H_{k}(\Lam_{k})\xv_{k}\\
L_{1}(\Lam_{1})\\
\vdots\\
L_{k}(\Lam_{k})
\end{bmatrix}
\end{align}
and choose our \emph{start system} to be the $\sum_{i=1}^k(k-1+n_i)$ equations in  $\sum_{i=1}^k(k+n_i)$ variables
\begin{equation}\label{eq:ss=0}
\boldsymbol{Q}_\FP(\Lam_{1},\dots,\Lam_k,\xv_{1},\dots,\xv_{k})=0.
\end{equation}
Again note that $H_{i}(\Lam_{i})\xv_{i} =0$  is well-defined for $\xv_i  \in \mathbb{P}^{n_i-1}$, $i =1,\dots, k$. We have the following easy observation.
\begin{lemma}\label{lemma:startSystem}
The points in $S$
are  regular solutions to
the start system \eqref{eq:ss=0}.
If the MEP is generic with respect to intrinsic dimension, then
$\lvert S\rvert=\prod_{i=1}^kd_i$ for any generic choice of affine linear maps $L_1,\dots,L_k$.
\end{lemma}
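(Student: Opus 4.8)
The plan is to use that $\boldsymbol{Q}_\FP = 0$ decouples completely into $k$ independent subsystems and then to reduce each subsystem to the classical statement that a simple generalized eigenvalue of a pencil is a nonsingular zero of that pencil. The $i$th block of \eqref{eq:ss=0}, namely $H_i(\Lam_i)\xv_i = 0$ together with $L_i(\Lam_i) = 0$, involves only the variables $(\Lam_i,\xv_i)$. Hence, working in a suitable affine chart $x_{i,i_0}=1$ on each $\mathbb{P}^{n_i-1}$, the map $\boldsymbol{Q}_\FP$ becomes an honest polynomial map between equidimensional spaces (its $i$th block has $n_i+k-1$ components in $n_i+k-1$ variables) and its Jacobian at any point of $S = S_1\times\dots\times S_k$ is block diagonal with respect to the partition of variables into the $k$ groups $(\Lam_i,\xv_i)$. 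Since $S$ is a product of finite sets, it therefore suffices to show, for each $i$, that a point of $S_i$ is a nonsingular isolated zero of the square system $H_i(\Lam_i)\xv_i=0$, $L_i(\Lam_i)=0$.

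To do this I pass to the associated GEP. As in \eqref{eq:restrictGEP}--\eqref{eq:associatedEP}, substituting $\Lam_i = \beta\qv_i + \pv_i$ with $\pv_i$ a particular solution of $L_i=0$ and $\qv_i$ spanning $\ker(\Jac L_i)$ is a biregular change of coordinates identifying $\{L_i(\Lam_i)=0\}$ with the line $\{\beta\qv_i+\pv_i:\beta\in\mathbb{C}\}$, on which $\beta$ is a global coordinate, and it turns the system into $M_i(\beta)\xv_i=0$ for the linear pencil $M_i(\beta)=\bigl(A_{i0}-\sum_{j}p_{ij}A_{ij}\bigr)-\beta\sum_{j}q_{ij}A_{ij}$, i.e., a genuine GEP. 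Biregular changes of coordinates preserve nonsingularity of zeros, so I am reduced to: if $\beta_0$ is a simple root of $p_i(\beta)\coloneqq\det M_i(\beta)$ --- which is precisely what a \emph{distinct} generalized eigenvalue means --- and $\xv_0$ spans $\ker M_i(\beta_0)$, then $(\beta_0,\xv_0)$ is a nonsingular zero of $M_i(\beta)\xv=0$. Simplicity of the root forces $\operatorname{rank}M_i(\beta_0)=n_i-1$, so $\operatorname{adj}M_i(\beta_0)=c\,\xv_0\yv_0^\tp$ with $c\neq0$ and $\yv_0$ spanning the left kernel; Jacobi's formula gives $p_i'(\beta_0)=\operatorname{tr}\bigl(\operatorname{adj}M_i(\beta_0)\,M_i'(\beta_0)\bigr)$, a nonzero multiple of $\yv_0^\tp M_i'(\beta_0)\xv_0$, whence $\bigl(\sum_j q_{ij}A_{ij}\bigr)\xv_0\notin\operatorname{im}M_i(\beta_0)$; writing out the Jacobian of $(\beta,\xv)\mapsto M_i(\beta)\xv$ in the chart $x_{i,i_0}=1$ (with $(\xv_0)_{i_0}\neq0$) one sees its column space equals $\operatorname{im}M_i(\beta_0)+\mathbb{C}\,\bigl(\sum_j q_{ij}A_{ij}\bigr)\xv_0=\mathbb{C}^{n_i}$, so it is nonsingular. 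This proves the first assertion.

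For the cardinality statement, fix $i$ and consider $p_i(\beta)=\det H_i(\beta\qv_i+\pv_i)$, the restriction of $g_i\coloneqq\det H_i$ to the line $\ell_i$ cut out by $L_i=0$. By hypothesis $\deg g_i=d_i$, and since the MEP is generic with respect to intrinsic dimension, $g_i$ is square-free. Two facts about a generic line then finish the count: (a) the top-degree form of $g_i$ is nonzero at a generic direction $\qv_i$, so $\deg p_i=d_i$ and no intersection of $\ell_i$ with $\{g_i=0\}$ escapes to infinity; and (b) a generic line meets $\{g_i=0\}$ transversally in $d_i$ distinct smooth points --- by a dimension count on the incidence variety of lines and their intersection points with $\{g_i=0\}$, the lines meeting $\{g_i=0\}$ non-transversally or through its singular locus form a proper subvariety of $\Gr(2,k+1)$, using that $\{g_i=0\}$ is reduced. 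Both conditions on $L_i$ are Zariski-generic, and intersecting them over $i=1,\dots,k$ remains generic. For such $L_i$ every root of $p_i$ is simple, hence has a one-dimensional eigenspace and contributes exactly one associated eigenpair to $S_i$; thus $\lvert S_i\rvert=d_i$ and $\lvert S\rvert=\prod_{i=1}^k\lvert S_i\rvert=\prod_{i=1}^k d_i$.

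The routine parts are the block-diagonal reduction and the pencil-derivative computation. The step requiring real care is (b): showing that for generic $L_i$ the univariate polynomial $p_i$ has no repeated roots --- ruling out tangencies and passages through the singular locus of the possibly reducible hypersurface $\{g_i=0\}$, and ruling out hidden multiplicity at infinity. This is exactly where square-freeness (``generic with respect to intrinsic dimension'') is indispensable, and it is the part I would write out in full detail.
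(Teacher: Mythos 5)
The paper does not actually supply a proof of this lemma --- it is labeled ``the following easy observation'' and left to the reader --- so there is no authorial argument to compare against; what you have written is a correct and carefully worked-out justification of exactly what the paper takes for granted. Your three-stage strategy is the natural one and it is sound: (i) the start system block-diagonalizes with respect to the variable groups $(\Lam_i,\xv_i)$, reducing regularity to the $k$ decoupled square subsystems; (ii) the biregular substitution $\Lam_i=\beta\qv_i+\pv_i$ turns each subsystem into a genuine pencil, where the adjugate-plus-Jacobi argument correctly shows that a simple root of $\det M_i(\beta)$ forces $\operatorname{rank}M_i(\beta_0)=n_i-1$ and $M_i'(\beta_0)\xv_0\notin\operatorname{im}M_i(\beta_0)$, so that in a chart with $(\xv_0)_{i_0}\neq0$ the $n_i\times n_i$ Jacobian has column space $\operatorname{im}M_i(\beta_0)+\mathbb{C}\,M_i'(\beta_0)\xv_0=\mathbb{C}^{n_i}$; and (iii) the cardinality claim reduces, as you say, to a generic line meeting the reduced degree-$d_i$ hypersurface $\{\det H_i=0\}$ in $d_i$ distinct transversal finite points, which follows from the standard incidence-variety dimension count together with the observation that the top-degree form of $\det H_i$ is nonzero on a generic direction $\qv_i$. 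You are also right to flag (b) as the step that genuinely uses the ``generic with respect to intrinsic dimension'' hypothesis (i.e.\ square-freeness); without it the generic line could produce a multiple root from a non-reduced component, and the count $\lvert S_i\rvert=d_i$ would fail. One small omission worth making explicit if you expand this: dropping the $i_0$-th column of $M_i(\beta_0)$ does not lower its rank precisely because $\sum_j(\xv_0)_jM_i(\beta_0)_{:,j}=0$ and $(\xv_0)_{i_0}\neq0$, which is why the chart choice matters.
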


We now give an illustration of how an MEP can be transformed into a system of GEPs by imposing random affine constraints.
\begin{example}\label{eg:gep}
Consider the two-parameter eigenvalue problem given by
\begin{align*}
H_1(\Lam_{1})&=
\begin{bmatrix}
2 & 3\\
5 & 7
\end{bmatrix}+\lambda_{11}\begin{bmatrix}
11 & 13\\
17 & 19
\end{bmatrix}+\lambda_{12}\begin{bmatrix}
23 & 29\\
31 & 37
\end{bmatrix},\\
H_2(\Lam_{2})&=
\begin{bmatrix}
12 & 31\\
15 & 71
\end{bmatrix}+\lambda_{21}\begin{bmatrix}
1 & 1\\
1 & 1
\end{bmatrix}+\lambda_{22}\begin{bmatrix}
2 & 2\\
2 & 2
\end{bmatrix},
\end{align*}
where we write $\Lam_i = (\lambda_{i1}, \lambda_{i2})$, $i =1,2$. \blue{This two-parameter eigenvalue problem is singular and has only two regular eigenvalues.}

We pick two random affine linear polynomials $L_1$ and $L_2$, e.g.,
\begin{align*}
L_1(\Lam_1)&=
(0.6909 + 0.2745\ii)\lambda_{11}+
(0.4277 - 0.1333\ii)\lambda_{12}-1,
\\
L_2(\Lam_2)&=
(-0.1443 + 0.5711\ii)\lambda_{21}+
(-0.0735 + 1.8085\ii)\lambda_{22}
-1.
\end{align*}
Then we have
\[\small
[\qv_{1}, \qv_{2},
\pv_{1}, \pv_{2}]=
\begin{bmatrix}
-0.41+0.31\ii & -0.52+0.79\ii &1.2501-0.4967\ii & 0\\
0.85+0.02\ii & 0.22-0.21\ii &0 & -0.0224-0.5520\ii
\end{bmatrix}.
\]
The polynomial matrices for the two associated GEP are {\small
\begin{align*}
H_1(\beta \qv_1+\pv_1) &=
\begin{bmatrix}
 -11.75 + 5.46\ii& -13.25 + 6.45\ii\\
 -16.25 + 8.44\ii& -16.75 + 9.43\ii
\end{bmatrix}
 -\beta
\begin{bmatrix}
  15.17 + 4.02\ii&  19.48 + 4.79\ii\\
  19.55 + 6.10\ii&  23.87 + 6.87\ii
\end{bmatrix},
\\
H_2(\beta \qv_2+\pv_2)&=
\begin{bmatrix}
  12.04 + 1.10\ii&  31.04 + 1.10\ii\\
  15.04 + 1.10\ii&  71.04 + 1.10\ii
\end{bmatrix}
-\beta
\begin{bmatrix}
  -0.08 + 0.35\ii&  -0.08 + 0.35\ii\\
  -0.08 + 0.35\ii & -0.08 + 0.35\ii
\end{bmatrix}.
\end{align*}}%
The first GEP has two finite eigenvalues:
 $ -0.9978 + 1.1933\ii$ and
$ -0.5637 + 0.3035\ii$.
The second  GEP has only one finite eigenvalue at
$-3.6333 -28.4804\ii$. \blue{Note that the values $d_1 = 2$, $d_2 = 1$ are in agreement with the number of regular eigenvalues.}
Thus the \emph{start solutions}, i.e., the solutions to our start system, of our homotopy is a set of two points with
$(\Lam_1,\Lam_2)$-coordinates below.
\[
\begin{array}{c|c}
\Lam_1 & \Lam_2\\ \hline
\\[-2ex]
( -0.9978 + 1.1933\ii)\qv_1+\pv_1
&(-3.6333 -28.4804\ii)\qv_2+\pv_2 \\[0.75ex]
(  -0.5637 + 0.3035\ii)\qv_1+\pv_1
&(-3.6333 -28.4804\ii)\qv_2+\pv_2
\end{array}
\]

\end{example}

\subsection{Target system}\label{sec:target}

For $i =1,\dots, k$, let $R_i \in \mathbb{C}^{(k-1)\times k(k-1)}$ be generic and let $G_i \colon \mathbb{C}^{k^2} \to \mathbb{C}^{k-1}$ be the linear function defined by
\begin{equation}\label{eq:defineG}
G_i(\Lam_1,\dots,\Lam_k)\coloneqq R_i D_k
\begin{bmatrix}
\Lam_{1}\\
\vdots\\
\Lam_{k}
\end{bmatrix},
\;
D_k\coloneqq
\begin{bmatrix}
I_{k} & -I_{k}\\
  &  \ddots & \ddots\\
  &   & I_{k} & -I_{k}
\end{bmatrix} \in \mathbb{C}^{k(k-1) \times k^2},
\end{equation}
where $I_k$ is the $k\times k$ identity matrix.
If  the matrices $R_1,\dots,R_k \in \mathbb{C}^{(k-1)\times k(k-1)}$ are generic, then
\begin{equation}\label{eq:G=}
\begin{aligned}
\{ (\Lam_1,\dots\Lam_k) &\in \mathbb{C}^{k^2} :
G_1(\Lam_1,\dots,\Lam_k)=\dots=G_k(\Lam_1,\dots,\Lam_k)=0 \} \\
=\{  (\Lam_1,\dots\Lam_k) &\in \mathbb{C}^{k^2} :  \Lam_1=\dots=\Lam_k \},
\end{aligned}
\end{equation}
and therefore,
\begin{align*}
\{ (\Lam_1,\dots\Lam_k,\xv_1,\dots,\xv_k) &\in \mathbb{C}^{k^2}\times\mathbb{P}^{n_1-1}\times\cdots\times\mathbb{P}^{n_k-1}  :
 G_1(\Lam_1,\dots,\Lam_k)=\dots=G_k(\Lam_1,\dots,\Lam_k)=0 \} \\
=\{  (\Lam_1,\dots\Lam_k,\xv_1,\dots,\xv_k) &\in \mathbb{C}^{k^2}\times\mathbb{P}^{n_1-1}\times\cdots\times\mathbb{P}^{n_k-1} : \Lam_1=\dots=\Lam_k \}=\mathbb{D},
\end{align*}
i.e., the linear space in Lemma~\ref{lemma:correspond}. Hence the system
\begin{equation}\label{eq:mep3}
H_{i}(\Lam_{i})\xv_{i}=0,\quad G_1(\Lam_1,\dots,\Lam_k)=\dots=G_k(\Lam_1,\dots,\Lam_k)=0,\qquad i=1,\dots,k,
\end{equation}
is equivalent to the fiber product MEP \eqref{eq:equivMEP}, which is  equivalent to the original MEP in \eqref{eq:mep1}.
We define
\begin{align}\label{eq:targetSystem}
\boldsymbol{P}_\FP(\Lam_{1},\dots,\Lam_k,\xv_{1},\dots,\xv_{k})
&\coloneqq \begin{bmatrix}
H_{1}(\Lam_{1})\xv_{1}\\
\vdots\\
H_{k}(\Lam_{k})\xv_{k}\\
G_{1}(\Lam_{1},\dots,\Lam_{k})\\
\vdots\\
G_{k}(\Lam_{1},\dots,\Lam_{k})
\end{bmatrix},
\end{align}
and choose our \emph{target system} to be
\[
\boldsymbol{P}_\FP(\Lam_{1},\dots,\Lam_k,\xv_{1},\dots,\xv_{k})=0,
\]
which is of course just \eqref{eq:mep3}.
 By Proposition~\ref{lemma:correspond},
our target system \eqref{eq:targetSystem} yields the eigenpairs of the MEP.

\subsection{Fiber product homotopy}\label{ss:fph}

The main objective of this section is to show that our start system is chosen correctly with probability one.
\begin{definition}\label{defn:ourHomotopy}
A \emph{fiber product homotopy} for the MEP with linear polynomial matrices
$H_1,\dots,H_k$  is a straight-line homotopy from $t=0$ to $t=1$ given by the polynomial map
\begin{gather}
\boldsymbol{H}_\FP \colon \mathbb{C}^{k^2}\times(\mathbb{P}^{n_1-1}\times\dots\times\mathbb{P}^{n_k-1})\times\mathbb{C}\to\mathbb{C}^{n_1+\dots +n_k}\times\mathbb{C}^{(k-1)k}, \notag\\
\boldsymbol{H}_\FP(\Lam_1,\dots,\Lam_k,\xv_1,\dots,\xv_k,t)
\coloneqq\begin{bmatrix}
H_{1}(\Lam_{1})\xv_{1}\\
\vdots\\
H_{k}(\Lam_{k})\xv_{k}\\
(1-t)L_{1}(\Lam_{1})+tG_{1}(\Lam_{1},\dots,\Lam_{k})\\
\vdots\\
(1-t)L_{k}(\Lam_{k})+tG_{k}(\Lam_{1},\dots,\Lam_{k})
\end{bmatrix}.  \label{eq:homotopyPQ}
\end{gather}
Note that $\boldsymbol{H}_\FP = (1-t)\boldsymbol{Q}_\FP+t\boldsymbol{P}_\FP$.
\end{definition}
Throughout the article we assume that $k \ge 2$ so that we indeed have a multiparameter eigenvalue problem. If $k=1$, then $k(k-1)=0$, and \eqref{eq:homotopyPQ} will not involve the path parameter $t$. If the reader is wondering whether our homotopy method applies to a standard  or generalized eigenvalue problem like in \cite{Zhang-Law-Golub}, this shows that the answer is no.

\begin{theorem}\label{thm:chosenCorrectly}
The fiber product homotopy for an MEP \eqref{eq:homotopyPQ}
with intrinsic dimension $(d_1,\dots,d_k)$
has a start system chosen correctly with probability one
if the start solutions $S$  has $\lvert S\rvert=\prod_{i=1}^kd_i$.
\end{theorem}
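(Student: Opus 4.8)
The plan is to verify the three conditions in Definition~\ref{def:correctly} for the homotopy $\boldsymbol{H}_\FP$ of \eqref{eq:homotopyPQ}, using genericity of the linear maps $L_1,\dots,L_k$ and of the matrices $R_1,\dots,R_k$ to obtain ``probability one'' statements. Condition (i) is essentially the content of Lemma~\ref{lemma:startSystem}: under the hypothesis $\lvert S\rvert=\prod_{i=1}^k d_i$, the start solutions $S=S_1\times\dots\times S_k$ are the explicitly computable regular solutions of $\boldsymbol{Q}_\FP=0$, obtained by solving $k$ independent associated GEPs. For (iii), note that the target system $\boldsymbol{P}_\FP=0$ is the fiber product MEP \eqref{eq:equivMEP}, whose isolated solutions are exactly the regular eigenpairs of the original MEP; by Proposition~\ref{prop:N} there are at most $\prod_{i=1}^k d_i$ of them. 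So both endpoints have at most $\prod_{i=1}^k d_i$ relevant points, and the crux is to show no paths diverge or collide before $t=1$, so that the $\lvert S\rvert=\prod_{i=1}^k d_i$ paths reach all isolated target solutions.

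The heart of the argument is condition (ii): for $0\le t<1$, the solution set of $\boldsymbol{H}_\FP(\cdot,t)=0$ consists of finitely many smooth paths parametrized by $t\in[0,1)$. I would observe that for \emph{each} fixed $t\in[0,1)$, the last $k(k-1)$ equations $(1-t)L_i(\Lam_i)+tG_i(\Lam_1,\dots,\Lam_k)=0$ form an affine linear system in the combined variable $(\Lam_1,\dots,\Lam_k)\in\mathbb{C}^{k^2}$; for generic $L_i$ and $R_i$ this system has full rank $k(k-1)$ for all $t\in[0,1)$ (the determinant of the relevant $k(k-1)\times k(k-1)$ submatrix is a polynomial in $t$ that is not identically zero — it is nonzero at $t=0$ by genericity of the $L_i$ — hence has finitely many roots, and one checks the $t=1$ degeneracy does not propagate backward). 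Thus for each $t<1$ this linear system cuts out an affine-linear slice $\mathbb{L}_t\subseteq\mathbb{C}^{k^2}$ of dimension $k^2-k(k-1)=k$, and the homotopy solution set at parameter $t$ is $(\mathbb{L}_t\times\mathbb{P}^{n_1-1}\times\dots\times\mathbb{P}^{n_k-1})\cap\EP(H_1,\dots,H_k)$ — the intersection of a fixed variety with a moving linear space. The standard way to conclude smoothness and finiteness of the paths is a parametrized Sard / Bertini-type transversality argument: introduce the $L_i$ (equivalently the pair $(A_i,\boldsymbol b_i)$) as extra parameters, show the incidence variety $\{(\Lam_1,\dots,\Lam_k,\xv_1,\dots,\xv_k,t,\text{params}) : \boldsymbol{H}_\FP=0\}$ is smooth of the expected dimension where the relevant Jacobian has full rank, and deduce that for almost every choice of parameters the fiber over $[0,1)$ is a smooth $1$-manifold with finitely many components, each a graph over $t$. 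Finiteness of the fiber count is inherited from Bezout: each slice meets $\EP(H_1,\dots,H_k)$ in at most $\prod d_i$ isolated points plus possibly positive-dimensional excess components, and genericity rules the latter out for $t<1$.

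The main obstacle I expect is precisely ruling out that a path runs off to infinity or into a singular point of $\EP(H_1,\dots,H_k)$ at some $t_0<1$ — i.e., showing the $\prod d_i$ paths genuinely stay smooth and bounded on all of $[0,1)$, not merely generically in $t$. The delicate point is that the linear slice $\mathbb{L}_t$ moves within the \emph{non-compact} $\mathbb{C}^{k^2}$ (the $\Lam_i$ live in affine, not projective, space), so a priori an eigenvalue coordinate could blow up; this is exactly the phenomenon that produces divergent paths in other homotopy methods for MEPs. I would handle it by exploiting the product structure: for each $i$, the $\Lam_i$-component of a path lies on the affine slice of $\EP(H_i)$ cut by the moving hyperplane arrangement, and since $\det H_i(\Lam_i)$ has degree $d_i$ with the hypersurface generically reduced (genericity with respect to intrinsic dimension), a generic pencil of linear slices avoids the ``bad'' locus where solutions escape or multiplicities collide, for all $t\in[0,1)$ simultaneously — the bad values of $t$ again forming a proper, hence finite, subvariety of $[0,1]$ that a generic choice of the $R_i$ pushes to $t=1$. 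Assembling this over $i=1,\dots,k$ via the fiber product, and invoking $\lvert S\rvert=\prod d_i$ to guarantee the path count is correct at $t=0$, completes the verification of (i)--(iii) and hence the theorem.
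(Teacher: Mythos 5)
Your overall strategy --- verify the three conditions of Definition~\ref{def:correctly} directly, using genericity of the $L_i$ and $R_i$ --- is structurally the same as the paper's, and your handling of condition~(i) via Lemma~\ref{lemma:startSystem} is identical. Where the two diverge is in how condition~(ii) (and implicitly (iii)) is closed, and this is where your argument leaves a genuine gap.

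The paper's proof is short because it invokes a single external result: the \emph{gamma trick}, \cite[Lemma~7.1.3]{SW05}. This lemma is precisely the device that handles, all at once, the issues you correctly flag as the crux: that for $t\in[0,1)$ the paths are smooth, finitely many, bounded (no escape to infinity), and terminate at every isolated target solution. The gamma trick says that for a parameter homotopy whose parameters are deformed along a complex line with generic endpoints (or with a generic complex scalar $\gamma$), the finitely many bad complex $t$-values at which the system degenerates avoid the real segment $[0,1)$ with probability one. In the fiber product homotopy, the randomness in the $L_i$ and $R_i$ plays exactly this role.

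Your ad hoc reconstruction of this does not quite close. Two specific soft spots: first, you assert that the bad $t$-values form ``a proper, hence finite, subvariety of $[0,1]$ that a generic choice of the $R_i$ pushes to $t=1$'' --- but $[0,1]$ is a real interval, not a complex variety, and the correct statement is that the discriminant locus in the $t$-plane is a finite set of complex points that, for generic $L_i, R_i$, misses $[0,1)$ entirely with probability one; nothing gets ``pushed to $t=1$.'' Second, the claim that ``a generic pencil of linear slices avoids the bad locus \dots for all $t\in[0,1)$ simultaneously'' is exactly the content of the gamma trick, stated without proof. Your Bertini/parametrized-Sard outline is the right \emph{idea} --- it is in fact how the gamma trick is proved --- but as written it stops short of the conclusion. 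Citing \cite[Lemma~7.1.3]{SW05} after establishing $\lvert\mathcal{C}\cap\EP(H_1,\dots,H_k)\rvert=\prod_i d_i$ (your condition (i) plus the count from Lemma~\ref{lemma:startSystem}) is what the paper does, and is what your proof needs to be complete.
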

\begin{proof}
By Lemma~\ref{lemma:startSystem},
the start solutions $S$
are known after solving $k$ generalized eigenvalue problems of dimensions $n_1,\dots,n_k$  respectively.
Consider the variety
\begin{multline*}
\mathcal{C}\coloneqq
\{
(\Lam_1,\dots,\Lam_k,\xv_1,\dots,\xv_k)\in\mathbb{C}^{k^2}\times \mathbb{P}^{n_1-1}\times\cdots\times\mathbb{P}^{n_k-1}
\colon
L(\Lam_1)=0,\dots,L(\Lam_k)=0
\}.
\end{multline*}
The intersection $\mathcal{C}\cap\EP(H_1,\dots,H_k)$ consists of $\lvert S \rvert=\prod_{i=1}^kd_i$ points.
As each $G_i$ is a linear function and each $R_i \in \mathbb{C}^{(k-1)\times k(k-1)}$ is generic, $i=1,\dots,k$,
it follows from the gamma trick \cite[Lemma~7.1.3]{SW05}
that for regular eigenpairs, the homotopy has a start system chosen correctly with probability one.
\end{proof}

We will provide more extensive numerical experiments in Section~\ref{sec:implement} but here we illustrate  our method with a small example: $k=3$ and $n_{1}=n_{2}=n_{3}=2$.
\begin{example}\label{eg:3222}
We generate an MEP by randomly choosing the $2\times 2$ coefficient matrices $A_{ij}$ for $i=1,2,3$ and $j=0,1,2,3$.  There are eight solutions to the start and target systems.
Note that fiber homotopy method requires that we work \blue{with $(\Lam_1,\Lam_2,\Lam_3,\xv_1,\xv_2,\xv_3) \in \mathbb{C}^{9} \times \mathbb{P}^1\times \mathbb{P}^1\times \mathbb{P}^1$}. The end point will however be of the form  $(\Lam,\Lam,\Lam,\xv_1,\xv_2,\xv_3)\in \blue{\mathbb{C}^{9} \times \mathbb{P}^1\times \mathbb{P}^1\times \mathbb{P}^1}$ where  $(\Lam, \xv_1,\xv_2,\xv_3) \in \blue{\mathbb{C}^{3} \times \mathbb{P}^1\times \mathbb{P}^1\times \mathbb{P}^1}$ is a \blue{multiparameter} eigenpair.

With our homotopy \eqref{eq:homotopyPQ}, we deform from $t=0$ to $t=1$.  Note that $\Lam_i=(\lambda_{i1},\lambda_{i2},\lambda_{i3}) \in \mathbb{C}^3$, $i=1,2,3$. In the left plot of Figure~\ref{eightEndPaths}, we track the $\lambda_{i1}$-coordinate of all eights paths for $t\in[0.9,1]$, $i =1,2,3$. The horizontal and vertical axes represent the real and imaginary axes. We see eight sets of three paths (colored red, blue, magenta to represent $i=1,2,3$), each converging to a point that represents the first coordinate of an eigenvalue. We picked the $\lambda_{i1}$-coordinate arbitrarily and could have done the same plot for any of the $15$ coordinates in $(\Lam_1,\Lam_2,\Lam_3,\xv_1,\xv_2,\xv_3) \in \blue{\mathbb{C}^{9} \times \mathbb{P}^1\times \mathbb{P}^1\times \mathbb{P}^1}$. What we are witnessing is a one-dimensional projection of the homotopy path in  \blue{$\mathbb{C}^{9} \times \mathbb{P}^1\times \mathbb{P}^1\times \mathbb{P}^1$} converging to the eight eigenpairs of the MEP. Note that ``one dimension'' here means ``one complex dimension'' which translates to the two real dimensions we see in Figure~\ref{eightEndPaths}.

The left plot shows only the behavior of the \blue{homotopy} path towards the end, i.e., only for $t\in[0.95,1]$. The right plot in Figure~\ref{onePath} shows the full homotopy path, i.e., for all $t\in[0,1]$, of the $\lambda_{i1}$-coordinates for one of the eight solutions of \blue{the left plot}.
\begin{figure}[htb]
\centering
\caption{Left: The $\lambda_{i1}$ coordinates of all eight paths for $t\in[0.\blue{95},1]$, $i=1,2,3$.
Right: The $\lambda_{i1}$ coordinates of \blue{the path highlighted in the left panel} for $t\in [0,1]$, $i=1,2,3$.
The horizontal and vertical axes represent the real and imaginary axes.
}
\includegraphics[trim={0ex, 0ex, 0ex, 0ex},clip,width=0.49\textwidth]{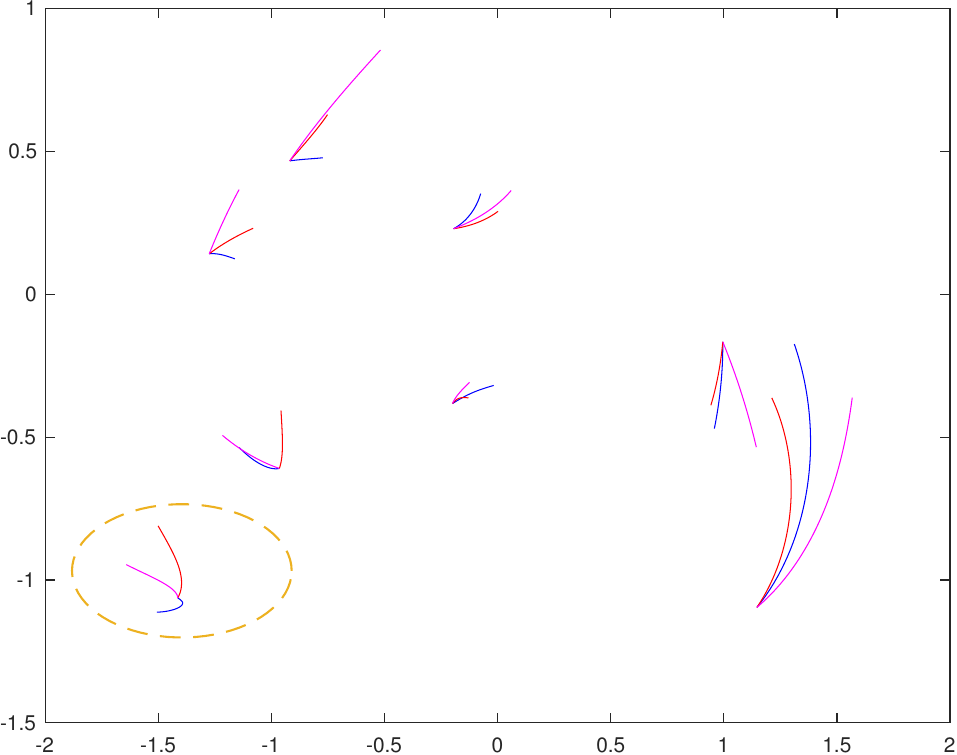}
\includegraphics[trim={0ex, 0ex, 0ex, 0ex}, clip,width=0.49\textwidth]{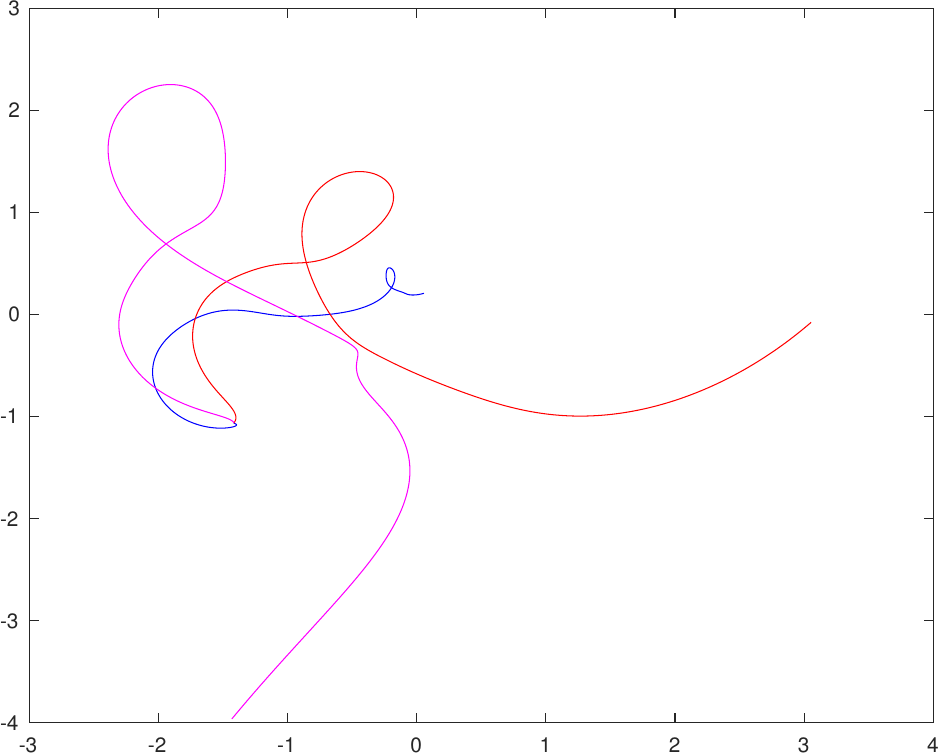}
\label{eightEndPaths}\label{onePath}
\end{figure}
\end{example}

We would like to emphasize that in Example~\ref{eg:3222}, the homotopy path is confined to a six-dimensional subspace of \blue{$\mathbb{C}^{9} \times \mathbb{P}^1\times \mathbb{P}^1\times \mathbb{P}^1$} as only $6 = k(k-1)$ equations involve the path parameter $t$.
The following difference between the fiber product homotopy and diagonal coefficient homotopy for MEP cannot be overstated.
For the former, at most $k(k-1)$  equations involve the path parameter, all of which are \emph{linear}
in $\Lam_1,\dots,\Lam_k$.
For the latter, \blue{$n_1+n_2+\cdots+n_k$} equations involve the path parameter, all of which are \emph{multilinear}.
\blue{Even when $k(k-1)>n_1+\cdots +n_k$, the paths of the fiber product homotopy would still be easier to track than those of the diagonal coefficient homotopy, as we will see later.}

\section{Continuation procedure}\label{sec:continue}

Observe that there are $k$ more variables than equations in \eqref{eq:homotopyPQ} because the eigenvectors are unique only up to scaling, i.e., they are indeed points in projective spaces.  To account for this arbitrary scaling, we fix a generic \emph{affine chart} in each projective space $\mathbb{P}^{n_i-1}$, i.e., by introducing the affine constraints  $\boldsymbol{d}_i^\tp \xv_i=1$ with $\boldsymbol{d}_i\in\mathbb{C}^{n_i}$,  $i=1,\dots,k$.

Let $A_{ij} \in \mathbb{C}^{n_i \times n_i}$, $j=0,1,\dots,k$,
$i=1,\dots,k$ be the matrices of our MEP as in Definition~\ref{definition:MEP}.
Recall the maps   $G_i\colon \mathbb{C}^{k^2}\to\mathbb{C}^{k-1}$
in \eqref{eq:defineG}
and
$L_i\colon\mathbb{C}^{k}\to\mathbb{C}^{k-1}$
in \eqref{eq:solveProduct}.
For $i=1,\dots,k$, we let
$\overline{L}_i \colon \mathbb{C}^{k^2}\to\mathbb{C}^{k-1}$ be defined by
\[
\overline{L}_i(\Lam_1,\dots,\Lam_k) \coloneqq L_i(\Lam_i),
\]
i.e.,  $\overline{L}_i$ extends the domain of $L_i$ from $\mathbb{C}^{k}$ to $\mathbb{C}^{k^2}$.

In the following, we write $\mathbbold{O}_{m\times n} \in \mathbb{C}^{m \times n}$ and $\mathbbold{O}_n \in \mathbb{C}^n$ for the zero matrix and zero vector, and $\mathbbold{1}_n \in \mathbb{C}^n$ for the  all ones vector. To track the homotopy, we use an \emph{Euler--Newton predictor-corrector} method.
The Euler step gives an approximate eigenpair whereas the Newton step refines the approximation:
\begin{description}
\item[\textsc{Euler step:}] Solve $( k^2+n_1+\dots+n_k) \times ( k^2+n_1 + \dots + n_k )$ linear system
\[
\begin{medsize}
\begin{bmatrix}
\diag\bigl(B_{1}(\xv_1),\dots,B_{k}(\xv_k)\bigr)
 &
\diag\bigl(H_{1}(\Lam_{1}),\dots,H_{k}(\Lam_{k})\bigr) \\[1.5ex]
 \mathbbold{O}_{k\times k^{2}}
 &
 \diag(\boldsymbol{d}_{1}^\tp,\dots,\boldsymbol{d}_{k}^\tp) \\[1.5ex]
(1-t)\Jac \overline{L}_1 +t\Jac G_1 &
\mathbbold{O}_{\blue{(k-1)\times (n_{1}+\cdots +n_k)}} \\
\vdots & \vdots \\
(1-t)\Jac \overline{L}_k+t\Jac G_k &
\mathbbold{O}_{\blue{(k-1)\times (n_1+\cdots +n_{k})}}
\end{bmatrix}
 \begin{bmatrix}
\widehat{\Lam}_1\\[-0.5ex]
\vdots\\
\widehat{\Lam}_k\\
\widehat{\xv}_1\\[-0.5ex]
\vdots\\
\widehat{\xv}_k
\end{bmatrix}
=
 \begin{bmatrix}
\mathbbold{O}_{n_1+\dots+n_k+k}\\[1.5ex]
 \blue{\overline{L}_1- G_1}\\
\vdots\\
 \blue{\overline{L}_k -G_k}
\end{bmatrix},
\end{medsize}
\]
where
$B_i(\xv_i) \coloneqq - [A_{i1}\xv_i,\dots,A_{ik}\xv_i] \in \mathbb{C}^{n_i \times k}$, $i =1,\dots,k$.
Here we regard the total derivatives as Jacobian matrices, i.e., $\Jac \overline{L}_i,  \Jac G_i  \in  \mathbb{C}^{(k-1) \times k^2}$, $i=1,\dots,k$.
As $\overline{L}_i$'s and $G_i$'s are affine linear maps, their Jacobians are constant matrices that do not depend on $\Lam_i$'s.

\item[\textsc{Predictor:}] This is given by
\[
\begin{medsize}
\boldsymbol{p} \coloneqq
\begin{bmatrix}
\widetilde{\Lam}_1\\
\vdots\\
\widetilde{\Lam}_k\\
\widetilde{\xv}_1\\
\vdots\\
\widetilde{\xv}_k
\end{bmatrix}
=
\begin{bmatrix}
\Lam_1\\
\vdots\\[0.5ex]
\Lam_k\\
\xv_1\\
\vdots\\
\xv_k
\end{bmatrix}
+
h
\begin{bmatrix}
\widehat{\Lam}_1\\
\vdots\\
\widehat{\Lam}_k\\
\widehat{\xv}_1\\
\vdots\\
\widehat{\xv}_k
\end{bmatrix}  \in \mathbb{C}^{ n_1 + \dots + n_k +k^2}
\end{medsize}
\]
where $h$ is the step size. The predictor is the input to the Newton step.
\item[\textsc{Newton step:}] Solve $(k^2+n_1+\dots+n_k) \times ( k^2 + n_1 + \dots + n_k  )$ linear system
\[
\begin{medsize}
\begin{bmatrix}
 \diag\bigl(B_{1}(\xv_1^{(\nl)}),\dots,B_{k}(\xv_k^{(\nl)})\bigr)
 &
 \diag(H_{1}\bigl(\Lam_{1}^{(\nl)}),\dots,H_{k}(\Lam_{k}^{(\nl)})\bigr) \\[1ex]
 \mathbbold{O}_{k\times k^{2}}&
 \diag(\boldsymbol{d}_{1}^\tp,\dots,\boldsymbol{d}_{k}^\tp) \\[1ex]
(1-t)\Jac \overline{L}_1 +t\Jac G_1 &
\mathbbold{O}_{\blue{(k-1)\times (n_{1}+\cdots +n_{k})}} \\
\vdots & \vdots \\
(1-t)\Jac \overline{L}_k+t\Jac G_k &
\mathbbold{O}_{\blue{(k-1)\times (n_{1}+\cdots +n_{k})}}
\end{bmatrix}
 \begin{bmatrix}
\Delta\Lam_1\\
\vdots\\
\Delta\Lam_k\\
\Delta\xv_1\\
\vdots\\
\Delta\xv_k
\end{bmatrix}
=-\begin{bmatrix}
H_1(\Lam_1^{(\nl)})\blue{ \xv_1^{(\nl)}}\\
\vdots\\
H_k(\Lam_k^{(\nl)})\blue{ \xv_k^{(\nl)}}\\
\boldsymbol{d}_1^\tp \xv_1^{(\nl)}-1\\
\vdots\\
\boldsymbol{d}_k^\tp \xv_k^{(\nl)}-1\\
\boldsymbol{v}^{(\nl)}
\end{bmatrix}
\end{medsize}
\]
where $B_i$ is as defined in the Euler step and
\[
\begin{medsize}
\boldsymbol{v}^{(\nl)} \coloneqq
\begin{bmatrix}
(1-t)\Jac \overline{L}_1 +t\Jac G_1\\
\vdots\\
(1-t)\Jac \overline{L}_k+t\Jac G_k
\end{bmatrix}
\begin{bmatrix}
\Lam_1^{(\nl)}\\
\vdots\\
\Lam_k^{(\nl)}
\end{bmatrix}
+
(1-t)
\begin{bmatrix}
  \mathbbold{1}_{k-1}\\
\vdots\\
 \mathbbold{1}_{k-1}
\end{bmatrix} \in \mathbb{C}^{k(k-1)}.
\end{medsize}
\]
The initial approximation $\bigl[\Lam_1^{(0)},\dots,\Lam_k^{(0)},\xv_1^{(0)},\dots,\xv_k^{(0)}\bigr]^\tp$ is given by $\boldsymbol{p}$.
\item[\textsc{Corrector:}]
This is given by the solution to the Newton step
\begin{equation}\label{eq:corr}
\begin{medsize}
\boldsymbol{c} =
\begin{bmatrix}
\Delta \Lam_1\\
\vdots\\
\Delta\Lam_k\\
\Delta \xv_1\\
\vdots\\
\Delta \xv_k
\end{bmatrix} \in \mathbb{C}^{ n_1 + \dots + n_k + k^2},
\end{medsize}
\end{equation}
which is then added to refine the approximation,
\[
\begin{medsize}
\begin{bmatrix}
 \Lam_1^{(\nl+1)}\\
\vdots\\
\Lam_k^{(\nl+1)}\\
 \xv_1^{(\nl+1)}\\
\vdots\\
 \xv_k^{(\nl+1)}
\end{bmatrix}
=
\begin{bmatrix}
 \Lam_1^{(\nl)}+\Delta \Lam_1\\
\vdots\\
\Lam_k^{(\nl)}+\Delta \Lam_k\\
 \xv_1^{(\nl)}+\Delta \xv_1\\
\vdots\\
 \xv_k^{(\nl)}+\Delta \xv_k
\end{bmatrix}.
\end{medsize}
\]
\end{description}
Our Euler--Newton predictor-corrector method uses a predictor $\boldsymbol{p}$ with size $h$, followed by Newton steps until \blue{the norm of $\boldsymbol{c}$} is sufficiently small or when the maximum number of iterations is reached.
If $t+h<1$, we return to the Euler step.
 If $t+h\geq1$, we update $h$ so that $t+h=1$, and do one final Euler step followed by Newton steps.
For a rough idea of the relative costs, tracking one path, i.e., one eigenpair, typically takes a total of $50$ Euler steps and a total of \blue{$400$} Newton steps.
We provide actual implementation details in Section~\ref{sec:implement}.

The matrices in the Euler and Newton steps will in general look like the one depicted in Figure~\ref{fig:matrix} for a four-parameter eigenvalue problem.
The $k$ row vectors $\boldsymbol{d}_{1}^\tp, \dots, \boldsymbol{d}_{k}^\tp$ near the bottom right corner and the $(k^2 - k) \times k$ block representing $\bigl[(1-t)\Jac \overline{L}_i+t\Jac G_i\bigr]_{i=1}^k$ on the bottom left corner of the matrix are dense
--- recall from Lemma~\ref{lemma:startSystem}, Theorem~\ref{thm:chosenCorrectly}, and the first paragraph of this section that we require the entries in these blocks be generic, i.e., there is zero probability that any of these entries is zero. On the other hand,  if the matrices $A_{ij} \in \mathbb{C}^{n_i \times n_i}$, $j=0,1,\dots,k$, $i=1,\dots,k$, defining the MEP are sufficiently sparse, then the lighter shaded blocks in the top right part of the matrix representing $H_i(\Lam_i)$ (Euler step) or $H_i(\Lam_i^{(\nl)})$ (Newton step), being the sum of $k+1$ sparse matrices, will also be sparse. The darker shaded blocks on the top left part of the matrix representing $B_i(\xv_i)$ (Euler step) or $B_i(\xv_i^{(\nl)})$ (Newton step) are expected to be dense, as the column vectors in these blocks are each a product of a sparse matrix and a dense vector.

\begin{figure}[htb]
\centering
\caption{Pictorial representation of a typical matrix in the Euler and Newton steps. Here $k = 4$ and $n_1 = n_2 = n_3 =n_4$. The darker blocks are almost always dense; the lighter blocks are sparse if the input matrices $A_{ij}$'s are sufficiently sparse; the white areas are always zero.}
\label{fig:matrix}
\begin{tikzpicture}[%
    nodeB/.style={minimum width=.20cm, minimum height=2cm,
        draw, fill=#1, outer sep=0pt},
    nodeB/.default={black!30},
    nodeH/.style={minimum width=2cm, minimum height=2cm,
        draw, fill=#1, outer sep=0pt},
    nodeH/.default={black!5},
    nodeD/.style={minimum width=2cm, minimum height=.075cm,
        draw, fill=#1, outer sep=0pt},
    nodeD/.default={black!30},
    nodeLG/.style={minimum width=.20cm, minimum height=.20cm,
        draw, fill=#1, outer sep=0pt},
    nodeLG/.default={black!30},
    B/.style={nodeB},
    H/.style={nodeH},
    D/.style={nodeD},
    LG/.style={nodeLG},
    mymatrix/.style={draw, column sep=-\pgflinewidth, row sep=-\pgflinewidth, matrix of nodes, inner sep=0pt}
    ]

    \matrix[mymatrix] (U)
    {|[B]|&&&&|[H]|\\
     &|[B]|       &&&&|[H]|    \\
     &&|[B]|&&&&|[H]|\\
     &&&|[B]|&&&&|[H]|\\
     &&&&|[D]| \\
     &&&&&|[D]| \\
     &&&&&&|[D]| \\
     &&&&&&&|[D]| \\
           |[LG]|&|[LG]|&|[LG]|&|[LG]|& \\
           |[LG]|&|[LG]|&|[LG]|&|[LG]|& \\
           |[LG]|&|[LG]|&|[LG]|&|[LG]|& \\
           |[LG]|&|[LG]|&|[LG]|&|[LG]|& \\
           };

\draw [<->] (-4.4,4.7) -- (-3.6,4.7) ;
\draw [<->] (-3.6,4.7) -- (4.4,4.7);
\node[label=north:\rotatebox{0}{$k^2$}] at (-3.9,4.5) {};
\node[label=north:\rotatebox{0}{$n_1+\dots +n_k$}] at (0,4.5) {};

\draw [<->] (4.55,-4.55)  -- (4.55,-3.8) ;
\draw [<->] (4.55,-3.8)  -- (4.55,-3.47) ;
\draw [<->] (4.55,-3.47)  --  (4.55,4.55);

\node[label=right:\rotatebox{0}{$\mathrlap{k^2-k}$}] at (4.4,-4.2) {};
\node[label=right:\rotatebox{0}{$\mathrlap{k}$}] at (4.4,-3.65) {};
\node[label=right:\rotatebox{0}{$\mathrlap{n_1+\dots +n_k}$}] at (4.4,0.6) {};

\end{tikzpicture}
\end{figure}
The Euler--Newton  predictor-corrector method will converge  to the solution of the target system if the step sizes are sufficiently small and the approximate start solution is sufficiently close to the actual start solution \cite[Theorem~5.2.1]{AG1990}.
Quantifying the ``sufficiently small'' and ``sufficiently close'' is still an active area of research, but we will see in Sections~\ref{sec:rand}--\ref{sec:sing} ample numerical evidence of stable convergence to true solutions (even for \blue{dimension-deficient} singular MEPs). Indeed, the results in Sections~\ref{sec:rand}--\ref{sec:sing} are from thousands of MEPs --- a single value in the tables/figures represents an aggregate over tens or hundreds of runs ---  and we did not encounter any instance where Euler--Newton failed to converge.

\section{Implementation}\label{sec:implement}

We implemented\footnote{\url{https://github.com/JoseMath/MEP_Homotopy}}  our method in both \textsc{Matlab} and \textsc{Bertini}/\textsc{Macaulay2}, catering respectively to the numerical computing and symbolic computing communities. \blue{All experiments are carried out on single node, an Intel E5-2680v4 2.4 GHz processor with 28 threads and 56 GB of RAM, in the University of Chicago Research Computing Center.} The parameters below can be readily changed by the user.
\begin{description}
\item[\textsc{Inputs:}] The inputs are the coefficients $A_{ij} \in \mathbb{C}^{n_i \times n_i}$, $j=0,1,\dots,k$, $i=1,\dots,k$, of the polynomial matrices $H_1,\dots,H_k$, as in Definition~\ref{definition:MEP}.

\item[\textsc{Start solutions:}] Solutions to the start system \eqref{eq:startSystem} are obtained as follows.
For each $i$, we set the constant terms of $L_i(\Lam_i)$ to be $-1$, and
generate other coefficients from the standard complex Gaussian distribution using \texttt{randn} (\textsc{Matlab}) or  \texttt{random CC} (\textsc{Macaulay2}). In our \textsc{Matlab} implementation, we determine a null vector $\qv_i\in \ker(\nabla L_i)$ using \texttt{null}, a particular solution $\pv_i$ to $L_i(\Lam_i)=0$ using \texttt{mldivide} (i.e., backslash),
and the associated eigenpairs using \texttt{eig}. In our \textsc{Bertini/Macaulay2} implementation, we use  \texttt{bertiniZeroDimSolve} (\textsc{Bertini}) with default settings  to determine the solutions to $H_i(\Lam_i)\xv_i=0$ and $L_i(\Lam_i)=0$.

\item[\textsc{Continuation:}] The bulk of our computations is in running the Euler--Newton predictor-corrector method  in Section~\ref{sec:continue}. In our \textsc{Matlab} implementation, the linear systems in the Euler and Newton steps are solved using  \texttt{mldivide}. \blue{We use two sets of criteria for controlling the step size and termination. In the \texttt{conservative} criterion, the step size $h$ in the predictor $\boldsymbol{p}$ is updated as follows: if the number of Newton iterations $i_{\textsc{nt}} \leq 2$, we double $h$; if $i_{\textsc{nt}} >8$, we halve $h$;  else we keep the same $h$.}
The \blue{Newton iteration continues until the corrector $\boldsymbol{c}$ satisfies $\|\boldsymbol{c} \|_{\infty} <10^{-9}$ or when number of iterations exceeds $\max\{20, k\max(n_1,\dots ,n_k)+5\}$. In the \texttt{fast} criterion, the} step size $h$ in the predictor $\boldsymbol{p}$ is updated as follows: if the number of Newton iterations $i_{\textsc{nt}} \leq 2$, we double $h$; if $i_{\textsc{nt}} >5$, we halve $h$;  else we keep the same $h$. The \blue{Newton iteration continues until the corrector $\boldsymbol{c}$ satisfies $\|\boldsymbol{c} \|_{2} <10^{-8}$ or when the number of iterations exceeds five.
The maximum and minimum step sizes are $10^{-2}$ and $10^{-6}$ respectively for both two criteria.
The \texttt{fast} criterion is the same one used in \cite{Dong2016}, and we will use it when comparing with the results given by the method in \cite{Dong2016}. Otherwise, we will use the \texttt{conservative} criterion for better accuracy.} These choices are heuristical but may be easily fine-tuned.

\textsc{Bertini} is specifically designed with the homotopy method in mind and these tasks are built-in and automated; users need only specify the configurations they wish to change in the input file. In our \textsc{Bertini}/\textsc{Macaulay2} implementation, we \blue{may} simply use \texttt{runBertini} with default configurations.

\item[\textsc{Stopping conditions:}]
In our \textsc{Matlab} implementation,
we estimate the endpoint of the homotopy as follows: when $t + h > 1$, we perform the Euler step with step size $h = 1 - t$ and refine our endpoint with Newton's method until the change in the update is sufficiently small (the default is $\|\boldsymbol{c}\|_{\infty} <10^{-9}$) or  when the maximum number of iterations is reached (default is set to $\max\{20,k\cdot \max(n_1,\dots,n_k)+5\}$).  Again, \textsc{Bertini} is designed for homotopy method and has a variety of built-in options for estimating endpoint. \blue{Among other things, \textsc{Bertini} has implemented various sophisticated ``endgames'' to identify solutions with multiplicities larger than one and return the values of these multiplicities, a feature that is too involved to replicate in our \textsc{Matlab} implementation.} We use the default ``fractional power series endgame.''
\end{description}

In the next three sections, we present our numerical results on MEPs that are (i) randomly generated (Section~\ref{sec:rand}), (ii) from a real-world application (Section~\ref{sec:mathieu}), and (iii) \blue{dimension-deficient and} singular (Section~\ref{sec:sing}).
We compare speed and accuracy of our method with those of Delta and diagonal coefficient homotopy methods.

\section{Numerical results I: Randomly generated MEPs}\label{sec:rand}

Here we randomly generate our inputs $A_{ij} \in \mathbb{C}^{n_i \times n_i}$, $j=0,1,\dots,k$, $i=1,\dots,k$, from the standard complex Gaussian distribution. For convenience, we assume that $n_1 =\dots = n_k = n$ so that there are just two parameters $k$ and $n$ to consider.
We write $N$ for the number of  eigenpairs, given by Proposition~\ref{prop:N}.

We report the maximum time it takes to track one path, denoted by $t_{\textsc{path}}$, and the average number of Newton iterations  during the path tracking, denoted by $\phi(k,n)$.
The value of $t_{\textsc{path}}$ is important as path-tracking is a task with high parallelism and  $t_{\textsc{path}}$ provides a good estimate of the time it takes to run fiber product homotopy method in parallel. Nevertheless, we are only able to report $t_{\textsc{path}}$ for our \textsc{Matlab} implementation as \textsc{Bertini} deals with path-tracking in a more sophisticated and automated manner that offers users no easy way of determining the total track time of one path.

We investigate the stability of our method and the accuracy of our solutions by examining the backward error as defined in \cite[Section~3]{HP2003}.
The normwise backward error of an approximate eigenpair $(\Lam,\xv_1,\dots,\xv_k)$ of an MEP with coefficients $A_{ij} \in \mathbb{C}^{n_i \times n_i}$, $j=0,1,\dots,k$, $i=1,\dots,k$, and polynomial matrices $H_1,\dots,H_k$, as in Definition~\ref{definition:MEP} is given by
\begin{align*}
\eta(\Lam,\xv_1,\dots,\xv_k) &\coloneqq
\min\{ \varepsilon \in \mathbb{R} :
(H_i(\Lam)+\Delta H_i(\Lam))\xv_i=0, \\
&\qquad\qquad\qquad\lVert \Delta A_{ij}\rVert \leq \varepsilon
\lVert A_{ij}\rVert, \;i=1,\dots,k,\; j=0,\dots,k\},\\
\Delta H_i(\Lam)&\coloneqq \Delta A_{i0}-\sum\nolimits_{j=1}^k\lambda_j\Delta A_{ij},\qquad i =1,\dots,k.
\end{align*}
To compute $\eta(\Lam,\xv_1,\dots,\xv_k)$
we take advantage of \cite[Theorem~2]{HP2003}, which says
\[
\eta(\Lam,\xv_1,\dots,\xv_k)=\max_{i=1,\dots,k}\biggl(\frac{\lVert H_i(\Lam)\xv_i  \rVert}{\lVert A_{i0} \rVert+\sum_{j=1}^k\lvert\lambda_j\rvert\lVert A_{ij}\rVert}\biggr).
\]

\subsection{Fixed $k$, varying $n$}

Here we fix $k=3$ and $n_1=n_2=n_3 = n$.
For each value of $n$ we generate ten three-parameter eigenvalue problems.  Note that the expected number  of eigenpairs in these problems is $N = n^3$. \blue{As our \textsc{Matlab} implementation of fiber product homotopy method computes in parallel with $28$ threads, the wall time reported in Table~\ref{table:Random system} is estimated by multiplying the real wall time by $28$.}
In Table~\ref{table:Random system},
we see that our method is faster than the timings reported for the diagonal coefficient homotopy method in \cite{Dong2016}. The Delta method fails for larger values of $n$ --- \blue{it} crashes with \blue{an} out-of-memory error in every instance when $n=\blue{30}$.
\begin{table}[hbt]
\centering
\caption{Elapsed timings (in seconds) for Delta method, fiber product homotopy method (averaged over ten runs), and diagonal coefficient homotopy method \cite{Dong2016}.}
\label{table:Random system}
\setlength{\extrarowheight}{2pt}
\begin{tabular}{c|c|c|c|c|c|c}
$n$ & $N$ &\multicolumn{3}{c|}{Wall time} & $t_{\textsc{path}}$& $\phi(3,n)$\tabularnewline
\hline
 & $n^{3}$ &  Delta Mtd.& Fiber Prod.&Diag. Coeff.& \multicolumn{2}{c}{Fiber Prod.} \tabularnewline
\hline
\hline
10 & 1000 & \blue{1.44} & \blue{264.38} & 779.47& \blue{0.22} & \blue{386} \tabularnewline
\hline
15 & 3375 & \blue{19.14} & \blue{906.00} & 2888.73& \blue{0.25} & \blue{398} \tabularnewline
\hline
20 & 8000 & \blue{150.59} & \blue{2547.29} & 7857.44& \blue{0.31} & \blue{402} \tabularnewline
\hline
25 & 15625 & \blue{815.9125} & \blue{5905.11} & 17169.53 & \blue{0.37} & \blue{408}\tabularnewline
\hline
30 & 27000 & \textsc{failed} & \blue{12820.89} & 32786.64 & \blue{0.46} & \blue{418}
\end{tabular}
\end{table}

Our results for backward errors are presented in Figure~\ref{fig:Random system}, where it is clear that fiber product homotopy method (blue plot) has significantly smaller backward error than the Delta method (red plot) in our numerical experiments.
\blue{The experiments in Table~\ref{table:Random system} and Figure~\ref{fig:Random system} find \emph{all} eigenpairs by tracking all start solutions of  \eqref{eq:startSystem}.}
\begin{figure}[hbt]
\centering
\caption{The \blue{$\log_{10}$} backward \blue{errors} (vertical axis) of fiber product method \blue{(blue)} and the Delta method \blue{(red)} are plotted against the eigenvalues (horizontal axis) ordered by increasing norm from left to right.}
\includegraphics[trim={0ex, 0ex, 0ex, 0ex}, clip,width=0.49\textwidth]{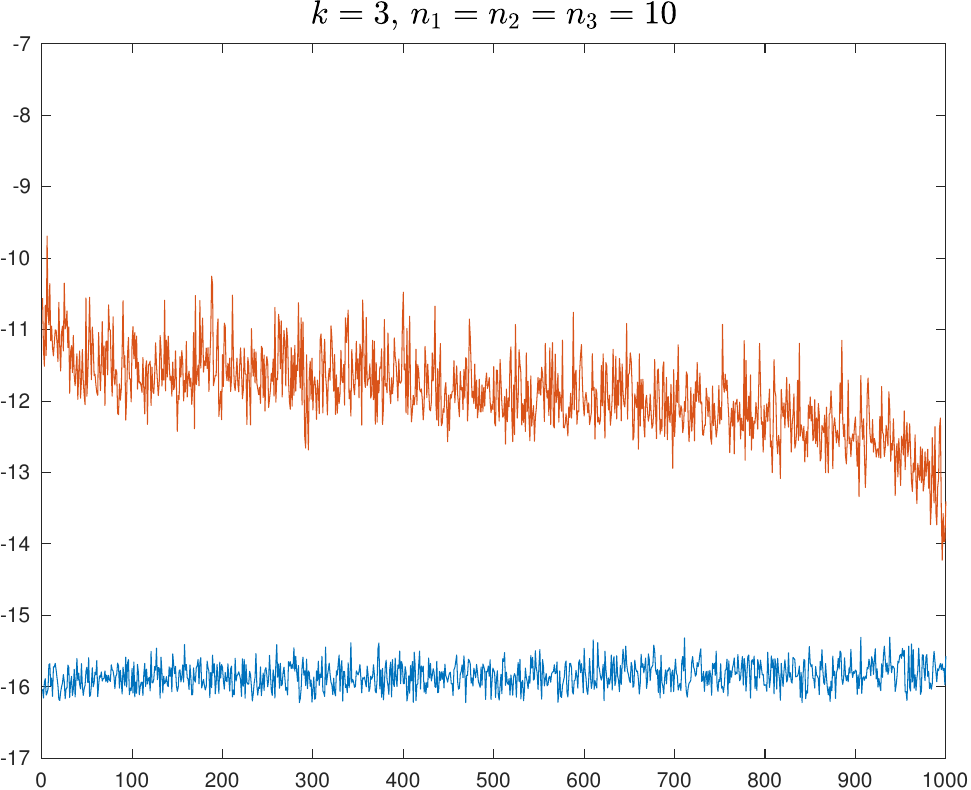}
\includegraphics[trim={0ex, 0ex, 0ex, 0ex}, clip,width=0.49\textwidth]{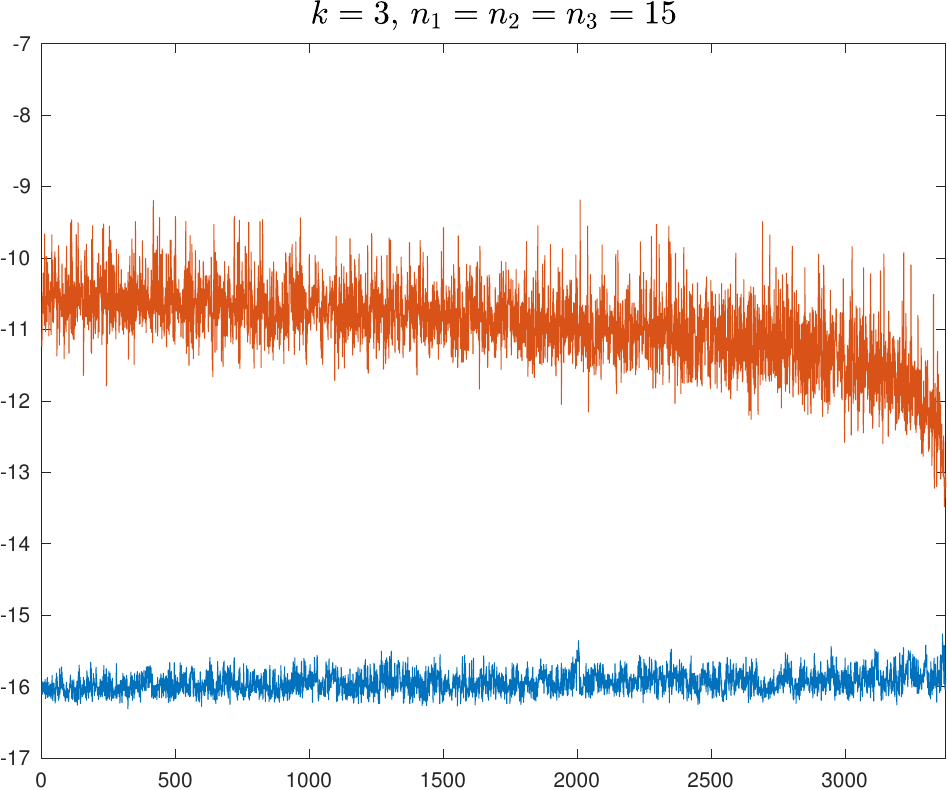}
\includegraphics[trim={0ex, 0ex, 0ex, 0ex}, clip,width=0.49\textwidth]{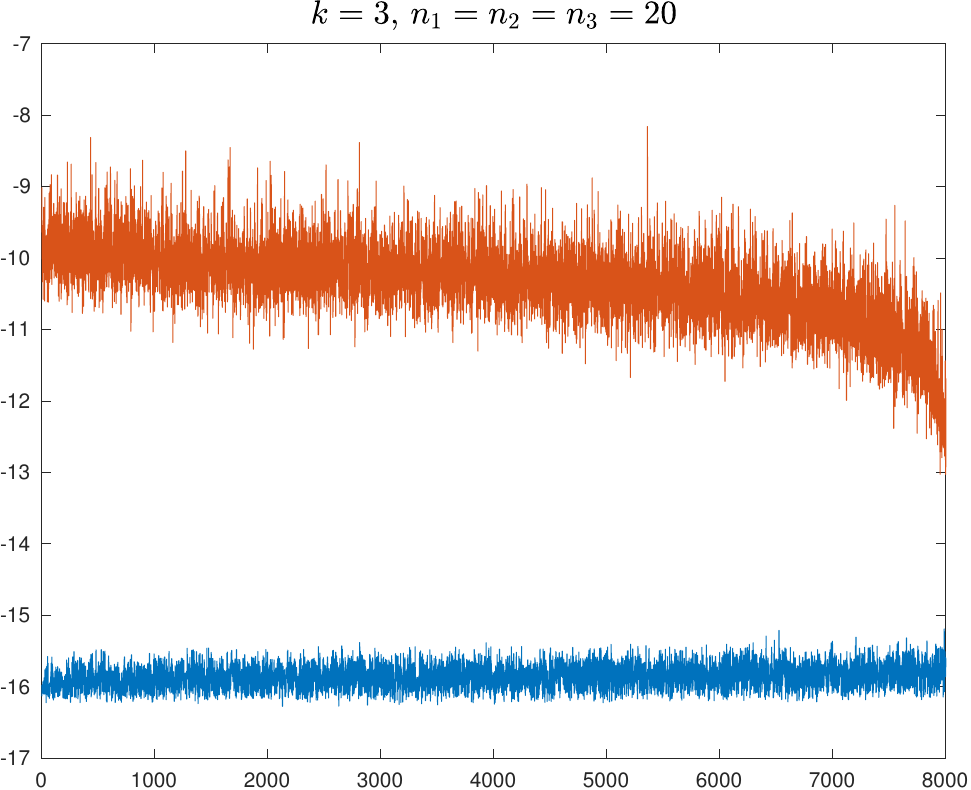}
\includegraphics[trim={0ex, 0ex, 0ex, 0ex}, clip,width=0.49\textwidth]{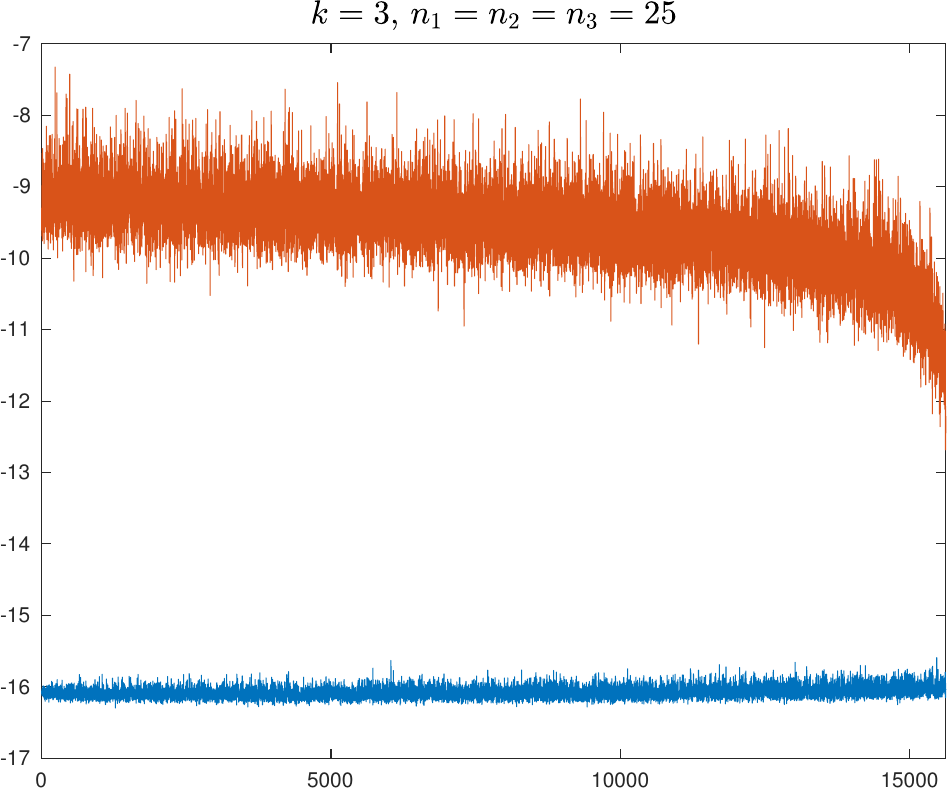}
\label{fig:Random system}
\end{figure}

In our next experiment, our goal is to examine the effect of a substantial increase in $n$ on speed and accuracy of our fiber product homotopy method.
\blue{Finding all eigenpairs would take too long and serves little purpose for this next experiment.}
Instead, we randomly generate $100$ MEPs for each $n$ and track \emph{one} random start solution.
The timings in Table~\ref{table:fixKonePath} are rough estimates of the time it would have taken to find all eigenpairs, obtained by multiplying average wall time for one randomly chosen eigenpair by $n^3$.
The backward errors in Table~\ref{table:fixKonePath} are for  one randomly  chosen eigenpair (and not multiplied by $n^3$). Evidently, increasing $n$ has negligible effect on  backward errors, which are all splendidly small --- on the order of $10^{-15}$ or less.

\begin{table}[h]
\centering
\caption{Timings (in seconds) and accuracy for fiber product homotopy method ($100$ runs). }
\label{table:fixKonePath}
\setlength{\extrarowheight}{2pt}
\begin{tabular}{c|c|c|c|c|c|c}
$n$  &\multicolumn{3}{c|}{Wall time} & \multicolumn{3}{c}{Backward error}\tabularnewline
\hline
 &  Best & Avg & Worst&   Best & Avg & Worst \tabularnewline
\hline
\hline
30 & \blue{0.71} & \blue{1.12} &  \blue{4.45} & \blue{$7.47\times 10^{-17}$} & \blue{$1.81\times 10^{-16}$} & \blue{$ 1.42\times 10^{-15}$} \tabularnewline
\hline
70 & \blue{2.88} & \blue{4.27} & \blue{8.85} & \blue{$8.18\times 10^{-17}$}  & \blue{$1.66\times 10^{-16}$} & \blue{$1.26\times 10^{-15}$} \tabularnewline
\hline
150 & \blue{16.08} &  \blue{29.07} & \blue{91.45} & \blue{$1.05\times 10^{-16}$} & \blue{$1.96\times 10^{-16}$} & \blue{$1.36\times 10^{-15}$}
\end{tabular}
\end{table}

\subsection{Fixed $n$, varying $k$}

This time we fix $n_1 =\dots =n_k =3$ and vary $k$ from $3,\dots,9$. The expected number of eigenpairs is then $N = 3^k$. If the reader is wondering why we do not increase $k$ in a more drastic manner, note that increasing $k$ produces a corresponding exponential increase in the number of solutions ---
for each randomly generated MEP, there are $3^k$ eigenpairs. Also, unlike changing $n$, which just changes the dimension of the problem, changing $k$ gives a \emph{different class} of problems --- for example, a two-parameter eigenvalue problem is qualitatively different from a one-parameter eigenvalue problem (i.e., a GEP) --- and each $k$ deserves a careful examination.
Compared to the numbers for diagonal coefficient homotopy method in \cite[Table~3]{Dong2016}, the numbers in Table~\ref{table:Fix_n_vary_k} show that the fiber product homotopy method is significantly faster and also more stable in the sense that every path converged and did not need to be rerun.

The reader is reminded that fiber product homotopy method tracks $k$ paths each carrying a copy of the eigenvalue; these $k$ copies $\Lam_1,\dots,\Lam_k$ converge to the same eigenvalue $\Lam_1 =\dots = \Lam_k$ if the method runs correctly (see Example~\ref{eg:3222}). This is indeed the case as the reader can see from the near zero values  of \blue{$\delta(k) \coloneqq \max_{2\le i\le k}(\lVert\Lam_1-\Lam_i\rVert_1)$} reported in Table~\ref{table:Fix_n_vary_k}.
\begin{table}[h]
\centering
\caption{Elapsed timings (in seconds) for Delta method and the two homotopy methods, all  in \textsc{Matlab}. Accuracy for our method. Here $\delta(k) = \max_{2\le i\le k}(\lVert\Lam_1-\Lam_i\rVert_1)$.
}
{\footnotesize
\label{table:Fix_n_vary_k}
\setlength{\extrarowheight}{2pt}
\begin{tabular}{c|c|c|c|c|c|c|c}
$k$ & $N$ & \multicolumn{3}{c|}{Wall time} & \blue{$\delta(k)$} & $\phi(k,\blue{3})$&$t_{\textsc{path}}$ \tabularnewline
\hline
 & $3^k$ & Delta Mtd.& Fiber Prod.& Diag. Coeff. &  \multicolumn{3}{c}{Fiber Prod.}\tabularnewline
\hline
\hline
4 & 81 & \blue{0.13} & \blue{50.81} &77.63& \blue{$2.90\times 10^{-15}$} & \blue{382} & \blue{0.27}\tabularnewline
\hline
5 & 243 & \blue{0.19} & \blue{89.44} & 288.24 & \blue{$5.43\times 10^{-15}$} & \blue{403} & \blue{0.26}\tabularnewline
\hline
6 & 729 & \blue{1.32} & \blue{269.92} & 1089.10 & \blue{$1.37\times 10^{-14}$} & \blue{433} & \blue{0.32} \tabularnewline
\hline
7 & 2187 & \blue{12.30} & \blue{920.54} &3979.94& \blue{$ 3.18\times 10^{-14}$} & \blue{443} & \blue{0.40}\tabularnewline
\hline
8 & 6561 & \blue{172.44} & \blue{3355.81} &13445.96& \blue{$7.35\times 10^{-13}$} & \blue{457} & \blue{0.49} \tabularnewline
\hline
9 & 19683 & \blue{\textsc{failed}} & \blue{12893.37} & 48624.78& \blue{$1.06\times 10^{-13}$}  & \blue{466} & \blue{0.64}
\end{tabular}}
\end{table}

\subsection{Comparisons on Bertini}

The results in Tables~\ref{table:Random system} and \ref{table:Fix_n_vary_k} compare the \textsc{Matlab}  implementation of fiber product homotopy method  \eqref{eq:homotopyPQ}  to the results for diagonal coefficient homotopy method  \eqref{eq:diagPQ} reported in \cite{Dong2016}. Here we will compare them on \textsc{Bertini}.\footnote{We have also used a \textsc{Macaulay2} \cite{M2} package \cite{bertini4M2} to produce the relevant input files.}  As \textsc{Bertini} is designed for homotopy continuation methods, we do not include a non-homotopy method like Delta method in our comparison on this platform.

In this numerical experiment, we look at a two-parameter eigenvalue problem, i.e., $k=2$, with $n_1=n_2 =n$. The dimension of the problem is thus $N =n^2$. From Table~\ref{table:BertiniTest}, the fiber product homotopy method is consistently faster that the diagonal coefficient homotopy method in \textsc{Bertini}. We include the timings of our \textsc{Matlab} implementation of the fiber product homotopy method in the last column for comparison.
\begin{table}[h]
\centering
\caption{Elapsed timings (in seconds) for comparing homotopy methods and implementations.
}
\label{table:BertiniTest}
\setlength{\extrarowheight}{2pt}
\begin{tabular}{c|c|c|c|c}
$n$ & $N$  &  \multicolumn{2}{c|}{\textsc{Bertini}} & \textsc{Matlab}\tabularnewline
\hline
 & $n^2$ & Diag. Coeff. & Fiber Prod. & Fiber Prod.\tabularnewline
\hline
\hline
$5$ & 25 & 0.25 & 0.45 & 0.61 \tabularnewline
\hline
$15$ & 225 & 31.93 & 29.36 &  3.02  \tabularnewline
\hline
$30$ & 900   & 1158.61 & 882.68 & 18.99\tabularnewline
\hline
$50$ & 2500 & 18661.62 & 14840.28  & 110.79
\end{tabular}
\end{table}

\section{Numerical results II: Mathieu's systems}\label{sec:mathieu}

An example where multiparameter eigenvalue problems surface is Mathieu's systems, which in turn arises from studies of  vibration of a fixed elliptic membrane \cite{PLESTENJAK2015585,Volkmer1988}.
We will test our \textsc{Matlab} implementation of fiber product homotopy method against the Delta method on this problem.

We refer the reader to \cite[Section~2]{PLESTENJAK2015585} for a discussion of how a coupled system of two-point boundary value problems, representing Mathieu's angular and radial equations, yields a two-parameter eigenvalue problem (thus $k =2$) upon Chebyshev collocation discretization. The dimensions of the matrices $n_1$ and $n_2$ correspond to the number of points used in the discretization.

In Figure~\ref{fig:Application_1_Mathieu}, we present  accuracy result for a two-parameter eigenvalue problem with $n_1 = 18$ and $n_2 =38$ coming from a Mathieu system. The horizontal axis represents the $n_1n_2 = 684$ eigenvalues, ordered from the smallest to largest by the norm of $\Lam_1$.
\begin{figure}[htb]
\centering
\caption{The \blue{$\log_{10}$} backward \blue{errors} (vertical axis) of fiber product method \blue{(blue)} and  Delta method \blue{(red)} plotted against eigenvalues (horizontal axis) ordered from smallest to largest.
Left figure: $81$ Newton iterations, taking \blue{$19$ seconds}. Right figure: \blue{$25$} Newton iterations \blue{at the end of the path}, taking \blue{$16$ seconds}.
}
\includegraphics[trim={0ex, 0ex, 0ex, 0ex}, clip,width=0.49\textwidth]{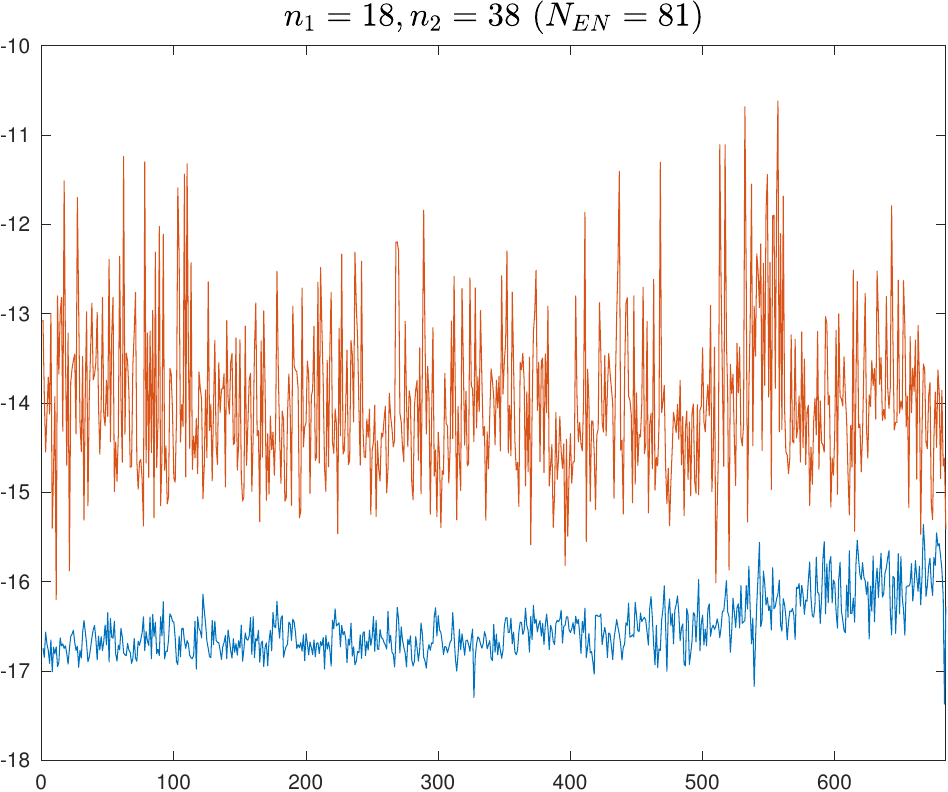}
\includegraphics[trim={0ex, 0ex, 0ex, 0ex}, clip,width=0.49\textwidth]{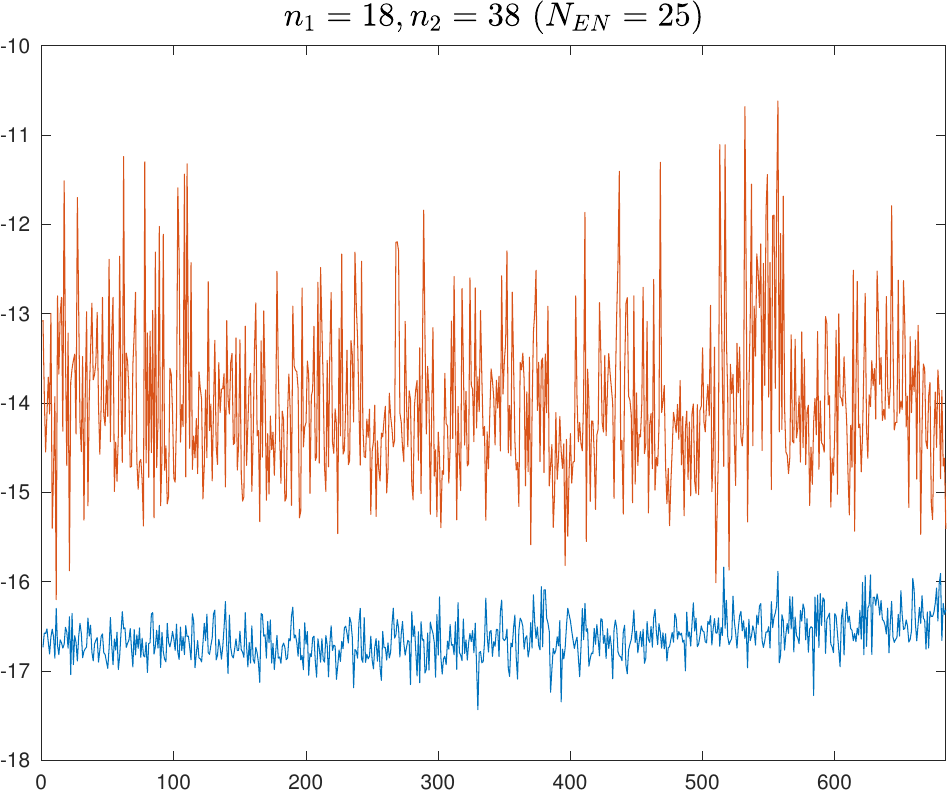}
\label{fig:Application_1_Mathieu}
\end{figure}

The vertical axis measures backward errors on a log scale. The blue plot shows the backward error of our fiber product homotopy method whereas the red plot is that for the Delta method. With few exceptions, fiber product homotopy produces significantly more accurate results than the Delta method by orders of magnitude.  Indeed, every eigenpair computed with our method here has a backward error that is less than $10^{-15}$.

The left and right figures in Figure~\ref{fig:Application_1_Mathieu} differ by \blue{$N_{\textsc{en}}$,} the number of Newton iterations used in the Newton step \blue{at the end of the path} --- the left plot uses the \blue{conservative} stopping condition of
$81=\max(20,k\max\{n_1,\dots,n_k\}+5)$ iterations whereas the right plot uses an early stopping condition of \blue{$25$} iterations. We see no discernible difference in the backward errors but the left plot took \blue{19 seconds with 28 threads} whereas the right plot took only \blue{16 seconds} to compute. This suggests that there is room for further fine-tuning to improve speed without sacrificing stability.

We next employ Shub--Smale $\alpha$-theory \cite{BCSS1998} to certify the quadratic convergence of the Newton steps in a neighborhood of the end point. Briefly, in this theory there is a function $\alpha$ that takes a polynomial system $f \colon \mathbb{C}^{n}\to\mathbb{C}^n$ and an approximate solution $z\in\mathbb{C}^n$ as its input and returns a positive real number.
If $\alpha(f,z)$ is less than the constant  $(13-3\sqrt{17})/4$, then one has certified quadratic convergence of Newton's method on the approximate solution $z$ to the polynomial system $f$ \cite[Theorem~2, p.~160]{BCSS1998}.
With our default tolerances we  are able to certify the smallest $550$ of the $684$ eigenvalues --- see Figure~\ref{fig:Certification_N20}.
\begin{figure}[htb]
\centering
\caption{
Computed bound (vertical axis) on $\log_{\blue{10}} \alpha$ for approximate eigenpairs of Mathieu MEP plotted against eigenvalues (horizontal axis) ordered from smallest to largest.
 If the computed bound is below $\log_{\blue{10}} [(13-3\sqrt{17})/4]$ (red dashed line), the eigenpair is certified.
Left figure: $81$ Newton iterations. Right figure: $25$ Newton iterations.
}
\includegraphics[trim={0ex, 0ex, 0ex, 0ex}, clip,width=0.49\textwidth]{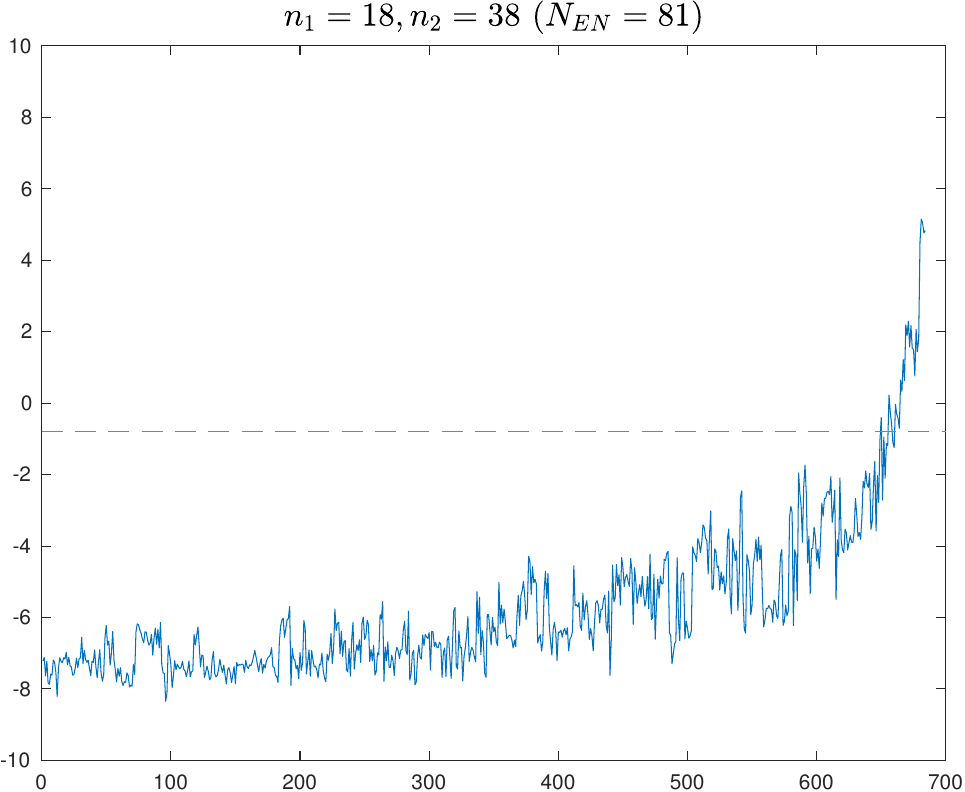}
\includegraphics[trim={0ex, 0ex, 0ex, 0ex}, clip,width=0.49\textwidth]{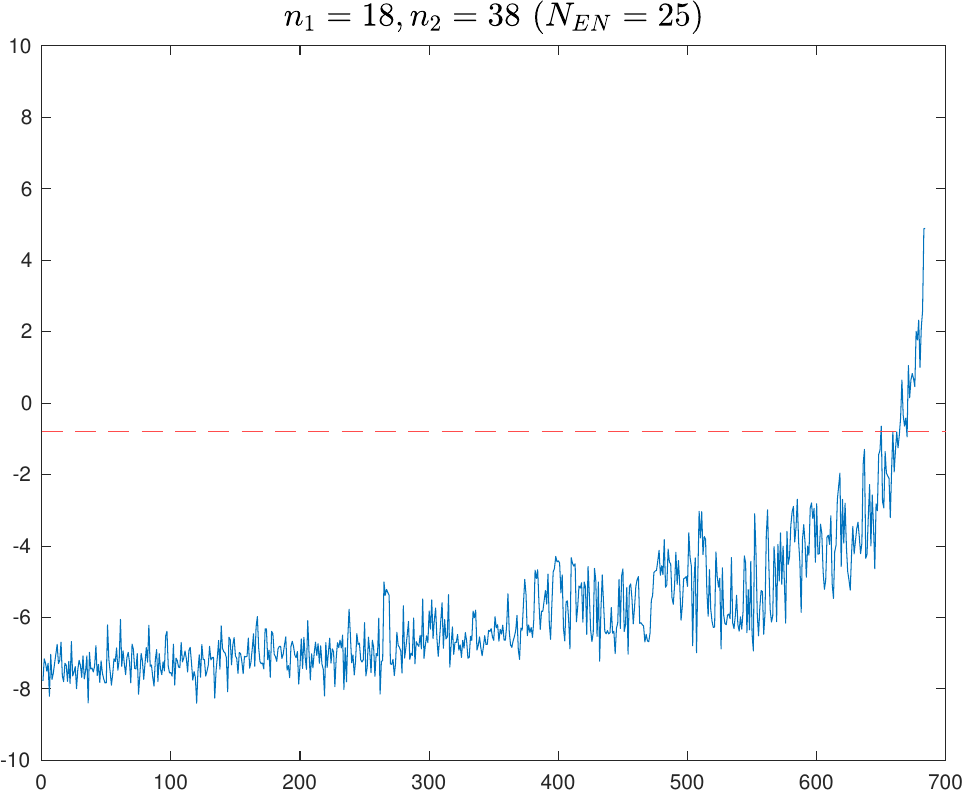}
\label{fig:Certification_N20}
\end{figure}

\section{Numerical results III: \blue{Dimension-deficient} singular MEPs}\label{sec:sing}

One of the thorniest issues in solving MEPs is singularity --- a \emph{singular} MEP  \cite[Section~2]{Hochstenbach2012} is one with \blue{singular Delta matrix}. They have fewer than the expected $n_1\cdots n_k$ eigenpairs; in particular, they are not regular.
That a singular $k$-parameter eigenvalue problem presents computational difficulties is already evident when $k = 1$: a GEP with a singular matrix pencil $A - \lambda B$ is well-known to be challenging computationally. \blue{Among the singular MEPs, we shall impose a further challenge of \emph{dimension-deficiency}, i.e., where the intrinsic dimensions $(d_1,\dots,d_k)<(n_1,\dots,n_k)$.}

\blue{Such dimension-deficient singular} MEPs are inevitable when one linearizes \emph{quadratic multiparameter eigenvalue problems (QMEPs)} \cite{Hochstenbach2012}: Given quadratic polynomial matrices
\begin{equation}\label{eq:qmepQ}
\begin{aligned}
Q_{1}(\lambda,\mu) & \coloneqq   B_{00}+  \lambda B_{10}+\mu B_{01}+\lambda^{2}B_{20}+\lambda\mu B_{11}+\mu^{2}B_{02},\\
Q_{2}(\lambda,\mu) & \coloneqq   C_{00}+  \lambda C_{10}+\mu C_{01}+\lambda^{2}C_{20}+\lambda\mu C_{11}+\mu^{2}C_{02},
\end{aligned}
\end{equation}
where $B_{ij} \in\mathbb{C}^{n_1 \times n_1}$ and $C_{ij} \in \mathbb{C}^{n_2 \times n_2}$, $i, j=0,1,2$,
solve
\[
Q_1(\lambda,\mu)\xv_1=0,\qquad Q_2(\lambda,\mu)\xv_2=0
\]
for all possible $\lambda,\mu \in \mathbb{C}$ and nonzero $\xv_1 \in \mathbb{C}^{n_1}$, $\xv_2 \in \mathbb{C}^{n_2}$. It is straightforward to generalize this to a quadratic $k$-parameter eigenvalue problem but we will only study the case $k = 2$ here.

The quadratic two-parameter eigenvalue problem is mathematically equivalent to a  two-parameter eigenvalue problem \cite{Hochstenbach2012}:
\begin{equation}\label{eq:qmepH}
\begin{aligned}
H_{1}(\lambda,\mu) & =  \begin{bmatrix}
B_{00} & B_{10} & B_{01}\\
0 & -I & 0\\
0 & 0 & -I
\end{bmatrix}+\lambda\begin{bmatrix}
0 & B_{20} & B_{11}\\
I & 0 & 0\\
0 & 0 & 0
\end{bmatrix}+\mu\begin{bmatrix}
0 & 0 & B_{02}\\
0 & 0 & 0\\
I & 0 & 0
\end{bmatrix},\\
H_{2}(\lambda,\mu) & =  \begin{bmatrix}
C_{00} & C_{10} & C_{01}\\
0 & -I & 0\\
0 & 0 & -I
\end{bmatrix}+\lambda\begin{bmatrix}
0 & C_{20} & C_{11}\\
I & 0 & 0\\
0 & 0 & 0
\end{bmatrix}+\mu\begin{bmatrix}
0 & 0 & C_{02}\\
0 & 0 & 0\\
I & 0 & 0
\end{bmatrix}.
\end{aligned}
\end{equation}
Whereas the original coefficient matrices in $Q_i$ are in $\mathbb{C}^{n_i\times n_i}$,
the  coefficient matrices of $H_i$ are in $\mathbb{C}^{3n_i\times 3n_i}$, $i =1,2$. However, the real catch is that the two-parameter eigenvalue problem \eqref{eq:qmepH} is singular \blue{and} its intrinsic dimension $(2n_1,2n_2)$ is strictly smaller than $(3n_1, 3n_2)$.

In our experiment, we set $n_1 = n_2 = n$, randomly generate $B_{ij},C_{ij} \in \mathbb{C}^{n \times n}$, and compare fiber product and diagonal coefficient homotopy methods in \textsc{Bertini}.
The difficulty of \blue{dimension-deficient} MEP is conspicuously reflected in our observed results: The diagonal coefficient homotopy method has $9n^2$ start solutions and $5n^2$ of the paths tracked does not
converge for all values of $n$ we tested, which we recorded in Table~\ref{table:BertiniTime}. On the other hand, our fiber product homotopy method did not produce a single divergent path.
\blue{For $n=40$, the wall time of the diagonal coefficient homotopy method in \textsc{Bertini} is estimated based on the number of paths it tracked when the fiber product homotopy method finishes.}
\begin{table}[h]
\centering
\caption{Number of divergent paths and elapsed timings (in seconds) for the diagonal coefficient (DC), fiber product (FP) homotopy methods, and Delta method ($\Delta$M).  $t_{\textsc{path}}$ is the time taken to track a single path, i.e., roughly how fast FP would have run in parallel.
}
{\footnotesize
\label{table:BertiniTime}
\setlength{\extrarowheight}{2pt}
\begin{tabular}{c|c|c|c|c|c|c|c|c|c}
$n$ & $N$ & \multicolumn{3}{c|}{\# of divergent paths} & \multicolumn{4}{c|}{Wall time} & $t_{\textsc{path}}$ \tabularnewline
\hline
 & $4n^2$ & \multicolumn{2}{c|}{\textsc{Bertini}} & \textsc{Matlab} & \multicolumn{2}{c|}{\textsc{Bertini}} & \multicolumn{3}{c}{\textsc{Matlab}}
\tabularnewline
\hline
 &  & DC &  FP & FP &  DC &  FP & $\Delta$M & \multicolumn{2}{c}{FP} \tabularnewline
\hline
\hline
2 & 16 & 20 & 0 &0& \blue{0.49} & \blue{0.38} &  \blue{0.07} & \blue{0.55} & \blue{0.19} \tabularnewline
\hline
5 & 100 & 125 & 0 &0& \blue{23.72} & \blue{8.60} & 0.11 & \blue{1.36} & \blue{0.22}\tabularnewline
\hline
10 & 400 & 500 & 0 &0& \blue{801.24} & \blue{244.01} & \blue{0.95} & \blue{5.41} & \blue{0.30}\tabularnewline
\hline
20 &  1600& 2000 & 0 &0& \blue{38761.58} & \blue{14079.69} & \blue{24.98} & \blue{56.56} & \blue{0.79} \tabularnewline
\hline
40 & 6400 & 8000 & 0 &0& \blue{\textsc{33 days}} & \blue{\textsc{8 days}}  & \blue{1156.38} & \blue{1133.90} & \blue{4.40}
\end{tabular}}
\end{table}

We next compare fiber product homotopy and Delta methods \cite{MP2010} in \textsc{Matlab}. From the backward errors in Figure~\ref{fig:Singular system} for \blue{dimension-deficient} singular MEPs, we see a similar pattern as in the nonsingular MEPs in Figure~\ref{fig:Random system} --- fiber product homotopy method is more accurate than the Delta method by orders of magnitude. More importantly,  Delta method loses accuracy as $n$ increases but fiber product homotopy method maintains it at $10^{-16}$ even after $n$ is doubled four times.

From the last three columns of  Table~\ref{table:BertiniTime}, we see that while the Delta method beats fiber product homotopy method in terms of speed for smaller values of $n$, the situation reverses when $n = 40$.  Furthermore, the value of $t_{\textsc{path}}$ in the last column indicates that had we run  fiber product homotopy method in
parallel on a machine with sufficiently many cores,  all eigenpairs can be obtained in seconds. On the other hand, \blue{even though the program \texttt{eig} in \textsc{Matlab} has the ability to use multiple cores and processors for the Delta method, parallelization is not straightforward as the problem involves $k$ large Delta matrices of sizes $n_1\cdots n_k \times n_1\cdots n_k $ that need to be formed and stored in advance. As such, the Delta method is infeasible for large matrices.}

\begin{figure}[htb]
\centering
\caption{The \blue{$\log_{10}$} backward errors (vertical axis) of fiber product homotopy (blue) and Delta methods (red) for the \blue{dimension-deficient} singular MEP \eqref{eq:qmepH}, plotted against eigenvalues (horizontal axis) ordered in increasing norm from left to right.}
\includegraphics[trim={0ex, 0ex, 0ex, 0ex}, clip,width=0.49\textwidth]{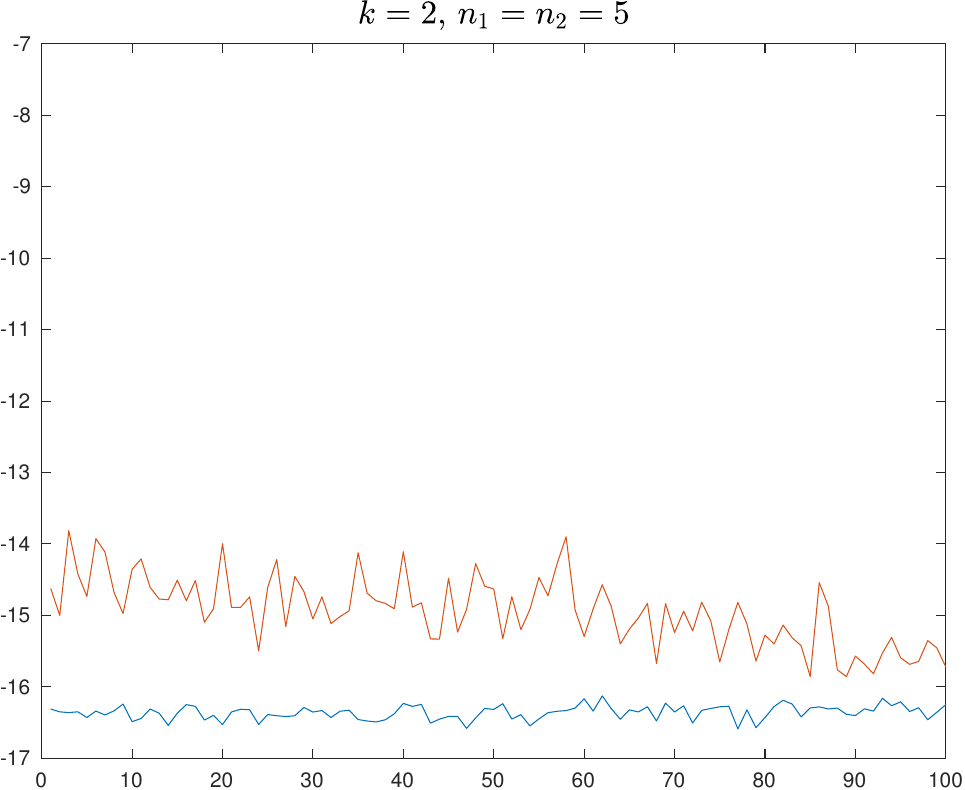}
\includegraphics[trim={0ex, 0ex, 0ex, 0ex}, clip,width=0.49\textwidth]{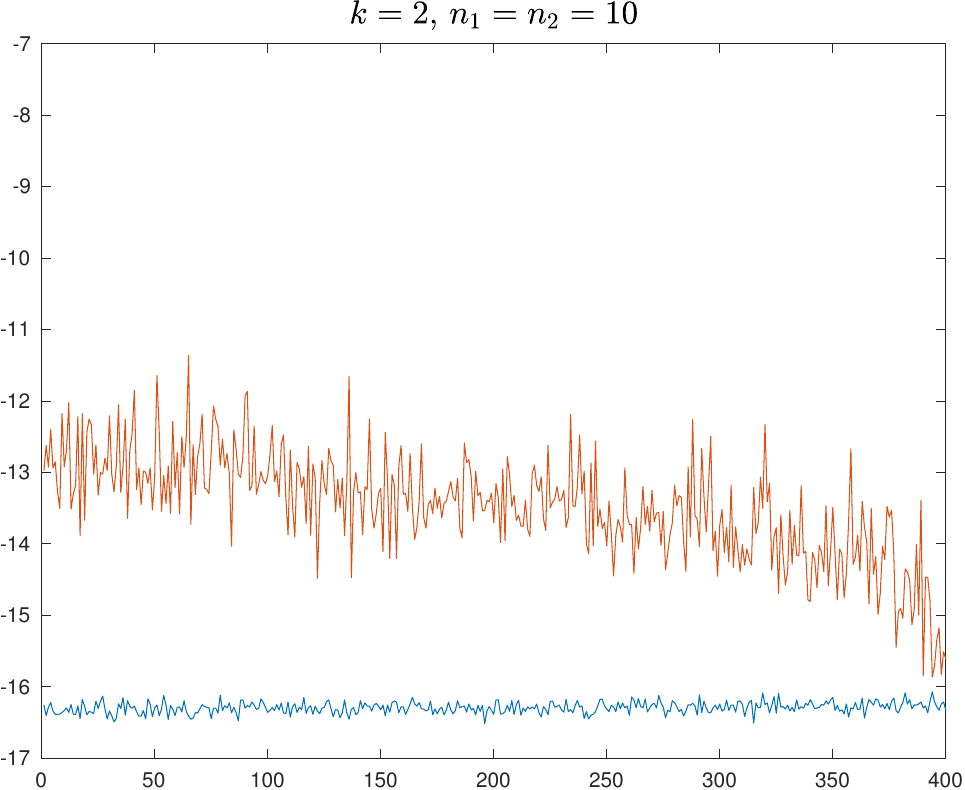}
\includegraphics[trim={0ex, 0ex, 0ex, 0ex}, clip,width=0.49\textwidth]{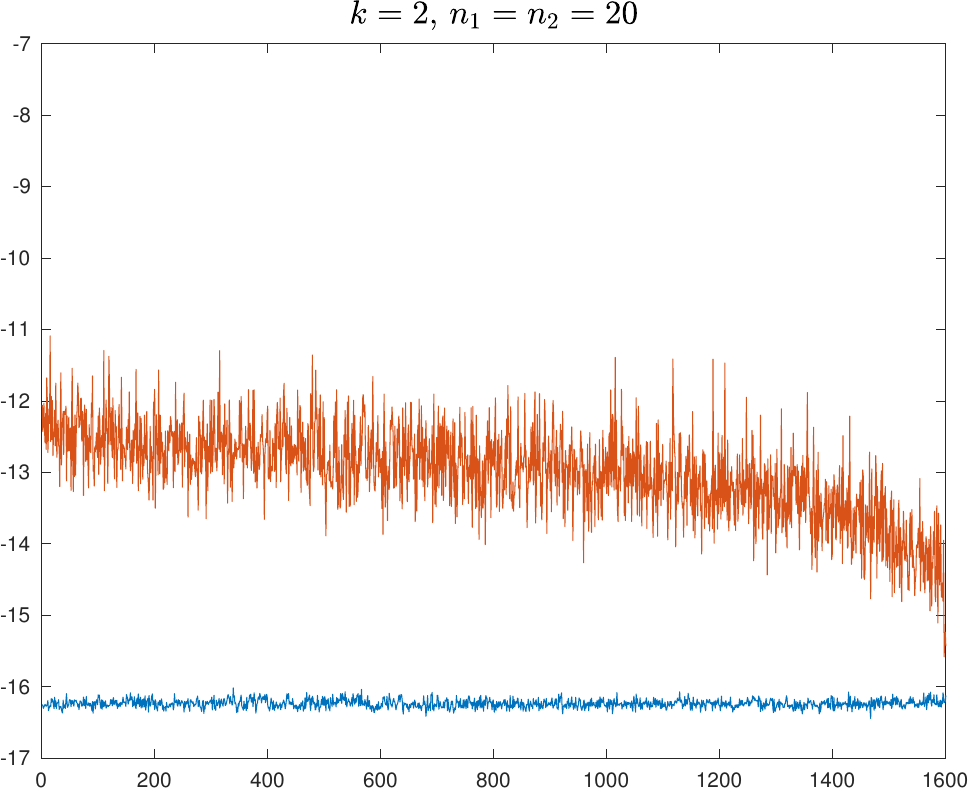}
\includegraphics[trim={0ex, 0ex, 0ex, 0ex}, clip,width=0.49\textwidth]{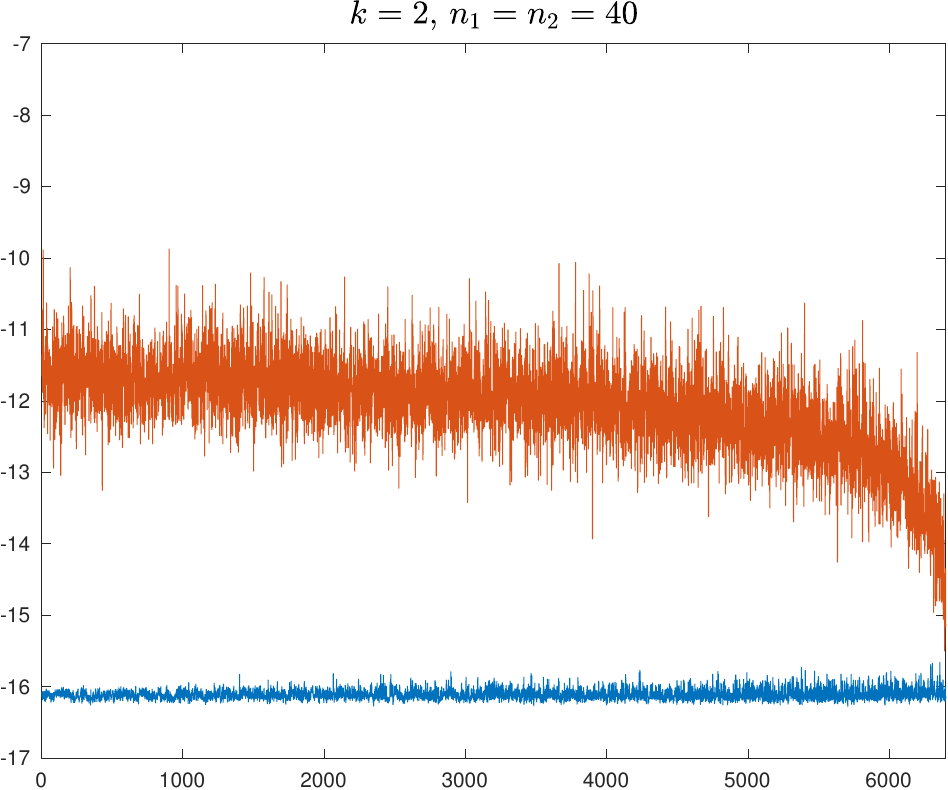}
\label{fig:Singular system}
\end{figure}

\section{Conditioning of the fiber product MEP and standard MEP}\label{sec:condition}

Recall that condition number \cite[Chapter~14]{ConditionBook} depends on the \emph{problem}  --- two different problems that always give the same solution will in general have different condition numbers. The condition number for MEP  as defined in \cite{HP2003} applies to the standard formulation of MEP in \eqref{eq:mep1}. Since we are solving a different problem \eqref{eq:equivMEP}, or more generally \eqref{eq:mep3}, albeit one that gives the same solutions as \eqref{eq:mep1}, it will have a different condition number.

In this section, we discuss the condition number for the fiber product MEP \eqref{eq:equivMEP} and compare it to the condition number for the standard  MEP as defined in \cite{HP2003}. They are expected to be quite different by the common interpretation of condition number as measuring the change in output produced by a change in input. The standard condition number for MEP in \cite{HP2003}  measures the change in eigenvalues (output) produced by a change in the coefficient matrices (input).  In fiber product homotopy method,
we do not deform these coefficient matrices; instead, the relevant notion of condition number is one that measures the change in eigenvalues (output) produced by a change in the linear space defined by $G_1 = \dots = G_k = 0$ as in \eqref{eq:mep3}  (input). So in our setting, we require the condition number of a variety intersected with a varying linear subspace.

In the following, by a \emph{projective linear subspace} of a projective space, we mean
$\{ \pi(\xv) \in \mathbb{P}^n : M\xv = 0, \; \xv \ne 0 \}$  for some $M \in \mathbb{C}^{m \times (n+1)} $ and where $\pi \colon \mathbb{C}^{n+1} \setminus \{0\} \to \mathbb{P}^n$ is the canonical projection. Henceforth,  we will write $\mathbb{M} \coloneqq \pi \bigl(\ker(M)  \setminus \{0\}\bigr)$ for the projective linear subspace that $M \in \mathbb{C}^{m \times (n+1)} $ defines in $ \mathbb{P}^n$.
We write $[x_0 \colon x_1 \colon \dots \colon x_{n}]$ for homogeneous coordinates in $\mathbb{P}^n$.

\subsection{Intersecting a variety with a varying linear subspace}\label{ss:kappaPeter}

We briefly review some relevant ideas in \cite{Peter2017}, on which our condition number in Section~\ref{ss:kappaFP} is based.
Let $\mathcal{Z}$ be a degree-$p$ variety in $\mathbb{P}^n$ of codimension $n-d$. The \emph{Hurwitz variety} of $\mathcal{Z}$ is a subvariety of the
Grassmannian $\mathbb{G}(n-d,n)$  of $(n-d)$-dimensional projective linear subspaces in $\mathbb{P}^n$
 defined by
\[
\cHu_\mathcal{Z} \coloneqq \{\mathbb{M} \in \mathbb{G}(n-d,n) :  \mathcal{Z}\cap \mathbb{M} \; \text{does not consist of $p$ reduced points}\}.
\]
The Hurwitz variety  is an irreducible hypersurface defined by a polynomial called the \emph{Hurwitz form}  in the coordinate ring of the Grassmannian \cite{Sturmfels2017}.
This variety can be regarded as the set of ill-posed instances of
the problem of intersecting a variety by a varying linear space \cite{Peter2017}. By \cite[Definition~1.1 and Theorem~1.4]{Peter2017}, we have the following definition.
\begin{definition}\label{def:intersectionConditionNumber}
Let $\mathcal{Z}$ be a  $d$-dimensional irreducible projective variety in $\mathbb{P}^n$ and  $\mathbb{M}\in\mathbb{G}(n-d,n)$. Let $\zv\in\mathcal{Z} \cap \mathbb{M}$
and $\alpha$ be the minimum angle between the tangent spaces $\mathsf{T}_{\zv}( \mathcal{Z})$ and $\mathsf{T}_{\zv} (\mathbb{M})$.
The \emph{intersection condition number} of $\mathbb{M}$ at $\zv$ with respect to $\mathcal{Z}$ is
\[
\kappa_\mathcal{Z}(\mathbb{M},\zv) \coloneqq
\frac{1}{\sin\alpha}
\]
if $\zv$ is a smooth point of $\mathcal{Z}$ and $\mathbb{M}$ intersects $\mathcal{Z}$ transversally at $\zv$, and $\kappa_\mathcal{Z}(\mathbb{M},\zv) \coloneqq \infty$ otherwise.
\end{definition}
The elements of $\cHu_\mathcal{Z}$ are precisely the projective linear subspaces where the intersection condition number is infinite  \cite[Theorem~1.6]{Peter2017}.

\subsection{Condition number of the fiber product MEP}\label{ss:kappaFP}

Consider the multiparameter eigenvalue variety $\EV(H_1,\dots,H_k) \subseteq \mathbb{C}^{k^2}$ in \eqref{eq:EVV}. Throughout this section, $\mathcal{Z} \subseteq \mathbb{P}^{k^2}$ will always denote the  projective variety
\begin{equation}\label{eq:projectiveEV}
\mathcal{Z} \coloneqq
\overline{
\{
[1 \colon \Lam_1 \colon  \dots \colon \Lam_k] \in \mathbb{P}^{k^2} \colon
(\Lam_1,\dots,\Lam_k)\in \EV(H_1,\dots,H_k)
\}},
\end{equation}
where we have denoted the homogeneous coordinates of $\mathbb{P}^{k^2}$ by
\[
[\lambda_0 \colon \underbrace{\lambda_{11} \colon \dots \colon \lambda_{1k}}_{\Lam_1} \colon \dots \colon  \underbrace{\lambda_{k1} \colon \dots \colon \lambda_{kk}}_{\Lam_k}],
\]
with $\lambda_0$ as the homogenizing coordinate.
The defining equations of $\mathcal{Z}$ are homogeneous polynomials in $\lambda_0,\Lam_1,\dots,\Lam_k$ such that  setting  $\lambda_0 =1$ gives the defining equations for $\EV(H_1,\dots,H_k)$ in \eqref{eq:EVV}

We will apply the notion of an intersection condition number in Section~\ref{ss:kappaPeter} to define a condition number for fiber product MEP.
\begin{definition}\label{def:condFP}
Given a fiber product multiparameter eigenvalue problem \eqref{eq:equivMEP}, let  $\mathcal{Z}$ be the projective variety in \eqref{eq:projectiveEV}, $\mathbb{M}_\FP $ be the projective linear space
\begin{align}
\mathbb{M}_\FP &\coloneqq \overline{\{ [1 \colon \Lam_1 \colon \dots \colon \Lam_k]\in\mathbb{P}^{k^2} :  \Lam_1=\cdots=\Lam_k\}} \notag \\
&= \{ [\lambda_0 \colon \Lam_1 \colon \dots \colon \Lam_k]\in\mathbb{P}^{k^2} : \Lam_1=\cdots=\Lam_k\}, \label{eq:MFP}
\end{align}
and $\zv $ be the point
\[
\zv \coloneqq [1\colon \Lam_1\colon \dots\colon \Lam_k] \in\mathcal{Z}\cap\mathbb{M}_\FP.
\]
The \emph{condition number of the fiber product MEP} \eqref{eq:equivMEP} is given by
\[
\kappa_\FP(\Lam_1,\dots,\Lam_k,H_1,\dots,H_k) \coloneqq \kappa_\mathcal{Z}(\mathbb{M}_\FP,\zv).
\]
\end{definition}
Note that $\mathcal{Z}\cap\mathbb{M}_\FP$ consists precisely of points $[1\colon \Lam_1\colon \dots\colon \Lam_k] \in\mathbb{P}^{k^2}$ where $(\Lam_1,\dots,\Lam_k )\in \mathbb{C}^{k^2}$ are the eigenvalues of the MEP.
We next show how to compute $ \kappa_\mathcal{Z}(\mathbb{M},\zv) $ for any projective linear subspace $\mathbb{M}$ --- as we will see later, for fiber product homotopy method, we would also be interested in $ \kappa_\mathcal{Z}(\mathbb{M},\zv) $ for projective linear subspaces $\mathbb{M}$ other than $\mathbb{M}_\FP$.
\begin{theorem}\label{theorem:JacZEV}
Let  $A_{ij} \in \mathbb{C}^{n_i \times n_i}$, $i=1,\dots,k$,
$j=0,1,\dots,k$, be the matrices of an MEP as in Definition~\ref{definition:MEP}.
Let  $\mathcal{Z}$ be the projective variety in \eqref{eq:projectiveEV}, $\mathbb{M}=\pi(\ker(M)\setminus\{0\})$ for some $M\in\mathbb{C}^{(k^2-k)\times(k^2+1)}$, and
$\zv = [1 \colon \Lam_1 \colon \dots\colon \Lam_k] \in\mathcal{Z}\cap\mathbb{M}$
be a smooth point of $\mathcal{Z}$. If $\xv_i,\yv_i \in\mathbb{C}^{n_i}$ are nonzero vectors\footnote{These are respectively right and left eigenvectors of the polynomial matrix.} such that
\[
H_i(\Lam_i)\xv_i = 0, \qquad H_i(\Lam_i)^\tp\yv_i = 0, \qquad i=1,\dots,k,
\]
then the intersection condition number
\[
 \kappa_\mathcal{Z}(\mathbb{M},\zv) = \frac{1}{\sin \alpha},
\]
where $\alpha$ is the minimum angle between $\mathbb{M}$
and $\mathbb{J} =\pi\bigl(\ker(J)  \setminus \{0\}\bigr)$,
\begin{equation}\label{eq:jacZ}
J \coloneqq
\begin{bmatrix}
\yv_1^\tp A_{10}\xv_1   & \yv_1^\tp B_1(\xv_1)\\
\yv_2^\tp A_{20}\xv_1   && \yv_2^\tp B_2(\xv_2)\\
\vdots&&&\ddots\\
\yv_k^\tp A_{k0}\xv_k&&&&\yv_k^\tp B_k(\xv_k)
\end{bmatrix}\in \mathbb{C}^{k\times(1+k^2)}
\end{equation}
with  $B_i(\xv_i) \coloneqq - [A_{i1}\xv_i,\dots,A_{ik}\xv_i] \in \mathbb{C}^{n_i \times k}$, $i =1,\dots,k$.
\end{theorem}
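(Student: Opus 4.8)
The plan is to reduce the computation of $\kappa_{\mathcal{Z}}(\mathbb{M},\zv)$ to an explicit description of the tangent space $\mathsf{T}_{\zv}(\mathcal{Z})$, and then observe that $\mathbb{J}$ as defined in \eqref{eq:jacZ} is exactly the (projectivized) tangent space at $\zv$. Once that is established, Definition~\ref{def:intersectionConditionNumber} gives $\kappa_{\mathcal{Z}}(\mathbb{M},\zv) = 1/\sin\alpha$ where $\alpha$ is the minimum angle between $\mathsf{T}_{\zv}(\mathcal{Z})$ and $\mathsf{T}_{\zv}(\mathbb{M})$; but $\mathbb{M}$ is itself linear, so $\mathsf{T}_{\zv}(\mathbb{M}) = \mathbb{M}$, and the angle between $\mathbb{M}$ and $\mathbb{J}$ is precisely $\alpha$. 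The only real content is therefore the identification of the tangent space of $\mathcal{Z}$ with $\ker(J)$.

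The key steps, in order. First, I would work in the affine chart $\lambda_0 = 1$, so that near $\zv$ the variety $\mathcal{Z}$ is cut out (set-theoretically, and since $\zv$ is a smooth point, also scheme-theoretically in a neighborhood) by the $k$ equations $\det H_i(\Lam_i) = 0$, $i = 1,\dots,k$, in the variables $\Lam_1,\dots,\Lam_k \in \mathbb{C}^{k^2}$ — these are the defining equations of $\EV(H_1,\dots,H_k)$ from \eqref{eq:EVV}, and they involve disjoint sets of variables. Second, I would compute the gradient of $f_i(\Lam_i) \coloneqq \det H_i(\Lam_i)$ at $\Lam_i$. Using Jacobi's formula for the derivative of a determinant, $\tfrac{\partial}{\partial \lambda_{ij}} \det H_i(\Lam_i) = \operatorname{tr}\bigl(\operatorname{adj}(H_i(\Lam_i)) \cdot \tfrac{\partial H_i}{\partial \lambda_{ij}}\bigr)$, and since $\tfrac{\partial H_i}{\partial \lambda_{ij}} = -A_{ij}$ for $j \ge 1$ while $\tfrac{\partial H_i}{\partial \lambda_{i0}}$ corresponds to $A_{i0}$ in the homogenization. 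Third — and this is the crucial linear-algebra observation — at a point where $H_i(\Lam_i)$ has a one-dimensional kernel (which holds for regular eigenpairs, and is where $\zv$ being a smooth point of $\mathcal{Z}$ is used), $\operatorname{adj}(H_i(\Lam_i))$ is a rank-one matrix proportional to $\xv_i \yv_i^{\tp}$ up to a nonzero scalar, where $\xv_i, \yv_i$ are the right and left null vectors. Hence $\tfrac{\partial}{\partial \lambda_{ij}} \det H_i(\Lam_i)$ is, up to that common nonzero scalar, equal to $\yv_i^{\tp}(\tfrac{\partial H_i}{\partial \lambda_{ij}})\xv_i$, i.e. $-\yv_i^{\tp} A_{ij}\xv_i$ for $j \ge 1$ and $\yv_i^{\tp} A_{i0}\xv_i$ for the homogenizing coordinate. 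Assembling these gradients (one per $i$, each supported on the block of coordinates $(\lambda_0, \Lam_i)$ — noting the $\lambda_0$ column is shared) into a matrix recovers exactly $J$ in \eqref{eq:jacZ}, after rescaling each row by the nonzero scalar, which does not change the kernel. Therefore the affine tangent space $\mathsf{T}_{\zv}(\mathcal{Z})$ is $\ker(J)$ (viewed appropriately in homogeneous coordinates after rehomogenizing via the Euler relation), i.e. $\mathsf{T}_{\zv}(\mathcal{Z}) = \mathbb{J}$. Finally, plug this into Definition~\ref{def:intersectionConditionNumber}: transversality of $\mathbb{M}$ and $\mathcal{Z}$ at $\zv$ is equivalent to $\mathbb{M}$ and $\mathbb{J}$ spanning everything, and in that case $\kappa_{\mathcal{Z}}(\mathbb{M},\zv) = 1/\sin\alpha$ with $\alpha$ the minimum angle between $\mathbb{M}$ and $\mathbb{J}$; when transversality fails the angle is zero so $1/\sin\alpha = \infty$, matching the definition. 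This closes the argument.

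I expect the main obstacle to be the bookkeeping around homogeneous versus affine coordinates: $\mathcal{Z}$ lives in $\mathbb{P}^{k^2}$, the condition number is defined in terms of tangent spaces to projective linear subspaces, but the cleanest computation of the determinantal gradients happens in the affine chart $\lambda_0 = 1$. One must check that the column of $J$ indexed by $\lambda_0$, namely the $\yv_i^{\tp} A_{i0}\xv_i$ entries, is exactly what homogenization of $\det H_i$ produces, and that passing between $\ker(J) \subseteq \mathbb{C}^{1+k^2}$ and the projective tangent space $\mathbb{J} = \pi(\ker(J)\setminus\{0\})$ is compatible with the normalization of angles used in \cite{Peter2017}. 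A secondary subtlety is justifying that a regular/smooth point $\zv$ indeed forces $\dim\ker H_i(\Lam_i) = 1$ for each $i$, so that the adjugate is genuinely rank one — this is where the hypothesis that $\zv$ is a smooth point of $\mathcal{Z}$ (rather than an arbitrary eigenvalue) is essential, and it should follow from the correspondence between smoothness of $\mathcal{Z}$ and simplicity of the eigenvalues, together with the fact that $\det H_i$ being square-free (genericity with respect to intrinsic dimension, Definition~\ref{definition:intrinsicDimension}) makes $\nabla f_i$ nonvanishing at smooth points.
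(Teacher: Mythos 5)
Your argument is correct but reaches the conclusion by a genuinely different route than the paper. The paper never touches $\det H_i$ or Jacobi's formula: it passes to the projective closure $\overline{\EP}(H_1,\dots,H_k)$ of the eigenpair variety, whose defining equations $\widehat{H}_i(\lambda_0,\Lam_i)\xv_i=0$ are bilinear in $(\lambda_0,\Lam_i)$ and $\xv_i$, computes the Jacobian $\Jac\boldsymbol{H}$ of this system (a block matrix whose $(\Lam,\lambda_0)$-columns are $[A_{i0}\xv_i,\,B_i(\xv_i)]$ and whose $\xv$-columns are $\widehat{H}_i$), and left-multiplies by $\diag(\yv_1^\tp,\dots,\yv_k^\tp)$ to kill the $\xv$-columns, producing exactly $[J,\;\mathbbold{O}]$. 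Since $\mathcal{Z}$ is the coordinate projection of $\overline{\EP}$, each such row annihilates $\mathsf{T}_\zv(\mathcal{Z})$, giving $\mathsf{T}_\zv(\mathcal{Z})\subseteq\mathsf{T}_\zv(\mathbb{J})$; equality then follows from a codimension count when $J$ has full rank $k$. Your approach instead works directly with $\EV(H_1,\dots,H_k)$ cut out by $\det H_i(\Lam_i)=0$, uses Jacobi's formula $\partial_{\lambda_{ij}}\det H_i=\operatorname{tr}(\operatorname{adj}(H_i)\,\partial_{\lambda_{ij}}H_i)$, and identifies $\operatorname{adj}(H_i(\Lam_i))$ as a nonzero multiple of $\xv_i\yv_i^\tp$ when $\dim\ker H_i(\Lam_i)=1$; assembling the gradients recovers $J$ row-by-row up to nonzero scalars. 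This is more explicit and more elementary (it literally computes the normal space), and it makes visible \emph{why} the left eigenvectors appear; the paper's route instead gets $\yv_i$ ``for free'' by the left-multiplication trick and sidesteps the adjugate entirely. Both arguments carry the same residual caveat, which you correctly flag at the end: one needs $\nabla\det H_i(\Lam_i)\neq 0$, equivalently row $i$ of $J$ nonzero, equivalently $\dim\ker H_i(\Lam_i)=1$, for the identification of tangent spaces to go through. The paper handles this with the explicit hedge ``when $J$ is full rank'' before the final codimension count, rather than tying it to square-freeness of $\det H_i$; you might streamline your write-up by making the same move, i.e.\ observe that $\adj H_i=0$ whenever $\dim\ker H_i\geq 2$, so row $i$ of $J$ vanishes precisely in the degenerate case, and then state the tangent-space equality under the hypothesis that $J$ has rank $k$ rather than invoking genericity with respect to intrinsic dimension.
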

\begin{proof}

By Definition~\ref{def:intersectionConditionNumber}, it suffices to show that $\mathsf{T}_{\zv}( \mathcal{Z})=\mathsf{T}_{\zv}(\mathbb{J})$.  First, recall from Definition~\ref{def:epvar} that the multiparameter eigenvalue variety $\EV(H_1,\dots,H_k)$
 is a projection of the multiparameter eigenpair variety $\EP(H_1,\dots,H_k)$.
So the projective closure of $\EP(H_1,\dots,H_k)$ in
$\mathbb{P}^{k^2}\times\mathbb{P}^{n_1-1}\times\cdots\times\mathbb{P}^{n_k-1}$, which we will denote
by $\overline{\EP}(H_1,\dots,H_k)$, projects onto the variety $\mathcal{Z}$.

For a linear polynomial matrix
\[
H(\Lam) = A_{0}-\lambda_{1}A_{1}-\dots-\lambda_{k}A_{k},
\]
we  introduce a homogenizing variable $\lambda_0$ and define a homogenized $H$ as
\[
\widehat{H}(\lambda_0, \Lam)\coloneqq \lambda_0 A_{0}-\lambda_{1}A_{1}-\dots-\lambda_{k}A_{k}.
\]
We homogenize $H_1,\dots,H_k$ in this manner, write $\wv \coloneqq (\lambda_0, \Lam_1,\dots,\Lam_k,\xv_1,\dots,\xv_k)$ for brevity, and then let
\[
\boldsymbol{H} (\wv) \coloneqq
\begin{bmatrix}
\widehat{H}_1(\lambda_0,\Lam_1)\xv_1\\
\vdots\\
\widehat{H}_k(\lambda_0,\Lam_k)\xv_k
\end{bmatrix}.
\]
Write $\boldsymbol{H} = [ h_1,\dots,h_{n_1 + \dots + n_k}]^\tp$
where $h_{i} \in \mathbb{C}[\wv]$, $i =1,\dots,n_1 +\dots + n_k$, are \emph{multihomogeneous polynomials} --- homogeneous in the variables $\lambda_0, \Lam_1,\dots,\Lam_k$ and homogeneous in each of the variables $\xv_1,\dots,\xv_k$.
Note that the $h_i$'s vanish on the variety  $\overline{\EP}(H_1,\dots,H_k)$; in fact, the set of  polynomials that vanish on $\overline{\EP}(H_1,\dots,H_k)$ is given by
\[
\{h\in\mathbb{C}[\wv] :
\lambda_0^m h=g_1h_1+\cdots +g_{n_1 + \dots +n_k} h_{n_1 + \dots +n_k}, \; g_i\in\mathbb{C}[\wv],
\; m\in\mathbb{N}
\}.
\]

The $(n_1+\dots+n_k)\times(1+k^2+n_1+\dots+n_k)$ Jacobian matrix,
\[
\begin{medsize}
\Jac \boldsymbol{H} (\wv) =
\biggl[ \frac{\partial }{\partial \lambda_0}\boldsymbol{H} , \Jac_{\Lam_1}\boldsymbol{H} , \dots,  \Jac_{\Lam_k}\boldsymbol{H} ,  \Jac_{\xv_1}\boldsymbol{H}, \dots,  \Jac_{\xv_k}\boldsymbol{H}  \biggr],
\end{medsize}
\]
when  evaluated at a  point
\[
\wv_0 \coloneqq
([1:\Lam_1:\dots:\Lam_k],\xv_1,\dots,\xv_k)=(\zv,\xv_1,\dots,\xv_k)\in\overline{\EP}(H_1,\dots,H_k),
\]
has the form
\[
\begin{medsize}
\Jac \boldsymbol{H} (\wv_0) =
\begin{bmatrix}
 A_{10}\xv_1   &  B_1(\xv_1)&&&&\widehat{H}_1(1,\Lam_1)\\
 A_{20}\xv_1   &&  B_2(\xv_2)&&&&\widehat{H}_2(1,\Lam_2)\\
\vdots&&&\ddots&&&&\ddots\\
 A_{k0}\xv_k&&&& B_k(\xv_k)&&&&\widehat{H}_k(1,\Lam_k)
\end{bmatrix}.
\end{medsize}
\]
For any $L\in\mathbb{C}^{m\times(k^2+1)}$,
if each row of the matrix
$[L,\mathbbold{O}_{m\times(n_1+\cdots+n_k)}]\in\mathbb{C}^{m\times(k^2+1+n_1+\cdots+n_k)}$ is in the row space of $\Jac \boldsymbol{H} (\wv_0)$,
then $\mathbb{L} =\pi\bigl(\ker(L)  \setminus \{0\}\bigr)$ is such that
$\mathsf{T}_{\zv}( \mathcal{Z})\subseteq \mathsf{T}_{\zv}(\mathbb{L})$.
This is because $\mathcal{Z}$ is the projection of $\overline{\EP}(H_1,\dots,H_k)$ to $\mathbb{P}^{k^2+1}$ in the first $k^2+1$ coordinates.
With this in mind, we multiply $\Jac \boldsymbol{H} (\wv_0)$ on the left by
\[
\begin{medsize}
\begin{bmatrix}
 \yv_1^\tp\\
& \yv_2^\tp\\
&& \ddots\\
&&& \yv_k^\tp
\end{bmatrix}\in \mathbb{C}^{k\times(n_1+\dots+n_k)}
\end{medsize}
\]
to obtain the $k\times(1+k^2+n_1+\dots+n_k)$ matrix
\[
\begin{medsize}
\begin{bmatrix}
\yv_1^\tp A_{10}\xv_1   & \yv_1^\tp B_1(\xv_1) &&&&\mathbbold{O}_{n_1+\dots+n_k}^\tp\\
\yv_2^\tp A_{20}\xv_1   && \yv_2^\tp B_2(\xv_2)&&&\mathbbold{O}_{n_1+\dots+n_k}^\tp\\
\vdots&&&\ddots&&\vdots\\
\yv_k^\tp A_{k0}\xv_k&&&&\yv_k^\tp B_k(\xv_k)&\mathbbold{O}_{n_1+\dots+n_k}^\tp
\end{bmatrix} = [J, \;\mathbbold{O}_{k \times (n_1+\dots+n_k)}],
\end{medsize}
\]
showing that $ \mathsf{T}_{\zv}( \mathcal{Z})\subset\mathsf{T}_{\zv}(\mathbb{J})$.
When $J$ is full rank, the codimensions
\[
\codim \mathsf{T}_{\zv}(\mathbb{J}) = k = \codim \mathsf{T}_{\zv}( \mathcal{Z})
\]
when $\zv$ is a smooth point. It follows that $\mathsf{T}_{\zv}( \mathcal{Z})=\mathsf{T}_{\zv}(\mathbb{J})$.
\end{proof}

We show how we obtain the condition number $ \kappa_\FP(\Lam_1,\dots,\Lam_k,H_1,\dots,H_k)$.  Consider the following one-parameter family of projective linear spaces induced by a one-parameter family of matrices:
For a fixed $t \in [0,1]$, let
\begin{equation}\label{eq:Mtwiddle}
\begin{medsize}
M_t\coloneqq
(1-t)\begin{bmatrix}
-\mathbbold{1}_{k-1} & \Jac L_{1}\\
-\mathbbold{1}_{k-1} &  & \Jac L_{2}\\
\vdots &  &  & \ddots\\
-\mathbbold{1}_{k-1} &  &  &  & \Jac L_{k}
\end{bmatrix}+t\begin{bmatrix}
\mathbbold{O}_{k-1} & \Jac G_{1}\\
\mathbbold{O}_{k-1} &  \Jac G_{2}\\
\vdots & \vdots\\
\mathbbold{O}_{k-1} &   \Jac G_{k}
\end{bmatrix}\in \mathbb{C}^{k(k-1)\times(k^2+1)},
\end{medsize}
\end{equation}
where $L_1,\dots, L_k$ are as in \eqref{eq:solveProduct} and  $G_1,\dots,G_k$ are as in \eqref{eq:defineG}. Let
\[
\mathbb{M}_t \coloneqq \pi\bigl(\ker(M_t)  \setminus \{0\}\bigr) \subseteq \mathbb{P}^{k^2}
\]
be the corresponding projective linear subspace. Note that $\mathbb{M}_t$ dehomogenizes (by setting $\lambda_0 = 1$) to the affine linear space in $\mathbb{C}^{k^2}$ defined by
\begin{alignat*}{3}
0&=(1-t)L_{1}(\Lam_{1})&&+tG_{1}(\Lam_{1},\dots,\Lam_{k}),\\
\MTFlushSpaceAbove
&\vdotswithin{=}&& \vdotswithin{+}
\MTFlushSpaceBelow
0&=(1-t)L_{k}(\Lam_{k})&&+tG_{k}(\Lam_{1},\dots,\Lam_{k}),
\end{alignat*}
where the right-hand sides are the last $k(k-1)$ linear polynomials in \eqref{eq:homotopyPQ}.
Therefore, for any $t\in[0,1)$, the projective closure of the set of solutions to $\boldsymbol{H}_\FP(\Lam_1,\dots,\Lam_k,\xv_1,\dots,\xv_k,t) =0$ defined in \eqref{eq:homotopyPQ}
projects onto $\mathcal{Z}\cap\mathbb{M}_t$.

When $t=1$, by \eqref{eq:G=} and \eqref{eq:MFP}, we have
\[
\mathbb{M}_1=\{
[\lambda_0 \colon \Lam_1 \colon \dots \colon \Lam_k] \in\mathbb{P}^{k^2} :  \Lam_1=\cdots=\Lam_k
\} = \mathbb{M}_\FP,
\]
and for $\zv = [1 \colon \Lam_1 \colon \dots\colon \Lam_k] \in\mathcal{Z}\cap \mathbb{M}_1$, we obtain, by Definition~\ref{def:condFP},
\[
\kappa_\mathcal{Z}(\mathbb{M}_1,\zv)= \kappa_\FP(\Lam_1,\dots,\Lam_k,H_1,\dots,H_k).
\]
The intersection condition number $\kappa_\mathcal{Z}(\mathbb{M}_t,\zv)$ for  $t \in [0,1)$ is also useful as it measures conditioning of the subproblems encountered during path tracking in the fiber product homotopy method. The next theorem shows that $\kappa_\mathcal{Z}(\mathbb{M}_t,\zv)$ is almost always finite and Example~\ref{eg:cond1} indicates that it is typically small.
\begin{theorem}
For any $t\in[0,1)$, $\kappa_\mathcal{Z}(\mathbb{M}_t,\zv)$ is finite with probability one.
\end{theorem}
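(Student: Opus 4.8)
The plan is to deduce this from Theorem~\ref{theorem:JacZEV} together with the gamma-trick-style genericity argument already used in the proof of Theorem~\ref{thm:chosenCorrectly}. Fix $t \in [0,1)$ and let $\zv = [1 \colon \Lam_1 \colon \dots \colon \Lam_k] \in \mathcal{Z}\cap\mathbb{M}_t$ be any point corresponding to an eigenpair arising along the homotopy. By Definition~\ref{def:intersectionConditionNumber}, $\kappa_\mathcal{Z}(\mathbb{M}_t,\zv)$ is finite precisely when $\zv$ is a smooth point of $\mathcal{Z}$ and $\mathbb{M}_t$ intersects $\mathcal{Z}$ transversally at $\zv$. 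The first task is to argue that for a generic (hence probability-one) choice of the data that is allowed to vary — namely the matrices $R_1,\dots,R_k$ defining the $G_i$'s and the affine maps $L_1,\dots,L_k$ — the relevant points $\zv$ are smooth points of $\mathcal{Z}$. Here one wants to invoke the fact that, by Theorem~\ref{thm:chosenCorrectly} (via the gamma trick, \cite[Lemma~7.1.3]{SW05}), for $t \in [0,1)$ the homotopy path stays in the smooth locus, so the corresponding points of $\mathcal{Z}$ are smooth and regular; I would phrase this as: for regular eigenpairs the projection $\overline{\EP}(H_1,\dots,H_k) \to \mathcal{Z}$ is a local isomorphism near $\wv_0$, so smoothness of $\mathcal{Z}$ at $\zv$ follows from the full-rank condition on $J$ in \eqref{eq:jacZ}, which holds generically since distinct generalized eigenvalues give distinct rows and the $\yv_i^\tp B_i(\xv_i)$ blocks are generic.

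Next, with $\zv$ a smooth point, Theorem~\ref{theorem:JacZEV} identifies $\mathsf{T}_{\zv}(\mathcal{Z}) = \mathsf{T}_{\zv}(\mathbb{J})$, where $\mathbb{J} = \pi(\ker(J)\setminus\{0\})$ and $J$ is the fixed $k \times (1+k^2)$ matrix in \eqref{eq:jacZ} determined by the $A_{ij}$'s and the eigenpair — crucially this does \emph{not} depend on $t$ or on the $R_i$'s and $L_i$'s. So transversality of $\mathbb{M}_t$ and $\mathcal{Z}$ at $\zv$ is equivalent to transversality of the two linear subspaces $\mathbb{M}_t = \pi(\ker(M_t)\setminus\{0\})$ and $\mathbb{J}$ in $\mathbb{P}^{k^2}$, i.e.\ to the condition that the stacked matrix $\left[\begin{smallmatrix} M_t \\ J\end{smallmatrix}\right] \in \mathbb{C}^{(k^2)\times(k^2+1)}$ has full rank $k^2$ (since $M_t$ contributes $k(k-1)$ rows and $J$ contributes $k$ rows, and $k(k-1)+k = k^2$). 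Equivalently, $\ker(M_t) \cap \ker(J) = 0$ inside $\mathbb{C}^{k^2+1}$, or the minimum angle $\alpha$ between $\mathbb{M}_t$ and $\mathbb{J}$ is positive, which is exactly what makes $\kappa_\mathcal{Z}(\mathbb{M}_t,\zv) = 1/\sin\alpha$ finite.

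The final and main step is the genericity argument for this full-rank condition. The determinant (of a maximal minor) of $\left[\begin{smallmatrix} M_t \\ J\end{smallmatrix}\right]$ is, for fixed $t \in [0,1)$ and fixed $J$, a polynomial in the entries of $R_1,\dots,R_k$ (the data defining the $G_i$'s) and in the coefficients of $L_1,\dots,L_k$; from \eqref{eq:Mtwiddle}, $M_t$ is an affine combination $(1-t)M_L + tM_G$ of the two blocks, so this determinant is a nonzero polynomial in that data provided it is not identically zero — and non-vanishing is witnessed by a single choice of $R_i$'s and $L_i$'s making $\left[\begin{smallmatrix} M_t \\ J\end{smallmatrix}\right]$ invertible, which exists because $J$ has full rank $k$ and one can always complete a full-rank $k$-row matrix to an invertible square matrix by appending generic rows. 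Since a nonzero polynomial vanishes only on a measure-zero (proper Zariski closed) set, transversality holds for almost every choice of the defining data, uniformly over the finitely many eigenpairs $\zv$; intersecting these finitely many probability-one events gives the claim. The subtlety I expect to be the main obstacle is the careful handling of the $t$-dependence: one must ensure that the exceptional set in $(R_i,L_i)$-space can be taken \emph{independent of $t$}, or else argue that for each fixed $t$ the statement holds, which is what is actually asserted — so I would keep $t$ fixed throughout and only at the very end note that the union over the finitely many $\zv$ of probability-one sets is still probability one. A secondary point requiring care is that the argument should be run over the regular eigenpairs (where Theorem~\ref{theorem:JacZEV} applies and $J$ has full rank); non-regular ones contribute the ``otherwise'' branch but there are only finitely many eigenvalues in play, so they do not affect a probability-one statement once one has shown the generic eigenpair is regular.
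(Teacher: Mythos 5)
Your proposal takes a different route from the paper's, and unfortunately the route has a genuine gap. The paper's proof reduces the claim to Theorem~\ref{thm:chosenCorrectly}: the gamma trick guarantees (with probability one) that the solution paths of $\boldsymbol{H}_\FP = 0$ are smooth for $t \in [0,1)$, and since the projective closure of these solution paths projects onto $\mathcal{Z}\cap\mathbb{M}_t$, membership of $\mathbb{M}_t$ in the Hurwitz variety $\cHu_\mathcal{Z}$ would contradict smoothness (using \cite[Theorem~1.6]{Peter2017}, which identifies $\cHu_\mathcal{Z}$ as exactly the locus where $\kappa_\mathcal{Z}(\cdot,\zv) = \infty$). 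The paper never touches $J$ or Theorem~\ref{theorem:JacZEV}.

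Your approach instead tries to make the transversality explicit via the full-rank condition on the stacked matrix $\left[\begin{smallmatrix} M_t \\ J\end{smallmatrix}\right]$, and to then argue that this full rank is a generic (polynomial) condition on the data $(L_1,\dots,L_k,R_1,\dots,R_k)$. The gap is in the phrase ``for fixed $t$ and fixed $J$'': $J$ in \eqref{eq:jacZ} is determined by the point $\zv \in \mathcal{Z}\cap\mathbb{M}_t$, but for $t\in[0,1)$ these intersection points are \emph{not} the MEP eigenpairs --- they move as $(L_i,R_i)$ vary. Consequently the determinant of $\left[\begin{smallmatrix} M_t \\ J\end{smallmatrix}\right]$ is not a polynomial in the defining data with $J$ held constant; $J$ is itself a function of that data via $\zv$, and your closing step, ``uniformly over the finitely many eigenpairs $\zv$; intersecting these finitely many probability-one events,'' quietly treats $\{\zv\}$ as a fixed finite set when it is not. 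Making your approach work would require arguing over an incidence variety $\{(\zv, \mathbb{M}) : \zv \in \mathcal{Z}\cap\mathbb{M}, \text{ non-transversal at } \zv\}$ and showing its projection to the Grassmannian (i.e.\ $\cHu_\mathcal{Z}$) is a proper subvariety whose preimage under $(L_i,R_i) \mapsto \mathbb{M}_t$ has measure zero --- which is precisely the global, non-pointwise argument that the paper's appeal to Theorem~\ref{thm:chosenCorrectly} and the Hurwitz variety bypasses. A secondary issue: you also need to justify, not just assert, that the relevant $\zv$ are smooth points of $\mathcal{Z}$; the paper gets this for free from path smoothness, while your deduction ``smoothness of $\mathcal{Z}$ at $\zv$ follows from the full-rank condition on $J$'' is stated without support (Theorem~\ref{theorem:JacZEV} assumes $\zv$ smooth; it does not establish it).
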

\begin{proof}
Since the Hurwitz variety $\cHu_\mathcal{Z}$ comprises projective linear subspaces $\mathbb{M}$ with $\kappa_\mathcal{Z}(\mathbb{M},\zv) = \infty$, it suffices to show that for any $t\in[0,1)$, $\mathbb{M}_t\notin\mathcal{H}_\mathcal{Z}$ with probability one.
By Theorem~\ref{thm:chosenCorrectly}, the fiber product homotopy
$\boldsymbol{H}_\FP \colon \mathbb{C}^{k^2}\times(\mathbb{P}^{n_1-1}\times\dots\times\mathbb{P}^{n_k-1})\times\mathbb{C}\to\mathbb{C}^{n_1+\dots +n_k}\times\mathbb{C}^{k(k-1)}$
has a start system chosen correctly with probability one.
Thus  $\boldsymbol{H}_\FP(\Lam_1,\dots,\Lam_k,\xv_1,\dots,\xv_k,t) =0$, $t\in[0,1)$, has smooth solution paths with probability one.

If  $\mathbb{M}_t\in\mathcal{H}_\mathcal{Z}$,
then these solution paths would not be smooth as the projective closure of the set of solutions to $\boldsymbol{H}_\FP(\Lam_1,\dots,\Lam_k,\xv_1,\dots,\xv_k,t) =0$ projects onto $\mathcal{Z}\cap\mathbb{M}_t$.
Thus, with probability one, $\kappa_\mathcal{Z}(\mathbb{M}_t,\zv)$ is finite for $t\in[0,1)$.
\end{proof}

For comparison,  $\kappa(\Lam,H_1,\dots,H_k) $, the condition number of a standard MEP \eqref{eq:mep1} as defined in \cite{HP2003}  captures how small perturbations in the inputs $A_{ij}$, $  i=1,\dots, k, j=0,\dots, k$, affect the  eigenvalue $\Lam = (\lambda_1,\dots,\lambda_k)$, i.e.,
\[
\begin{multlined}
\limsup\limits_{\varepsilon \rightarrow0}
\Bigl\{\|\Delta \Lam\|/\varepsilon
: ( A_{i0}+\Delta A_{i0}-\sum\nolimits_{j=1}^k(\lambda_j+\Delta \lambda_j)(A_{ij}+\Delta A_{ij})) (\xv_i+\Delta \xv_i)=0,\\
 \|\Delta A_{ij}\|\leq \varepsilon\|A_{ij}\|, i=1,\dots, k, j=0,\dots, k \Bigr\}
\end{multlined}
\]
where $(\xv_1,\dots,\xv_k)$ is the eigenvector, i.e., $H_i(\Lam) \xv_i =0$, $i =1,\dots,k$. We will also let $(\yv_1,\dots,\yv_k)$ be the  \emph{left} eigenvector, i.e., $H_i(\Lam)^\tp\yv_i =0$,  $i =1,\dots,k$.

Let  $\theta_i\coloneqq \|A_{i0}\|+\sum_{j=1}^k|\lambda_j|\|A_{ij}\|$, $i=1,\dots ,k$. The \emph{$\theta$-weighted norm},
 \[
\|M \|_\theta \coloneqq \max \{\|M \zv \|_2:\zv\in \mathbb{C}^k,\; |z_i|=\theta_i,\; i=1,\dots,k\},
\]
for any $M \in \mathbb{C}^{k \times k}$.
By \cite[Theorem~6]{HP2003}, $\kappa(\Lam,H_1,\dots,H_k)=\|M^{-1}\|_{\theta}$ where
\[
M \coloneqq \begin{bmatrix}
\yv_1^*A_{11}\xv_1 &\yv_1^*A_{12}\xv_1&\dots&\yv_1^*A_{1k}\xv_1\\
\yv_2^*A_{21}\xv_2 &\yv_2^*A_{22}\xv_2&\dots&\yv_2^*A_{2k}\xv_2\\
\vdots&\vdots&&\vdots\\
\yv_k^*A_{k1}\xv_k &\yv_k^*A_{k2}\xv_k&\dots&\yv_k^*A_{kk}\xv_k
\end{bmatrix}.
\]

We next show that the same Matheiu problem formulated as a fiber product MEP \eqref{eq:equivMEP} and a standard MEP \eqref{eq:mep1} can have vastly different condition numbers.
\begin{example}[Conditioning of Mathieu problem]\label{eg:cond1}
We generate an instance of the Mathieu problem in Seciton~\ref{sec:mathieu} with $n_1=18$ and $n_2=38$.
In Figure~\ref{kappa_Mathieu}, we plot the fiber product multiparameter eigenvalue problem condition number $\kappa_\FP(\Lam_1,\Lam_2,H_1,H_2) = \kappa_\mathcal{Z}(\mathbb{M}_1 ,\zv)$ and the standard multiparameter eigenvalue problem condition number $\kappa(\Lam,H_1,H_2)$,  against   $\Lam_1$ and $\Lam$  respectively in increasing norms.
\begin{figure}[htb]
\centering
\caption{The condition numbers (vertical axis, $\log_{10}$-scale) $\kappa_\FP(\Lam_1,\Lam_2,H_1,H_2)$ for fiber product homotopy (blue) and $\kappa(\Lam,H_1,H_2)$ for Delta method (red) plotted respectively against eigenvalues $\Lam_1$ and $\Lam$ (horizontal axis), ordered in increasing norm.}
\includegraphics[trim={0ex, 0ex, 0ex, 0ex}, clip,width=0.75\textwidth]{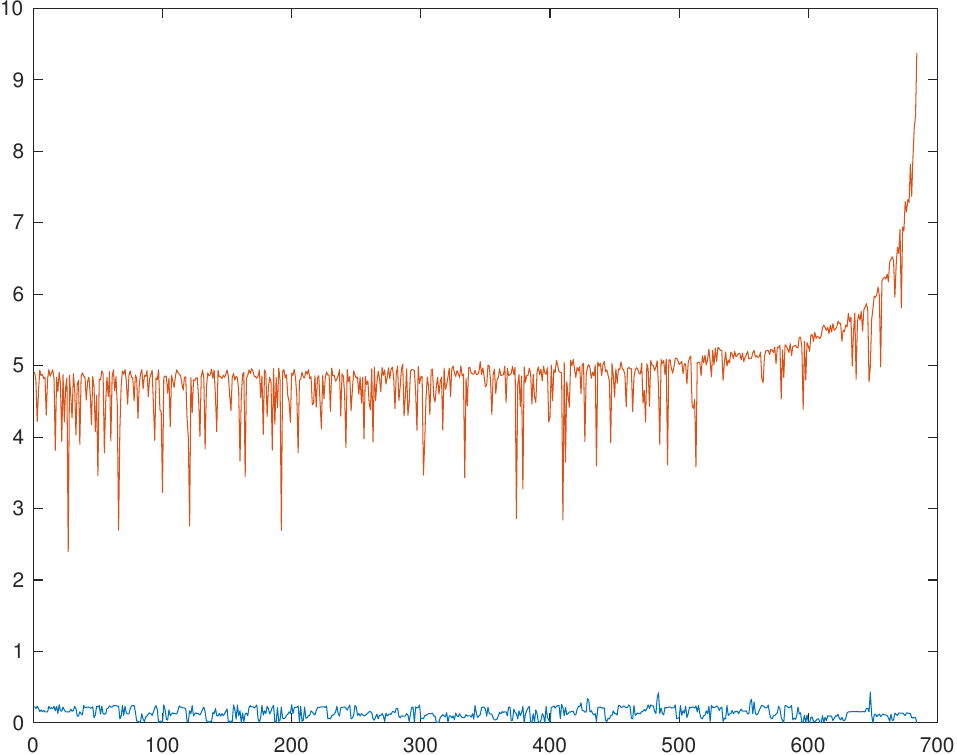}
\label{kappa_Mathieu}
\end{figure}

The difference is striking --- we emphasize that the vertical axis of the figure is in \blue{$\log_{10}$}-scale. The values of $\kappa_\FP(\Lam_1,\Lam_2,H_1,H_2) $ are all \blue{close to $1$ and certainly} less than \blue{$10$}. So small perturbations of the projective linear space $\mathbb{M}_1$ yield small changes in the points of intersection of $\mathcal{Z}$ with $\mathbb{M}_1$, which correspond to the eigenvalues we seek. On the other hand, the values of $\kappa(\Lam,H_1,H_2)$ are vastly larger, \blue{with a majority exceeding} $10^5$ and those near the right end of the plot corresponding to the largest eigenvalues are as large as \blue{$10^{9}$}.

Apart from $ \kappa_\mathcal{Z}(\mathbb{M}_1 ,\zv)$, we have also computed the intersection condition number $\kappa_\mathcal{Z}(\mathbb{M}_t,\zv)$ as $t$ varies from $0$ to $1$ in every path we tracked. We ran our implementation five times and found that $\kappa_\mathcal{Z}(\mathbb{M}_t ,\zv)$ does not exceed $21.85$ for every $t$ we encountered in every path and in every run.
\end{example}

\section{Conclusions}\label{sec:conclude}

The fiber product homotopy method solves an MEP by solving a mathematically equivalent problem, the fiber product MEP, via a homotopy algorithm  designed to exploit its structure. Our numerical experiments show that the fiber product homotopy method: (i) is much faster than diagonal coefficient homotopy method on all instances and is faster than the Delta method on large instances; (ii) is extremely accurate, producing relative backward errors on the order of $10^{-16}$,  especially in comparison with the $10^{-11}$ errors in the Delta method; (iii) is unique in that it maintains the same high degree of accuracy across a robust range of parameters --- irrespective of the dimensions of the matrices,  magnitudes of the eigenvalues, or  singularity of the MEP.

We proffer two insights to explain its strength:
(a) it deforms \blue{only linear} equations \blue{whereas} the diagonal coefficient homotopy method \blue{deforms nonlinear equations};
(b) the problem that it solves, the fiber product MEP, is much better conditioned than the equivalent standard MEP with the same solutions.

\section*{Acknowledgment}
We thank the anonymous reviewer for very pertinent comments and helpful suggestions.
The work in this article is generously supported by DARPA D15AP00109 and NSF IIS 1546413. LHL is supported by a DARPA Director's Fellowship and the Eckhardt Faculty Fund.

\end{document}